\newenvironment{enumroman}{\begin{enumerate}[\upshape (i)]}
                                                {\end{enumerate}}
\theoremstyle{plain}
\newtheorem{theorem}[equation]{Theorem}
\newtheorem{cor}[equation]{Corollary}
\newtheorem{prop}[equation]{Proposition}
\newtheorem{lemma}[equation]{Lemma}
\theoremstyle{definition}
\newtheorem{definition}[equation]{Definition}
\newtheorem{example}[equation]{Example}
\newtheorem{remark}[equation]{Remark}
\newtheorem{convention}[equation]{Convention}
\numberwithin{equation}{section}
\def \A {\mathcal {A}}
\def \B {\mathcal {B}}
\def \C {\mathcal {C}}
\def \D {\mathcal {D}}
\def \M {\mathcal {M}}
\def \P {\mathcal{P}}
\def \s {\mathcal{S}}
\def \V {\mathcal {V}}
\def \X {\mathcal{X}}
\newcommand{\colim}{\operatorname{colim}}
\newcommand{\Cyl}{\operatorname{Cyl}}
\newcommand{\Deltaop}{\Delta^{\op}}
\newcommand{\Fun}{\operatorname{Fun}}
\newcommand{\hf}{\operatorname{hf}}
\newcommand{\hocolim}{\operatorname{hocolim}}
\newcommand{\hofiber}{\operatorname{hofiber}}
\newcommand{\hocofiber}{\operatorname{hocofiber}}
\newcommand{\holim}{\operatorname{holim}}
\newcommand{\Hom}{\operatorname{Hom}}
\newcommand{\id}{\operatorname{id}}
\newcommand{\ifiber}{\operatorname{ifiber}}
\newcommand{\Map}{\operatorname{Map}}
\newcommand{\op}{\operatorname{op}}
\title{Cofibrantly generated model structures for functor calculus}
\author[Bandklayder]{Lauren Bandklayder}
\address{Boston Public Schools, 3200 Washington Street, Boston, MA, USA}
\email{lbandklayder@gmail.com}
\author[Bergner]{Julia E. Bergner}
\address{Department of Mathematics, University of Virginia, Charlottesville, VA, USA}
\email{jeb2md@virginia.edu}
\author[Griffiths]{Rhiannon Griffiths}
\address{Department of Mathematics, Cornell University, Ithaca, NY, USA}
\email{rhiannon.griffiths@cornell.edu}
\author[Johnson]{Brenda Johnson}
\address{Department of Mathematics, Union College, Schenectady, NY, USA}
\email{johnsonb@union.edu}
\author[Santhanam]{Rekha Santhanam}
\address{Department of Mathematics, IIT\ Bombay, Powai, Mumbai, India}
\email{reksan@iitb.ac.in}
\keywords{cofibrantly generated model categories, enriched categories, functor categories, functor calculus}
\subjclass[2020]{18D20; 18D15; 18F50; 18N40; 55U35}
\begin{document}

\begin{abstract}
    Model structures for many kinds of functor calculus can be obtained by applying a theorem of Bousfield to a suitable category of functors.  In this paper, we give a general criterion for when model categories obtained via this approach are cofibrantly generated.  Our examples recover the homotopy functor and $n$-excisive model structures of Biedermann and R\"ondigs, but also include a model structure for the discrete functor calculus of Bauer, Johnson, and McCarthy.
\end{abstract}

\maketitle

\section{Introduction}
Functor calculi have been used to produce significant results in a wide range of fields, beginning with applications of the homotopy functor calculus of Goodwillie \cite{goodwillie3} to algebraic $K$-theory \cite{mccarthy} and $v_n$-periodic homotopy theory \cite{am}, \cite{heuts}.  In another direction, the manifold calculus of Goodwillie and Weiss \cite{gw}, \cite{weissemb} and orthogonal calculus of Weiss \cite{weissorth} have been used to study embeddings and immersions of manifolds, as well as characteristic classes  \cite{rw}, and spaces of knots \cite{ltv}, \cite{sinha}.  
 
The essential data of a functor calculus is a means of approximating a functor $F$ with a tower of functors under $F$
\vspace{-2mm}
\begin{center}
\begin{tikzpicture}[node distance=2cm, auto]

\node (A) {$F$};
\node (E) [node distance=1.5cm, below of=A] {$P_nF$};
\node (D) [left of=E] {$...$};
\node (C) [left of=D] {$P_1F$};
\node (B) [left of=C] {$P_0F$};
\node (F) [right of=E] {$P_{n+1}F$};
\node (G) [right of=F] {$...$};

\draw[->] (C) to node {} (B);
\draw[->] (D) to node {} (C);
\draw[->] (E) to node {} (D);
\draw[->] (F) to node {} (E);
\draw[->] (G) to node {} (F);

\draw[->] (A) to node {} (B);
\draw[->] (A) to node {} (C);
\draw[->] (A) to node {} (D);
\draw[->] (A) to node {} (E);
\draw[->] (A) to node {} (F);
\draw[->] (A) to node {} (G);

\end{tikzpicture}
\end{center}
in a way analogous to the Taylor series for a function; indeed, we refer to them as \emph{Taylor towers}.  In particular, each $P_nF$ should be thought of as some kind of \emph{degree $n$ approximation} to $F$.  In fact, in a functor calculus, the natural transformations $p_nF \colon F\rightarrow P_nF$ induce weak equivalences $P_n(p_nF), \, p_n(P_nF) \colon P_nF\rightarrow P_n^2F$ which in turn can be used to show that $F\rightarrow P_nF$ is, in a categorical sense, the best degree $n$ approximation to $F$.

With a suitable model structure on a category of functors in place, the endofunctor $P_n$ that assigns to a functor its degree $n$ approximation can be used to build a new model structure in which $F\rightarrow G$ is a weak equivalence precisely when $P_nF\rightarrow P_nG$ is a weak equivalence in the original model structure. This approach has been employed by Barnes and Oman \cite{bo} in the case of the orthogonal calculus, and by Biedermann, Chorny, and R\"ondigs \cite{bcr}, \cite{br} and by Barnes and Eldred \cite{be1} in the case of the homotopy functor calculus. It has led to means by which different functor calculi can be extended to new contexts \cite{taggart} and compared to one another \cite{be2}, \cite{taggartcomp}, and provided the context in which to strengthen classifications of homogenous degree $n$ functors \cite{br}.  

One can also work in a more flexible setting than model categories, such as some well-behaved model for $(\infty,1)$-categories, which is the approach taken by Lurie \cite{lurie} and Pereira \cite{pereira}.  As their work demonstrates, one can develop a good general theory for functor calculus in this kind of setting as well, but we do not take that approach here, since we are particularly interested in the nuances of the model structures.  

The starting point for this paper was the desire to develop a similar approach for the discrete functor calculus of Bauer, McCarthy, and the fourth-named author \cite{bjm}.  Our hope is to use the model structure for discrete calculus developed here to recast existing comparisons between between the homotopy functor and discrete calculi, such as that found in \cite{bjm}, in model category-theoretic terms, and to develop a model category classification of homogeneous degree $n$ functors similar to the one for $n$-homogeneous functors in \cite{br}.   In addition, we have sought to define the model structure for discrete calculus in a manner that could be conveniently adapted for general classes of functor calculi. In particular, we would like for it to be applicable to those that can be defined using towers of comonads  in the same way as the discrete calculus does. A general approach to such calculi is being developed by the fourth author and Kathryn Hess.   To this end, we structure this paper around Theorems \ref{bousintro} and \ref{maintintro} below, and we show how to use them to build cofibrantly generated model structures corresponding to both the homotopy functor and discrete calculi.   

Model structures for abelian versions of the discrete functor calculus were established by Renaudin \cite{renaudin} and Richter \cite{richter}.   In general, the existence of such functor calculus model structures is guaranteed by a theorem of Bousfield and Friedlander \cite{bf}, which was modified by Bousfield for his work on telescopic homotopy theory \cite{bousfield}.   We use the following version of this theorem, derived from Bousfield's, in the present work; see Theorem \ref{BousfieldQ} and Corollary \ref{simplifiedA3}. The three versions differ slightly in the third axiom.  

\begin{theorem} \label{bousintro}  
Let $\M$ be a right proper model category together with an endofunctor $Q \colon \M \rightarrow \M$ and a natural transformation $\eta \colon \id \Rightarrow Q$ satisfying the following axioms:
\begin{itemize}
\item[(A1)] the endofunctor $Q$ preserves weak equivalences in $\M$;

\item[(A2)] the maps $\eta_{QX}, Q(\eta_X) \colon QX \rightarrow Q^2X$ are both weak equivalences in $\M$; and

\item[(A3')] $Q$ preserves homotopy pullback squares. 
\end{itemize}
Then there exists a right proper model structure, denoted by $\M_Q$, on the same underlying category as $\M$ with the same cofibrations as $\M$ and in which $X\rightarrow Y$ is a weak equivalence in $\M_Q$ when $QX\rightarrow QY$ is a weak equivalence in the original model structure on $\M$.  Furthermore, if $\M$ has the structure of a simplicial model category, then so does $\M_Q$.
\end{theorem}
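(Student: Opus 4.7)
The plan is to follow Bousfield's argument from \cite{bousfield}, modified as needed to work with axiom (A3') in place of its original formulation. I will declare $f$ to be a $Q$-equivalence if $Qf$ is a weak equivalence in $\M$, keep the cofibrations of $\M_Q$ equal to those of $\M$, and define fibrations by the right lifting property against acyclic $Q$-cofibrations. The pivotal auxiliary step is to characterize the fibrations concretely: $p \colon E \to B$ should turn out to be a fibration in $\M_Q$ if and only if it is a fibration in $\M$ and the $\eta$-naturality square
\begin{equation*}
\xymatrix{ E \ar[r]^{\eta_E} \ar[d]_p & QE \ar[d]^{Qp} \\ B \ar[r]_{\eta_B} & QB }
\end{equation*}
is a homotopy pullback in $\M$.

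The bookkeeping axioms are straightforward. Two-out-of-three and retract closure for $Q$-equivalences follow from functoriality of $Q$ together with the corresponding properties in $\M$. Axiom (A1) implies every weak equivalence of $\M$ is a $Q$-equivalence, so the trivial fibrations of $\M_Q$ coincide with those of $\M$; the (cofibration, trivial fibration) factorization and its associated lifting axiom are inherited directly.

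The bulk of the work is the (trivial cofibration, fibration) factorization. Given $f \colon X \to Y$, I would first construct the homotopy pullback $P$ of $Qf$ along $\eta_Y$, realized as a strict pullback of a fibration so that $P \to Y$ is an $\M$-fibration, together with the canonical map $X \to P$ over $Y$. Factoring $X \to P$ in $\M$ as a trivial cofibration $X \to W$ followed by a fibration $W \to P$ produces the candidate $(X \to W, W \to Y)$. The left factor is a $Q$-equivalence by (A1); the right factor is an $\M$-fibration by construction, and the nontrivial task is to show its $\eta$-square is a homotopy pullback. For this, I would apply $Q$ to the defining square of $P$ and invoke (A3') to obtain another homotopy pullback in $\M$, then use (A2) to identify $Q\eta_Y$ as a weak equivalence and extract the desired statement. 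Right properness of $\M$ enters throughout to ensure that the strict pullbacks in question compute homotopy pullbacks.

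The main obstacle I anticipate is precisely this verification that the fibration half of the factorization has the homotopy pullback property, since it requires juggling (A2), (A3'), and right properness simultaneously. Once the characterization of fibrations and the factorization are in hand, the remaining lifting problem against a $Q$-fibration reduces to an $\M$-lifting problem via the homotopy pullback characterization. Right properness of $\M_Q$ follows from right properness of $\M$ combined with (A3'). Finally, for the simplicial enrichment, since the cofibrations are unchanged only SM7 against a fibration in $\M_Q$ requires separate checking, and this reduces to SM7 in $\M$ together with the compatibility of $Q$ with cotensors by simplicial sets, which (A1) and (A3') together provide.
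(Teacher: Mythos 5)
There is a genuine gap at the crux of your argument, namely the (trivial cofibration, fibration) factorization. With $P$ the strict pullback of $Y \rightarrow QY \leftarrow (QX)'$ (where $(QX)' \rightarrow QY$ is a fibration replacing $Qf$), you factor $X \rightarrow P$ as an $\M$-trivial cofibration $X \rightarrow W$ followed by an $\M$-fibration $W \rightarrow P$ and claim that the $\eta$-naturality square of $W \rightarrow Y$ is a homotopy pullback. This is false in general, and no application of $Q$ to the defining square of $P$ together with (A2) and (A3') can prove it: since $X \rightarrow W$ is an $\M$-equivalence, (A1) gives $QW \simeq QX \simeq (QX)'$ over $QY$, so the homotopy pullback of $Y \rightarrow QY \leftarrow QW$ is weakly equivalent to $P$, and the comparison map from $W$ is, up to these identifications, your map $W \rightarrow P$; hence the $\eta$-square of $W \rightarrow Y$ is a homotopy pullback only when $X \rightarrow P$ is an $\M$-weak equivalence. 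Taking $Y$ terminal (with $Q(\ast)\simeq \ast$) and $X$ any object with $\eta_X$ not a weak equivalence, one gets $P \simeq QX$ and the claim would force $\eta_W$, hence $\eta_X$, to be a weak equivalence. What (A2), (A3') and right properness actually give (via Propositions \ref{properwe} and \ref{pullbackcomposite}) is that $X \rightarrow P$ is a $Q$-equivalence and that $P \rightarrow Y$ is an $\M$-fibration whose $\eta$-square is a homotopy pullback; the correct construction therefore factors $X \rightarrow P$ in the \emph{other} order, as a cofibration followed by an $\M$-trivial fibration (which is automatically a $Q$-fibration), and composes with $P \rightarrow Y$. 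As written, your $W \rightarrow P$ is merely an $\M$-fibration, not a $Q$-fibration, so the composite cannot be controlled.

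You should also be aware that the paper does not reprove the localization theorem at all: it quotes Bousfield's theorem (Theorem \ref{BousfieldQ}, with axiom (A3)) and proves only the implication (A1)+(A2)+(A3')+right properness $\Rightarrow$ (A3) in Corollary \ref{simplifiedA3} --- the square in (A3) is a homotopy pullback because one leg is a fibration and $\M$ is right proper, (A3') makes its image under $Q$ a homotopy pullback, and Proposition \ref{properwe} then shows $Qf$ is a weak equivalence. If you repair the factorization as above you are essentially redoing Bousfield's proof, which is legitimate but much longer, and two further points in your sketch would still need honest arguments: the identification of $Q$-trivial fibrations with $\M$-trivial fibrations is not a formal consequence of (A1) alone (it uses the $Q$-fibration characterization of Proposition \ref{Qfibrations} and right properness), and the simplicial axiom does not follow from any ``compatibility of $Q$ with cotensors,'' since $Q$ is not assumed to interact with the simplicial structure; the standard argument instead combines (SM7) in $\M$ with the $Q$-fibration characterization and the fact that cotensoring with a simplicial set preserves the relevant homotopy pullback squares.
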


A further version of this result by Stanculescu \cite{stanculescu} drops the right properness assumption, but since we have found this condition essential to our arguments here, we retain it.   We also note that the second axiom requires precisely the property described above that guarantees that $P_nF$ is, in a categorical sense, the best degree $n$ approximation to the functor $F$. In this sense, Bousfield and Friedlander's theorem and its variants seem tailor-made for constructing model structures for functor calculi. Localizations that are produced via these theorems are sometimes referred to as \emph{Bousfield-Friedlander localizations}, and the endofunctors used to produce them are examples of what are often referred to in the literature as \emph{Quillen idempotent monads}.  For specificity here we use the terminology \emph{Bousfield endofunctor} to emphasize that we are assuming the axioms of Theorem \ref{bousintro}.  We review Bousfield's version of the localization theorem and deduce Theorem \ref{bousintro} from it in Section \ref{s:QTheorem}, then apply it to obtain the $\hf$- and $n$-excisive model structures of \cite{br} in Sections \ref{fcexamples} and \ref{n-excisive model}, and the desired discrete degree $n$ model structure in Section \ref{Degreenmodel}.  

However, applying Theorem \ref{bousintro} does not immediately give the additional structure of a cofibrantly generated model category.  Facing the same challenge, Biedermann and R\"ondigs \cite{br} develop a simplicially enriched version of Goodwillie's homotopy functor calculus in such a way that their model structure for $n$-excisive functors is cofibrantly generated.  Because this additional structure is quite powerful, we want to employ a similar strategy to develop a degree $n$ model structure for discrete functor calculus.

Indeed, we develop a systematic way to show when model structures obtained from this theorem are cofibrantly generated, via criteria for when arguments of the kind used by Biedermann and R\"ondigs can be applied.  As a result, we are able to present not only the new example of discrete functor calculus, but also to show that the homotopy functor and $n$-excisive model structures arise  as consequences of a general theorem.  Our hope is that, together with the presentation of each of these three model structures, this general result will shed light on the shared features across these different model structures and the types of technical details that must be checked in specific examples.  With this goal in mind, even for the model structures that were already known, we emphasize some of the details in the arguments.  At the same time, the general theorem should enable us quickly to establish that the model structures arising from the functor calculi in the forthcoming work of the fourth-named author and Hess, which are constructed using methods similar to those used for the discrete calculus, are also cofibrantly generated.

In particular, one feature of Biedermann and R\"ondigs' cofibrant generation proofs is that they rely on the construction of equivalent formulations of functors such as $P_n$ in terms of representables.  Although our proofs do rely on their methods, we are able to circumvent the need for these replacements and use their original formulations instead.  In our approach, these representable functors appear in the form of what we call \emph{test morphisms} for a Bousfield endofunctor in the following result, which is restated more precisely as Theorem \ref{finalcofib} below.  

\begin{theorem} \label{maintintro} 
    Suppose that $\Fun(\C,\D)$  is a cofibrantly generated right proper model structure on a category of simplicial functors in which all fibrations are also levelwise fibrations, with some modest additional assumptions on $\C$ and $\D$.  If $Q$ is a Bousfield endofunctor of $\Fun(\C,\D)$ that admits a collection of test morphisms, then the model structure on $\Fun(\C,\D)_Q$ is cofibrantly generated.
\end{theorem}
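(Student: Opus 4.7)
The approach will be to invoke the standard recognition theorem for cofibrantly generated model categories. Since by Theorem \ref{bousintro} the localized model structure $\M_Q := \Fun(\C,\D)_Q$ has the same cofibrations as $\M := \Fun(\C,\D)$, and hence the same trivial fibrations, I retain the generating cofibrations $I$ of $\M$ as the generating cofibrations of $\M_Q$. The real content is to produce a set $J_Q$ of generating trivial cofibrations whose domains are small relative to $J_Q$-cell, whose cell complexes are cofibrations and $Q$-equivalences, and such that the $J_Q$-injectives are exactly the fibrations of $\M_Q$.

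Following the Biedermann--R\"ondigs template, $J_Q$ will be built as the union of the generating trivial cofibrations $J$ of $\M$ together with a set manufactured from the test morphisms of $Q$. Concretely, each test morphism $t$ is first factored through its simplicial mapping cylinder to replace it by a cofibration $\tilde t$ that is still a $Q$-equivalence, and then one forms pushout-products $\tilde t \Box (\partial\Delta[n]\hookrightarrow\Delta[n])$ to obtain a set of cofibrations in $\M$. Smallness of the domains is inherited from $\M$ together with the modest assumptions on $\C$ and $\D$, and each element of $J_Q$ is a cofibration by construction.

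The first verification is that every $J_Q$-cell complex is both a cofibration and a $Q$-equivalence. The cofibration part is routine closure under pushout and transfinite composition. For $Q$-equivalence, each generator is a $Q$-equivalence (the $J$-part by axiom (A1), the test-morphism part by the defining property of test morphisms), and preservation of $Q$-equivalences under pushout along cofibrations is exactly the content of the right-properness-plus-(A3') combination used in Bousfield's localization theorem, while preservation under transfinite composition follows from a standard telescope and homotopy-pullback argument together with (A1).

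The main obstacle will be the third step, identifying $J_Q$-injectives with fibrations of $\M_Q$. For the easy direction, any fibration of $\M_Q$ is in particular a fibration of $\M$ (hence $J$-injective) and satisfies the homotopy pullback condition built into the $\M_Q$-fibrations, which the test-morphism portion of $J_Q$ is designed to detect. The converse — showing that a map with RLP against $J_Q$ is an $\M_Q$-fibration — requires translating the enriched lifting problem against $\tilde t \Box (\partial\Delta[n]\hookrightarrow\Delta[n])$ into the homotopy pullback condition characterizing $Q$-fibrations in the Bousfield--Friedlander framework. This is where the hypothesis that every fibration of $\M$ is levelwise becomes indispensable, since it allows mapping spaces and homotopy pullbacks to be computed objectwise, so that the enriched lifting property is genuinely equivalent to the homotopy-pullback condition tested by the original test morphisms. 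Once this equivalence is in hand, an $I$-injective is a trivial fibration of $\M$ and hence of $\M_Q$; conversely, any $Q$-trivial fibration coincides with a trivial fibration of $\M$ by two-out-of-three applied to the homotopy pullback square guaranteed by (A3'), completing the hypotheses of the recognition theorem.
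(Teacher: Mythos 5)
Your overall skeleton is the paper's: keep $I$, enlarge $J$ by a set of cofibrations manufactured from cylinder factorizations of the test morphisms, show that the right lifting property against these detects the homotopy pullback condition of Proposition \ref{Qfibrations}, and check smallness. However, your candidate generators are not the right objects: the test morphisms live in $\Fun(\C,\s)$ (Definition \ref{testdefn}), and a pushout-product $\tilde t \Box (\partial\Delta[n]\hookrightarrow\Delta[n])$ of a map of $\Fun(\C,\s)$ with a map of simplicial sets again lies in $\Fun(\C,\s)$, so unless $\D=\s$ these are not morphisms of $\Fun(\C,\D)$ at all and cannot serve as generating acyclic cofibrations there. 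What the argument requires is the two-variable pairing $X\otimes D$ of Proposition \ref{FunAdjunctions}: the correct set is $J_Q=\{\, i\,\Box\,\zeta_\alpha \,\}$ where $i\colon C\to D$ runs over the generating cofibrations $I_\D$ of $\D$ and $\zeta_\alpha\colon \widehat X\to\Cyl(\alpha)$ is the cylinder factorization taken \emph{after} replacing the domain $X$ of the test morphism by a cofibrant $\widehat X$. You omit that cofibrant replacement, but Proposition \ref{cylinder} needs a cofibrant domain, and cofibrancy of $\widehat X$ is exactly what later makes $F^{\widehat X}\to G^{\widehat X}$ a fibration.

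The heart of the proof---that lifting against these pushout-products is equivalent to the square comparing $F\to G$ with $QF\to QG$ being a homotopy pullback---is asserted rather than argued, and your stated reason for needing fibrations to be levelwise (``mapping spaces and homotopy pullbacks computed objectwise'') is not the actual mechanism. The mechanism is: the test-morphism property converts the $\eta$-square into evaluated-cotensor squares; after the cofibrant and cylinder replacements, Lemma \ref{pullbackgen} (the SM7-type statement for the evaluated cotensor, which is where ``fibrations are projective fibrations'' is used) shows the corner map $F^{\Cyl(\alpha)}\to F^{\widehat X}\times_{G^{\widehat X}}G^{\Cyl(\alpha)}$ is already a fibration, so the square is a homotopy pullback iff this map is a weak equivalence, iff it has the RLP against $I_\D$, which the adjunction of Proposition \ref{FunAdjunctions} transposes into lifting against $i\,\Box\,\zeta_\alpha$. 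Two further points: the detour through the recognition theorem is unnecessary and your justification for it is incorrect---$Q$-equivalences are not in general preserved by pushout along cofibrations (that is a left-properness-type statement, not a consequence of right properness plus (A3')); since $\Fun(\C,\D)_Q$ already exists by Theorem \ref{BousfieldQ}, once $(J\cup J_Q)$-injectives are identified with the $Q$-fibrations, the maps of $J_Q$ have the LLP against all $Q$-fibrations and are therefore acyclic cofibrations, with cell complexes handled automatically. Finally, smallness is not simply ``inherited'': the paper needs Convention \ref{nexcisiveconv} (domains and codomains of $I_\D$ finite relative to it) together with the tensor--cotensor adjunction to factor a map out of $C\otimes\Cyl(\alpha)\amalg_{C\otimes\widehat X}D\otimes\widehat X$ through a finite stage of a transfinite composition.
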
  

In practice, test morphisms represent maps in $\C$ that, for any functor $F$ in $\Fun(\C,\D)$, are converted to weak equivalences by $QF$.  After proving this theorem, we identify test morphisms for the $\hf$-, $n$-excisive, and degree $n$ model structures induced by Bousfield's theorem to prove that they are cofibrantly generated.  

\subsection{Outline of the paper}

In Section \ref{s:smcbackground} we review properties of  right proper  and simplicial model categories that will be used in subsequent sections, and  we look more specifically at simplicial model categories of functors, which are the examples of interest in this paper.  In Section \ref{s:fcbackground} we summarize both the homotopy functor calculus of Goodwillie and the discrete functor calculus of Bauer, Johnson, and McCarthy.  We present the localization techniques for model categories that we use in Section \ref{s:QTheorem}, and we apply them to get model structures for homotopy functors, $n$-excisive functors, and degree $n$ functors in Sections \ref{fcexamples}, \ref{n-excisive model}, and \ref{Degreenmodel}, respectively.  We present our main theorem for cofibrant generation in Section \ref{s:cofibrant} and include the homotopy functor model structure there as a guiding example.  We then apply the main theorem to the $n$-excisive model structure in Section \ref{s:nexccof} and to the degree $n$ model structure in Section \ref{s:degreencof}.

\subsection{Acknowledgements}

This work was done as part of the Women in Topology Workshop in August 2019, supported by the Hausdorff Research Institute for Mathematics, NSF grant DMS-1901795, the AWM ADVANCE grant NSF-HRD-1500481, and Foundation Compositio Mathematica.  JB was partially supported by NSF grants DMS-1659931 and DMS-1906281. BJ was partially supported by the Union College Faculty Research Fund.

We would like to thank the anonymous referee for several suggestions for the improvement of the paper.

\section{Background on simplicial model categories} \label{s:smcbackground} \label{s:simplicialfunctor}

In this section, we  review some key facts about model categories that we need, focusing on some features of right proper and simplicial model categories.  Of particular importance are some results about simplicial model categories that are difficult to find in the literature.

We begin with a number of properties of homotopy pullbacks in right proper model categories that we use throughout this paper, stated here for ease of reference. The first provides some standard means of constructing and comparing homotopy pullbacks.  

\begin{prop} \cite[13.3.4, 13.3.8]{hirschhorn} \label{homotopyinvariancepullback} \label{rightproperpullback}
Let $\M$ be a right proper model category, and let 
\vspace{-2mm}
\[ \begin{tikzpicture}[node distance=1.7cm, auto]

\node (A) {$X$};
\node (B) [right of=A] {$Z$};
\node (E) [right of=B] {$Y$};
\node (C) [node distance=1.4cm, below of=A] {$X'$};
\node (D) [right of=C] {$Z'$};
\node (F) [right of=D] {$Y'$};

\draw[->] (A) to node {} (B);
\draw[->] (E) to node {} (B);
\draw[->] (C) to node {} (D);
\draw[->] (F) to node {} (D);
\draw[->] (A) to node [swap] {$\simeq$} (C);
\draw[->] (B) to node {$\simeq$} (D);
\draw[->] (E) to node {$\simeq$} (F);

\end{tikzpicture} \] 
be a diagram in $\M$ where the vertical maps are all weak equivalences. The induced map between the homotopy pullback of the diagram $X \rightarrow Z \leftarrow Y$ and the homotopy pullback of the diagram $X' \rightarrow Z' \leftarrow Y'$ is a weak equivalence. Moreover, if, in any diagram $X\rightarrow Z\leftarrow Y$ in $\M$, at least one of the maps is a fibration, then the induced map from the ordinary pullback of the diagram to the homotopy pullback is a weak equivalence.
\end{prop}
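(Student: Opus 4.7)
The plan is to deduce both assertions from the two key properties available in a right proper model category: the functorial factorization of any map as a trivial cofibration followed by a fibration, and the fact that the pullback of a weak equivalence along a fibration is a weak equivalence.

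I would handle the ``moreover'' clause first, since the first statement will invoke it. Assume without loss of generality that $X\to Z$ is a fibration, and factor $Y\to Z$ as a trivial cofibration $Y\xrightarrow{\sim}Y'$ followed by a fibration $Y'\twoheadrightarrow Z$; by definition $X\times_Z Y'$ is then a model for the homotopy pullback. The canonical comparison $X\times_Z Y\to X\times_Z Y'$ can be identified with the pullback of the weak equivalence $Y\xrightarrow{\sim}Y'$ along the projection $X\times_Z Y'\to Y'$, and this projection is a fibration because it is the pullback of the fibration $X\to Z$ along $Y'\to Z$. Right properness then gives the conclusion.

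For the first statement, I would begin by applying a functorial factorization to each horizontal map so that every leg of each cospan becomes a fibration, producing cospans $\tilde X\twoheadrightarrow Z\twoheadleftarrow\tilde Y$ and $\tilde X'\twoheadrightarrow Z'\twoheadleftarrow\tilde Y'$ together with induced vertical weak equivalences. By the ``moreover'' clause, the ordinary pullbacks $\tilde X\times_Z \tilde Y$ and $\tilde X'\times_{Z'}\tilde Y'$ now represent the homotopy pullbacks, so it suffices to show that the induced map between these is a weak equivalence. I would do this in stages. Set $\tilde X'':=\tilde X'\times_{Z'}Z$ and $\tilde Y'':=\tilde Y'\times_{Z'}Z$; right properness applied to $Z\xrightarrow{\sim}Z'$ pulled back along the fibrations to $Z'$ makes the projections $\tilde X''\to\tilde X'$ and $\tilde Y''\to\tilde Y'$ weak equivalences, and two-out-of-three then promotes $\tilde X\to\tilde X''$ and $\tilde Y\to\tilde Y''$ to weak equivalences over $Z$. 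Two further invocations of right properness chain the pullbacks together: first pull $\tilde X\to\tilde X''$ back along the fibration $\tilde X''\times_Z \tilde Y\to\tilde X''$, then pull $\tilde Y\to\tilde Y''$ back along the fibration $\tilde X''\times_Z \tilde Y''\to\tilde Y''$, and finally identify $\tilde X''\times_Z\tilde Y''$ with $\tilde X'\times_{Z'}\tilde Y'$ via iterated-pullback associativity.

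I expect the main obstacle to be purely organizational: at each stage one must verify that the map being pulled back is genuinely a weak equivalence and that the map one pulls back along is genuinely a fibration, so that right properness legitimately applies. Once the intermediate pullbacks are named and the functoriality of the factorizations is exploited, the argument reduces to a bookkeeping exercise with no further ideas needed.
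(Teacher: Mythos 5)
The paper gives no proof of this proposition at all: it is quoted from Hirschhorn [13.3.4, 13.3.8], and your argument is essentially the standard one from that source (right properness plus two-out-of-three applied to staged pullbacks). The strategy is the right one, but two steps are not correct as written. First, in the ``moreover'' clause, the assertion that $X\times_Z Y'$ is ``by definition'' a model for the homotopy pullback hides real content: under the usual definition the homotopy pullback of $X\rightarrow Z\leftarrow Y$ is $\tilde X\times_Z\tilde Y$ with \emph{both} legs factored, and the fact that the fibration leg may be left unfactored (equivalently, that the construction is independent of the chosen factorizations) is itself a lemma, normally deduced from the homotopy-invariance statement that you prove \emph{afterwards} using the ``moreover'' clause. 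As ordered, the argument is circular or incomplete at this point; it can be repaired either by reordering (prove the invariance statement first, purely from the staged argument, then deduce independence of factorization and the ``moreover'' clause), or by inserting the one-line comparison that the map $X\times_Z\tilde Y\rightarrow\tilde X\times_Z\tilde Y$ is the pullback of the weak equivalence $X\rightarrow\tilde X$ along the fibration $\tilde X\times_Z\tilde Y\rightarrow\tilde X$. For the same reason, invoking the ``moreover'' clause to say that $\tilde X\times_Z\tilde Y$ ``represents'' the homotopy pullback of the original cospan is off target: that clause compares the ordinary pullback of a cospan with the homotopy pullback of the \emph{same} cospan, not of the unfactored one.

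Second, the final step of the staged argument fails as stated: $\tilde X''\times_Z\tilde Y''=(\tilde X'\times_{Z'}Z)\times_Z(\tilde Y'\times_{Z'}Z)$ is \emph{not} isomorphic to $\tilde X'\times_{Z'}\tilde Y'$; iterated-pullback associativity gives instead $\tilde X''\times_Z\tilde Y''\cong(\tilde X'\times_{Z'}\tilde Y')\times_{Z'}Z$, which still remembers the $Z$-coordinate. You need a third application of right properness here: $\tilde X'\times_{Z'}\tilde Y'\rightarrow Z'$ is a fibration, so pulling the weak equivalence $Z\rightarrow Z'$ back along it shows that the projection $(\tilde X'\times_{Z'}\tilde Y')\times_{Z'}Z\rightarrow\tilde X'\times_{Z'}\tilde Y'$ is a weak equivalence. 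The two intermediate right-properness steps (pulling $\tilde X\rightarrow\tilde X''$ back along $\tilde X''\times_Z\tilde Y\rightarrow\tilde X''$ and $\tilde Y\rightarrow\tilde Y''$ back along $\tilde X''\times_Z\tilde Y''\rightarrow\tilde Y''$) are fine. With the extra application and the logical order straightened out, your proof goes through and coincides with Hirschhorn's.
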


The next result is a kind of ``two-out-of-three property" for homotopy pullbacks.

\begin{prop} \cite[13.3.15]{hirschhorn} \label{pullbackcomposite} 
Let $\M$ be a right proper model category.  If the right-hand square in the diagram
\vspace{-2mm}
\[ \begin{tikzpicture}[node distance=1.7cm, auto]

\node (A) {$X$};
\node (B) [right of=A] {$Y$};
\node (E) [right of=B] {$Z$};
\node (C) [node distance=1.4cm, below of=A] {$U$};
\node (D) [right of=C] {$V$};
\node (F) [right of=D] {$W$};

\draw[->] (A) to node {} (B);
\draw[->] (B) to node {} (E);
\draw[->] (C) to node {} (D);
\draw[->] (D) to node {} (F);
\draw[->] (A) to node {} (C);
\draw[->] (B) to node {} (D);
\draw[->] (E) to node {} (F);

\end{tikzpicture} \] 
is a homotopy pullback square, then the left-hand square is a homotopy pullback if and only if the composite square is a homotopy pullback.
\end{prop}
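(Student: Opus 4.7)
The plan is to reduce the statement to the classical pasting law for ordinary pullbacks by fibrantly replacing the right vertical leg $Z \to W$ once and reusing that replacement to model all three homotopy pullbacks in play. The key input is Proposition \ref{homotopyinvariancepullback}: right properness lets us compute a homotopy pullback as an ordinary pullback as soon as one leg of the cospan is a fibration, and it lets us swap any leg for a weakly equivalent one without changing the homotopy pullback up to weak equivalence.

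Concretely, I would factor $Z \to W$ as a trivial cofibration $Z \xrightarrow{\simeq} \tilde Z$ followed by a fibration $\tilde Z \twoheadrightarrow W$. Then the homotopy pullback of the right cospan $Y \to W \leftarrow V$ is modeled by $P := \tilde Z \times_W V$, so the hypothesis that the right square is a homotopy pullback becomes the statement that the canonical map $Y \to P$ is a weak equivalence. Likewise, the homotopy pullback of the composite cospan $X \to W \leftarrow U$ is modeled by $Q := \tilde Z \times_W U$, so the composite square is a homotopy pullback precisely when the canonical map $X \to Q$ is a weak equivalence. Since fibrations are stable under pullback, $P \to V$ is a fibration, and combining $Y \xrightarrow{\simeq} P$ with Proposition \ref{homotopyinvariancepullback} once more shows that the homotopy pullback of the left cospan $Y \to V \leftarrow U$ is modeled by $U \times_V P$. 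The pasting law for ordinary pullbacks then supplies a canonical isomorphism
\[ U \times_V P \;=\; U \times_V (\tilde Z \times_W V) \;\cong\; \tilde Z \times_W U \;=\; Q, \]
under which the two comparison maps out of $X$ agree, and the equivalence of the three conditions follows.

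The step I expect to demand the most care is verifying that this identification is genuinely natural in $X$, that is, that the map $X \to U \times_V P$ coming from the universal property of the left cospan coincides with the map $X \to Q$ coming from the composite cospan. This is a universal-property chase rather than a deep technical point, but it is the place where the argument could silently break if the factorization were chosen or compared carelessly. Once that identification is in hand, everything else is a direct application of right properness, the stability of fibrations under pullback, and the two-out-of-three property for weak equivalences.
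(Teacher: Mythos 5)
Your argument is correct. The paper gives no proof of this proposition --- it is quoted directly from Hirschhorn [13.3.15] --- and your argument is essentially the standard one underlying that citation: replace $Z\to W$ by a fibration $\tilde Z\twoheadrightarrow W$, use right properness (via Proposition \ref{homotopyinvariancepullback}) to model the homotopy pullbacks of the right, composite, and left cospans by the strict pullbacks $\tilde Z\times_W V$, $\tilde Z\times_W U$, and $U\times_V P$ respectively, and then invoke the pasting law for strict pullbacks together with compatibility of the comparison maps out of $X$. Two minor remarks: your labels for the cospans contain slips (you write ``$Y\to W\leftarrow V$'' and ``$X\to W\leftarrow U$'' where the cospans of the right and composite squares are $Z\to W\leftarrow V$ and $Z\to W\leftarrow U$, the maps from $Y$ and $X$ being the comparison maps), though your definitions of $P$ and $Q$ make the intended meaning unambiguous; and note that modeling the left square's homotopy pullback by $U\times_V P$ uses the standing hypothesis that the right square is a homotopy pullback (so that $Y\xrightarrow{\simeq}P$), which is legitimate since that hypothesis is in force in both directions of the ``if and only if.'' The universal-property check you flag is indeed routine and goes through as you describe.
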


Finally, the following result gives a useful criterion for identifying homotopy pullback squares together with a nice property of homotopy pullback squares.  It is proved in \cite[3.3.11(1ab)]{mv} for the category of topological spaces, but the argument works for any right proper model category. 

\begin{prop} \label{properwe}
Consider a commutative square in a right proper model category $\M$: 
\begin{center}
\begin{tikzpicture}[node distance=1.7cm, scale=1]

\node (A) {$X$};
\node (B) [right of=A] {$Y$};
\node (C) [node distance=1.4cm, below of=A] {$U$};
\node (D) [right of=C] {$V$.};

\draw[->] (A) to node {} (B);
\draw[->] (A) to node [swap]{} (C);
\draw[->] (B) to node {} (D);
\draw[->] (C) to node [swap] {} (D);

\end{tikzpicture} 
\end{center}
\begin{enumerate}
    \item \label{properwea} 
    If the square is a homotopy pullback square and $Y\rightarrow V$ is a weak equivalence, then $X\rightarrow U$ is a weak equivalence.
    
    \item \label{properweb} 
    If both vertical arrows in the square are weak equivalences, then the square is a homotopy pullback square.
\end{enumerate}
\end{prop}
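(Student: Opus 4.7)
My plan is to reduce both parts to the same computation of the homotopy pullback as an honest pullback, using the ``moreover'' half of Proposition~\ref{homotopyinvariancepullback}. Concretely, factor the map $Y\to V$ as $Y \xrightarrow{\simeq} Y' \twoheadrightarrow V$, a trivial cofibration followed by a fibration. Since $Y' \to V$ is a fibration, the ordinary pullback $U\times_V Y'$ represents the homotopy pullback of the diagram $U\to V\leftarrow Y$. The universal property of this pullback provides a canonical map $\varphi\colon X\to U\times_V Y'$ coming from $X\to U$ together with $X\to Y\xrightarrow{\simeq} Y'$. By definition, the original square is a homotopy pullback if and only if $\varphi$ is a weak equivalence.

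For part (\ref{properwea}), assume the square is a homotopy pullback (so $\varphi$ is a weak equivalence) and that $Y\to V$ is a weak equivalence. Then $Y'\to V$ is simultaneously a fibration and a weak equivalence, hence a trivial fibration. Trivial fibrations are stable under pullback in any model category, so $U\times_V Y' \to U$ is a trivial fibration, in particular a weak equivalence. The commutative triangle formed by $\varphi$, the projection $U\times_V Y'\to U$, and the map $X\to U$ (which follows from the universal property) then lets us conclude by the two-out-of-three property that $X\to U$ is a weak equivalence.

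For part (\ref{properweb}), assume that both $X\to U$ and $Y\to V$ are weak equivalences. As in the previous paragraph, $Y'\to V$ is a trivial fibration and hence so is its pullback $U\times_V Y'\to U$. Applying two-out-of-three to the same triangle above, but this time with $X\to U$ and $U\times_V Y'\to U$ known to be weak equivalences, shows $\varphi$ is a weak equivalence; therefore the square is a homotopy pullback.

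The only step that genuinely uses right properness is the identification of $U\times_V Y'$ with the homotopy pullback via Proposition~\ref{homotopyinvariancepullback}, since without right properness one would need some cofibrancy hypothesis on $U$ to guarantee that pulling back along the fibration $Y'\to V$ preserves weak equivalences in the comparison with a fibrant replacement model. I expect no real obstacle beyond being careful that the factorization used to compute the homotopy pullback is actually compatible with the universal map out of $X$; once that bookkeeping is in place, both conclusions follow by a single application of two-out-of-three in each case.
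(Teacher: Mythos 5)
Your argument is correct. Note, though, that the paper does not actually write out a proof of this proposition: it simply cites Munson--Voli\'c's space-level argument \cite[3.3.11]{mv} and remarks that it carries over to any right proper model category. Your route is a self-contained model-categorical one: you factor only the leg $Y\to V$ as a weak equivalence followed by a fibration, use the ``moreover'' clause of Proposition \ref{homotopyinvariancepullback} (together with homotopy invariance) to identify $U\times_V Y'$ with the homotopy pullback, and then finish both parts with stability of acyclic fibrations under pullback plus two-out-of-three. This is essentially the standard characterization of homotopy pullback squares via a one-sided factorization (Hirschhorn 13.3.14), and your use of it is the one mild gloss on ``by definition'': the original square is a homotopy pullback when the map from $X$ to the two-sided-factorization model is a weak equivalence, so one needs the comparison equivalences of Proposition \ref{homotopyinvariancepullback} to be compatible with the canonical maps out of $X$ --- which they are by naturality, as you flag. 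You also correctly isolate right properness as entering only through that identification. So the proposal buys an explicit, reference-free proof at the cost of this small naturality bookkeeping, whereas the paper buys brevity by outsourcing the argument.
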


The model categories we consider in the paper are simplicially enriched model categories. We recall the definition here, with the axioms labelled according to the usual convention. A more detailed overview of simplicially enriched model categories can be found in the prequel to this paper \cite{proc}. 

\begin{definition}\label{SimpModelCat} \cite[9.1.6]{hirschhorn}
A \emph{simplicial model category} is a model category $\M$ that is also a simplicial category, i.e., a category enriched in the closed monoidal category $\s$ of simplicial sets, such that the following two axioms hold.
\begin{itemize}
    \item[(SM6)] The category $\M$ is both tensored and cotensored over $\s$. In particular, there are natural isomorphisms
    \[ \Map(X \otimes K, Y) \cong \Map(X,Y)^K \cong \Map(X, Y^K). \]
    for each pair of objects $X$ and $Y$ in $\M$ and simplicial set $K$.    
   
    \item[(SM7)] If $i \colon K \rightarrow L$ is a cofibration of simplicial sets and $p \colon X \rightarrow Y$ is a fibration in $\M$, then the induced morphism 
    \[ X^L \rightarrow X^K \times_{Y^K} Y^L \]
   is a fibration in $\D$ that is a weak equivalence if either $i$ or $p$ is.
\end{itemize}
\end{definition}
\begin{remark}\cite[9.3.7]{hirschhorn}\label{tensorcotensorrmk} Assuming axiom (SM6) holds, axiom (SM7)  is equivalent to 
\begin{itemize}
    \item [(SM7')]
If $i \colon K \rightarrow L$ is a cofibration of simplicial sets and $j \colon X \rightarrow Y$ is a cofibration in $\M$, then the induced morphism 
\[ X \otimes L \amalg_{X \otimes K} Y \otimes K \rightarrow Y \otimes L \]
is a cofibration in $\D$ that is a weak equivalence if either $i$ or $j$ is.
\end{itemize}
\end{remark}

\begin{example} \cite[9.1.13]{hirschhorn}
The model category $\s$ of simplicial sets  can be regarded as a simplicial model category.  Given any simplicial sets $X$ and $K$, we can take the tensor structure $X \otimes K$ to be the cartesian product $X \times K$ and the cotensor structure $X^K$ to be $\Map(K,X)$. 
\end{example}

We recall a result that is well-known in the model category of spaces, but holds in any simplicial model category; see, for example \cite[\S2.4]{dro}.

\begin{prop} \label{cylinder} 
Let $\M$ be a simplicial model category and let $f \colon A \to B$ be a morphism in $\M$ with $A$ cofibrant. Then there exists a factorization $f=g i_f$ of $f$ where $i_f$ is a cofibration and $g$ is a simplicial homotopy equivalence.
\end{prop}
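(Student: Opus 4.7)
The plan is to carry out the classical mapping cylinder construction using the simplicial tensor. Form the pushout in $\M$
\[
\Cyl(f) := B \sqcup_{A \otimes \{1\}} (A \otimes \Delta^1),
\]
where the map $A \cong A \otimes \{1\} \to A \otimes \Delta^1$ is the vertex inclusion $i_1$ and $A \to B$ is $f$. Let $i_f \colon A \to \Cyl(f)$ denote the composite $A \xrightarrow{i_0} A \otimes \Delta^1 \to \Cyl(f)$, and let $g \colon \Cyl(f) \to B$ be the morphism induced by $\id_B$ on $B$ and by $A \otimes \Delta^1 \xrightarrow{\pi} A \xrightarrow{f} B$ on the cylinder factor, where $\pi$ comes from the unique map $\Delta^1 \to \Delta^0$. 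Then $g \circ i_f = f$ holds by construction.

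To see that $i_f$ is a cofibration, I would apply (SM7') to the cofibration $\emptyset \to A$ (available since $A$ is cofibrant) and the boundary inclusion $\partial\Delta^1 \hookrightarrow \Delta^1$, obtaining that $A \otimes \partial\Delta^1 = A \sqcup A \to A \otimes \Delta^1$ is a cofibration in $\M$. One can then recognize $\Cyl(f)$ as the pushout of this cofibration along the map $A \sqcup A \xrightarrow{\id_A \sqcup f} A \sqcup B$ that sends the $\{0\}$-summand via $\id_A$ and the $\{1\}$-summand via $f$. Since pushouts of cofibrations are cofibrations, the induced map $A \sqcup B \to \Cyl(f)$ is a cofibration, and $i_f$ is obtained by precomposing with the $A$-summand inclusion into $A \sqcup B$. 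For the fact that $g$ is a simplicial homotopy equivalence, let $s \colon B \to \Cyl(f)$ be the canonical pushout inclusion, so that $g \circ s = \id_B$ holds on the nose. To show $s \circ g$ is simplicially homotopic to $\id_{\Cyl(f)}$, use the simplicial map $\mu \colon \Delta^1 \times \Delta^1 \to \Delta^1$ given by $(i,j) \mapsto \max(i,j)$, which restricts to $\id_{\Delta^1}$ on $\Delta^1 \times \{0\}$, to the constant map at $\{1\}$ on $\Delta^1 \times \{1\}$, and to the constant map at $\{1\}$ on $\{1\} \times \Delta^1$. Because $-\otimes \Delta^1$ preserves colimits, $\Cyl(f) \otimes \Delta^1$ is itself a pushout, and one defines $H \colon \Cyl(f) \otimes \Delta^1 \to \Cyl(f)$ by the composite $A \otimes (\Delta^1 \times \Delta^1) \xrightarrow{A \otimes \mu} A \otimes \Delta^1 \to \Cyl(f)$ on the cylinder piece and by the constant homotopy $B \otimes \Delta^1 \xrightarrow{\pi_B} B \xrightarrow{s} \Cyl(f)$ on $B$; compatibility on the glued subobject follows since $\mu$ is constant at $\{1\}$ on $\{1\} \times \Delta^1$. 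Evaluating at $0$ and $1$ then yields the required simplicial homotopy from $\id_{\Cyl(f)}$ to $s \circ g$.

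The principal technical obstacle I anticipate is the verification that $i_f$ is a cofibration: the naive factorization $A \xrightarrow{i_0} A \otimes \Delta^1 \to \Cyl(f)$ is not directly usable, because although $i_0$ is a cofibration by (SM7'), the subsequent map $A \otimes \Delta^1 \to \Cyl(f)$ is only a pushout of the unconstrained morphism $f$ and need not be a cofibration. The workaround is to recast $\Cyl(f)$ as a pushout of the boundary cofibration $A \otimes \partial\Delta^1 \to A \otimes \Delta^1$, to which (SM7') applies directly, and to identify $i_f$ with the appropriate summand inclusion of the resulting pushout.
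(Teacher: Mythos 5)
The second half of your argument is fine: $g\circ s=\id_B$ on the nose, and your homotopy $H$ built from $\max\colon\Delta^1\times\Delta^1\to\Delta^1$ (constant on the $B$-piece, $A\otimes\mu$ on the cylinder piece, compatible on the glued copy of $A\otimes\{1\}$) correctly exhibits $s\circ g\simeq\id_{\Cyl(f)}$, so $g$ is a simplicial homotopy equivalence. The gap is in the cofibration step, exactly at the point you flag as the main obstacle, and your workaround does not close it. What the workaround proves is that $A\sqcup B\to\Cyl(f)$ is a cofibration, being a pushout of $A\otimes\partial\Delta^1\to A\otimes\Delta^1$; that part is correct. But you then conclude that $i_f$ is a cofibration because it is this map precomposed with the summand inclusion $A\to A\sqcup B$. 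That inclusion is the pushout of $\emptyset\to B$ along $\emptyset\to A$, so it is a cofibration precisely when $B$ is cofibrant, which is not among the hypotheses; and a cofibration precomposed with an arbitrary map need not be a cofibration. So the argument as written does not show $i_f$ is a cofibration.

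Moreover, this cannot be repaired within the mapping cylinder construction: with $A$ cofibrant, $i_f\colon A\to\Cyl(f)$ is a cofibration \emph{only if} $B$ is cofibrant. Indeed, let $p\colon X\to Y$ be any acyclic fibration and $c\colon B\to Y$ any map; form the lifting square for $i_f$ against $p$ with bottom map $c\circ g\colon\Cyl(f)\to Y$ and top map any $a\colon A\to X$ with $p\circ a=c\circ f$ (such $a$ exists because $A$ is cofibrant). If $i_f$ were a cofibration, a lift $h\colon\Cyl(f)\to X$ would exist, and then $h\circ s$ would be a lift of $c$ through $p$; since $p$ and $c$ were arbitrary, $\emptyset\to B$ would have the left lifting property against all acyclic fibrations, i.e.\ $B$ would be cofibrant. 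So your proof strategy can only succeed under an additional cofibrancy hypothesis on $B$ (or in a category in which all objects are cofibrant); note that already for $A=\emptyset$ the statement asserts that every object admits a cofibrant replacement which is a simplicial homotopy equivalence, which fails in familiar simplicial model categories such as topological spaces with the Quillen model structure. The paper itself gives no proof but refers to \cite[\S 2.4]{dro} for this mapping-cylinder factorization, so your route is the intended one; the missing (and, in this generality, unobtainable) ingredient is precisely the claim that $i_f$ is a cofibration assuming only that $A$ is cofibrant.
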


Our main objects of study are categories whose objects are themselves functors between fixed categories.

\begin{convention} \label{convention}
From this point onward we assume that $\C$ is an essentially small simplicial category and that $\D$ is a cofibrantly generated right proper simplicial model category.  We denote by $\Fun(\C,\D)$ the category whose objects are simplicial functors $\C \rightarrow \D$ and whose morphisms are simplicial natural transformations.  Note that simplicial natural transformations are defined analogously to simplicial functors; see \cite[\S 1.2]{kelly} or \cite[2.10]{proc}.
\end{convention}

The following result tells us that there exists a model structure on the category of simplicial functors $\Fun(\C,\D)$ induced by the model structure on $\D$.  Note that we need some further technical assumptions because we consider simplicial functors between simplicial categories, compared to results such as \cite[11.6.1]{hirschhorn} for ordinary functors.  We omit some of these assumptions in the following theorem, but refer the reader to \cite[4.32]{gm} for the precise statement.

\begin{theorem} \cite[4.2, 4.3, 5.10]{proc}\label{t:Funmodel}
Assuming $\D$ satisfies some mild conditions on the set of generating acyclic cofibrations, the category $\Fun(\C,\D)$ has a cofibrantly generated right proper model structure, called the \emph{projective model structure}, in which a morphism $F \rightarrow G$ in $\Fun(\C,\D)$ is a weak equivalence or a fibration if it is one levelwise, i.e., $FA \rightarrow GA$ is a weak equivalence or fibration, respectively, in $\D$ for all objects $A$ of $\C$.

Furthermore, it has the structure of a simplicial model category. The tensor and cotensor structures are defined by $(F \otimes K)(A) = FA \otimes K$ and $(F^K)(A) = FA^K$, respectively, for each object $A$ in $\C$. 
\end{theorem}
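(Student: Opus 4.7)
The plan is to apply the standard transfer-of-model-structure technique (e.g., Kan's recognition theorem \cite[11.3.2]{hirschhorn}) adapted to the simplicial enrichment, and then to verify right properness and the axioms (SM6)/(SM7) directly from their counterparts in $\D$.

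First, I would produce candidate generating sets using the simplicial Yoneda embedding. Writing $I_\D$ and $J_\D$ for the generating cofibrations and acyclic cofibrations of $\D$, I would set
\[ I_{\Fun} = \{\C(A,-) \otimes i : A \in \ob(\C),\ i \in I_\D\}, \qquad J_{\Fun} = \{\C(A,-) \otimes j : A \in \ob(\C),\ j \in J_\D\}. \]
The crucial computation is the enriched Yoneda isomorphism $\Map_{\Fun(\C,\D)}(\C(A,-) \otimes X, F) \cong \Map_\D(X, FA)$, which exhibits $X \mapsto \C(A,-)\otimes X$ as left adjoint to evaluation $\mathrm{Ev}_A$. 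Consequently a morphism in $\Fun(\C,\D)$ has the right lifting property against $I_{\Fun}$ (resp.\ $J_{\Fun}$) precisely when each $FA \to GA$ is an acyclic fibration (resp.\ fibration) in $\D$, so the candidate fibrations and trivial fibrations are exactly the level-wise ones. Verification of the recognition criteria then proceeds as follows: the two-out-of-three and retract axioms transfer from $\D$ because weak equivalences and fibrations are detected level-wise; smallness of the domains follows from essential smallness of $\C$ together with the smallness hypothesis on $I_\D, J_\D$; and every relative $J_{\Fun}$-cell complex is a level-wise weak equivalence because colimits in $\Fun(\C,\D)$ are computed level-wise and $\mathrm{Ev}_A$ preserves them, reducing the claim to the analogous property in $\D$.

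Right properness is then formal: pullbacks, weak equivalences, and fibrations are all level-wise, so the pullback of a level-wise weak equivalence along a level-wise fibration is level-wise a weak equivalence by right properness of $\D$. For the simplicial structure I would take $(F \otimes K)(A) := FA \otimes K$ and $(F^K)(A) := FA^K$. Axiom (SM6) follows by assembling the object-wise tensor-cotensor-mapping adjunctions in $\D$ and checking naturality across morphisms of $\C$ using the simplicial functoriality of $F$ and $G$. For (SM7), the pullback-corner map $F^L \to F^K \times_{G^K} G^L$ in $\Fun(\C,\D)$ is, level-wise at each $A$, the pullback-corner map $FA^L \to FA^K \times_{GA^K} GA^L$ in $\D$, so (SM7) for $\D$ gives (SM7) for $\Fun(\C,\D)$.

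The main obstacle — and the reason for the hedging phrase \emph{assuming $\D$ satisfies some mild conditions on the set of generating acyclic cofibrations} — lies in the first step. In the enriched setting, plain smallness of the domains of $J_\D$ in $\D$ does not automatically yield smallness of the domains of $J_{\Fun}$ in $\Fun(\C,\D)$, because transfinite compositions in $\Fun(\C,\D)$ interact with the simplicial tensoring with $\C(A,-)$. One must impose a compatibility assumption (spelled out as in \cite[4.32]{gm}) ensuring that $\C(A,-)\otimes(-)$ interacts properly with the relevant filtered colimits; this is the only place where any hypothesis beyond cofibrant generation of $\D$ is used, and it is precisely what the statement encodes.
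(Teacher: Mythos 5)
Your proposal is correct and takes essentially the same route as the paper's source: Theorem \ref{t:Funmodel} is not proved in this paper but cited from \cite{proc} (with \cite[4.32]{gm} supplying the precise hypotheses), where the projective structure is obtained exactly as you describe --- generating sets $R^A\otimes i$ for $i$ in $I_\D$ (resp.\ $J_\D$), the adjunction of Proposition \ref{FunAdjunctions} combined with the Yoneda isomorphism $F^{R^A}\cong FA$, and levelwise verification of right properness and of (SM6)/(SM7), with the extra hypotheses entering through the small object argument just as you flag. One small gloss: levelwise, a relative $J_{\Fun}$-cell complex is a transfinite composite of pushouts of maps $\Map_\C(A,B)\otimes j$ rather than of maps in $J_\D$ itself, so the acyclicity step uses that such tensors are acyclic cofibrations by (SM7$'$) (every simplicial set being cofibrant) together with closure of acyclic cofibrations under pushout and transfinite composition, rather than literally ``the analogous property in $\D$.''
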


Given that the category $\Fun(\C,\D)$ is enriched in $\s$, one might wonder whether it is also enriched in $\Fun(\C,\s)$. However, such an enrichment would require $\Fun(\C,\s)$ to be a closed monoidal category, which is not true in general.  Nevertheless, we show in Proposition \ref{FunAdjunctions} and Lemma \ref{pullbackgen} that $\Fun(\C,\D)$ does satisfy generalizations of axioms (SM6) and (SM7) from Definition \ref{SimpModelCat}, and therefore enjoys many of the properties of a model category enriched over $\Fun(\C,\s)$.  

\begin{definition} \label{representable}
For each object $C$ of $\C$, the simplicial functor $R^C \colon \C \rightarrow \s$ \emph{represented by} $C$ sends each object $A$ of $\C$ to the simplicial set $\Map_\C(C,A)$. Dually, the simplicial functor $R_C \colon \C^{\op} \rightarrow \s$ sends each object $A$ of $\C$ to  $\Map_\C(A,C)$.
\end{definition}

The following definition is critical to many of our arguments in this paper.

\begin{definition} \label{Fend}
Let $F \colon \C \rightarrow \D$ and $X \colon \C \rightarrow \s$ be simplicial functors.  The \emph{evaluated cotensor} of the pair $(F,X)$ is the equalizer  
\[ F^X := \int_{A} FA^{XA} \rightarrow \prod_{A} FA^{XA} \rightrightarrows \prod_{A,B} FB^{(XA^{\Map_{\C}(A,B)})} \] 
in $\D$ whose parallel morphisms are described explicitly in \cite[\S 3]{proc}.  Here, $A$ and $B$ range over objects of $\C$.
\end{definition}

\begin{remark} \label{cotensor} \label{brremark}  
\leavevmode
\begin{itemize}
\item As suggested by the notation $F^X = \int_{A} FA^{XA}$, the evaluated cotensor can be thought of as a generalization of an ordinary end.  In the special case when $\D = \s$, the assignment $F^{X}(A,B) = FB^{XA}$ defines a simplicial bifunctor $\C^{\op} \times \C \rightarrow \s$ whose end is precisely $F^X$.

\item This construction is originally due to Biedermann and R\"ondigs \cite[2.5]{br}, who use the notation $\mathbf{hom}(X,F)$.  We have chosen the name ``cotensor'' and the notation we use here to emphasize the fact that it behaves much like an ordinary cotensor; see Proposition \ref{FunAdjunctions}. 
\end{itemize}
\end{remark} 

Recall that a simplical adjunction satisfies the usual condition for an adjunction, but using mapping spaces rather than hom sets.  It can be thought of as a special case of a $\V$-adjunction for categories enriched in a general $\V$; see \cite[\S 1.11]{kelly}. The reference for the following proposition, which is an analogue of axiom (SM6) of Definition \ref{SimpModelCat} but for the evaluated cotensor, is stated at that level of generality. 

\begin{prop} \label{FunAdjunctions} \cite[4.4, 4.5, 4.6]{proc}  
For any simplicial functors $X \colon \C \rightarrow \s$ and $F \colon \C \rightarrow \D$ and object $D$ of $\D$ there are simplicial adjunctions
\vspace{-2mm}
\begin{center}
\begin{tikzpicture}[node distance=3.3cm, auto]

\node (A) {$\D$};
\node (B) [right of=A] {$\Fun(\C, \D)$};
\draw[->, bend left=35] (A) to node {$X \otimes -$} (B);
\draw[->, bend left=35] (B) to node {$(-)^X$} (A);
\node (W) [node distance=1.65cm, right of=A] {$\perp$};

\node (C) [node distance=1.5cm, right of=B] {$\D$};
\node (D) [right of=C] {$\Fun(\C, \s)^{\op}$};
\draw[->, bend left=35] (C) to node {$\Map_\D(-,F)$} (D);
\draw[->, bend left=35] (D) to node {$F^{(-)}$} (C);
\node (U) [node distance=1.65cm, right of=C] {$\perp$};

\node (E) [node distance=2.2cm, right of=D] {$\Fun(\C, \s)$};
\node (F) [right of=E] {$\Fun(\C, \D)$};
\draw[->, bend left=35] (E) to node {$- \otimes D$} (F);
\draw[->, bend left=35] (F) to node {$\Map_\D(D,-)$} (E);
\node (V) [node distance=1.65cm, right of=E] {$\perp$};

\end{tikzpicture} 
\end{center}
where $(X \otimes D)(C) := XC \otimes D$ and $\Map_{\D}(D,F)(A) := \Map_{\D}(D,FA)$. In particular, there are simplicial natural isomorphisms
\[ \Map_{\Fun(\C, \D)} (X \otimes D, F)  \cong  \Map_{\Fun(\C, \s)} ( X, \Map_{\D}(D, F)) \cong \Map_{\D}(D, F^X). \]
\end{prop}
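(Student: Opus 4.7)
The plan is to establish the displayed chain of simplicial natural isomorphisms $\Map_{\Fun(\C,\D)}(X \otimes D, F) \cong \Map_{\Fun(\C,\s)}(X, \Map_\D(D, F)) \cong \Map_\D(D, F^X)$ directly, and then extract each of the three adjunctions by fixing one of the three arguments at a time. The basic tools are the standard description of the mapping space between simplicial functors as an end of mapping spaces in the target, combined with the simplicial adjunction $- \otimes D \dashv \Map_\D(D,-)$ in $\D$ furnished by axiom (SM6).

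For the first isomorphism I would begin from the formula $\Map_{\Fun(\C,\D)}(G, F) \cong \int_A \Map_\D(GA, FA)$ applied to $G = X \otimes D$, so that the left-hand side becomes $\int_A \Map_\D(XA \otimes D, FA)$. The simplicial adjunction in $\D$ supplies a natural isomorphism $\Map_\D(XA \otimes D, FA) \cong \Map_\s(XA, \Map_\D(D, FA))$ at each object $A$ of $\C$, and passing this isomorphism through the end produces $\int_A \Map_\s(XA, \Map_\D(D, FA)) \cong \Map_{\Fun(\C,\s)}(X, \Map_\D(D, F))$.

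For the second isomorphism I would start from the definition of the evaluated cotensor as the end $F^X = \int_A FA^{XA}$. Because $\Map_\D(D, -)$ is representable, it preserves all limits, so applying it to the defining equalizer yields $\Map_\D(D, F^X) \cong \int_A \Map_\D(D, FA^{XA})$. The simplicial cotensor identity in $\D$ gives $\Map_\D(D, FA^{XA}) \cong \Map_\D(D, FA)^{XA} \cong \Map_\s(XA, \Map_\D(D, FA))$, and the resulting end over $A$ is once again $\Map_{\Fun(\C,\s)}(X, \Map_\D(D, F))$.

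With the chain of isomorphisms in place, the three adjunctions follow by letting the three arguments vary independently: fixing $X$ gives $X \otimes - \dashv (-)^X$, fixing $D$ gives $- \otimes D \dashv \Map_\D(D, -)$, and fixing $F$ gives the contravariant adjunction $\Map_\D(-, F) \dashv F^{(-)}$, with $\Fun(\C,\s)^{\op}$ accounting for the contravariance in $\D$. The main technical point, and the step easiest to overlook, is to verify that every isomorphism above is \emph{simplicially} natural rather than merely set-theoretically natural. This ultimately reduces to the simplicial naturality of the end formula for mapping spaces of simplicial functors, together with the fact that the levelwise simplicial adjunction in $\D$ assembles coherently through the end construction — routine bookkeeping, but one must track the enrichment carefully at each stage.
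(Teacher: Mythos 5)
Your argument is correct, and it is the standard route: the paper itself gives no proof of this proposition but cites the prequel \cite[4.4, 4.5, 4.6]{proc}, where the adjunctions are obtained in essentially the way you describe — writing $\Map_{\Fun(\C,\D)}(X \otimes D, F)$ as an end of mapping spaces, applying (SM6) in $\D$ under the end, and using that $\Map_\D(D,-)$ preserves the limits (products, equalizer, cotensors) defining $F^X$. Your only elided step, checking that the levelwise isomorphisms intertwine the parallel pair in the defining equalizer of $F^X$ and are simplicially natural, is exactly the bookkeeping carried out in \cite{proc}, so there is no gap in substance.
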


The evaluated cotensor also provides an enriched version of the (co)Yoneda lemma, stated below. The proof follows an argument similar to the one outlined in \cite[\S 2.4]{kelly} for the case where $\C$ is a $\V$-category for some closed monoidal category $\V$ and $\D=\V$.

\begin{lemma} \cite[3.5, 3.9]{proc} \label{Yoneda} 
Let $F \colon \C \rightarrow \D$ be a simplicial functor.  For each object $C$ of $\C$ there is a natural isomorphism 
\[ FC \cong F^{R^C} = \int_{A} FA^{R^C(A)} = \int_{A} FA^{\Map_{\C}(C,A)}. \]
Dually, we have
\vspace{-2mm}
\[ FC \cong F^{R_C} = \int^{A} FA^{R_C(A)} = \int^{A} FA \otimes {\Map_{\C}(C,A)}. \]
\end{lemma}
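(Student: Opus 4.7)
The plan is to reduce both isomorphisms to the classical simplicially enriched Yoneda lemma for functors valued in $\s$ (the case covered by \cite[\S 2.4]{kelly} since $\s$ itself is closed monoidal), by exploiting the adjunctions established in Proposition \ref{FunAdjunctions} to transfer the question from $\D$-valued functors to $\s$-valued ones, and then invoking Yoneda in $\D$ to conclude.

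For the first isomorphism $FC \cong F^{R^C}$, I would test on mapping spaces in $\D$. Given any object $D$ of $\D$, the middle adjunction of Proposition \ref{FunAdjunctions} supplies a natural isomorphism
\[ \Map_{\D}(D, F^{R^C}) \;\cong\; \Map_{\Fun(\C,\s)}\bigl(R^C,\,\Map_{\D}(D,F)\bigr). \]
Now $\Map_{\D}(D, F(-)) \colon \C \to \s$ is a simplicial functor, so the classical simplicial Yoneda lemma identifies the right-hand side with $\Map_{\D}(D,F)(C) = \Map_{\D}(D, FC)$. Thus $\Map_{\D}(D, F^{R^C}) \cong \Map_{\D}(D, FC)$ naturally in $D$, and the Yoneda lemma for the simplicially enriched category $\D$ then yields $F^{R^C} \cong FC$.

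For the dual statement, I would run an analogous argument using the first adjunction of Proposition \ref{FunAdjunctions}. Concretely, for any object $D$ of $\D$ the universal property of the coend and the simplicial adjunction $(X \otimes -) \dashv (-)^X$ together give
\[ \Map_{\D}\!\left(\int^{A} FA \otimes \Map_{\C}(C,A),\, D\right) \;\cong\; \int_{A}\Map_{\D}(FA, D)^{\Map_{\C}(C,A)}, \]
which one recognizes as $\Map_{\Fun(\C,\s)}(R^C, \Map_{\D}(F,D))$ (up to the standard handling of variance). Applying the $\s$-enriched Yoneda lemma reduces this to $\Map_{\D}(FC, D)$, and a second appeal to Yoneda in $\D$ produces the claimed isomorphism $\int^A FA \otimes \Map_{\C}(C,A) \cong FC$.

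The main technical hurdle will be the careful bookkeeping of the enrichments: making sure that all displayed isomorphisms are not merely isomorphisms of sets or simplicial sets but are compatible with the simplicial structure on morphism objects, so that the final application of Yoneda in $\D$ is legitimate. Once the adjunctions of Proposition \ref{FunAdjunctions} are in hand as \emph{simplicial} adjunctions, and the Yoneda lemma for $\Fun(\C,\s)$ is invoked in its enriched form, this naturality is automatic and the argument reduces to a diagram chase that mirrors the classical one in \cite[\S 2.4]{kelly}.
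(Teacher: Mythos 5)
Your argument is correct, and it is worth noting how it sits relative to the paper: the paper does not prove this lemma at all, but cites the prequel \cite{proc} and remarks that the proof there follows the outline of Kelly's argument in \cite[\S 2.4]{kelly} for the case $\D=\V$, i.e.\ a direct verification that $FC$, equipped with canonical structure maps, satisfies the universal property of the end $\int_A FA^{\Map_\C(C,A)}$ (and dually for the coend). Your route--testing against $\Map_\D(D,-)$, transferring across the simplicial adjunctions of Proposition \ref{FunAdjunctions}, invoking the classical $\s$-enriched Yoneda lemma for $\Fun(\C,\s)$, and then applying Yoneda in $\D$--is a clean repackaging of that same universal-property argument: unwinding $\Map_\D(D,F^{R^C})$ via the end formula is exactly what it means to verify the universal property, so the two approaches differ mainly in organization. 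What your version buys is brevity and a clear separation of the enrichment bookkeeping (it is all absorbed into Proposition \ref{FunAdjunctions} and the standard enriched Yoneda lemma); what it costs is a dependency one should check is not circular. Within this paper's ordering Proposition \ref{FunAdjunctions} precedes the lemma, and its proof in \cite{proc} is a direct end computation that does not use the Yoneda isomorphism, so the dependency is harmless, but since in \cite{proc} the Yoneda lemma (3.5, 3.9) is proved before the adjunctions (4.4--4.6), this is worth saying explicitly.

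One point you wave at ("standard handling of variance") deserves more care, though it is not a genuine gap. In the dual half the coend must be indexed by the \emph{contravariant} representable $R_C$, with $R_C(A)=\Map_\C(A,C)$; the expression $\Map_\C(C,A)$ appearing in the displayed coend (both in your sketch and in the statement as printed) has the wrong variance for $\int^A FA\otimes(-)$ to be a coend of a bifunctor $\C^{\op}\times\C\to\D$. The correct chain is
\[ \Map_\D\Bigl(\textstyle\int^{A} FA\otimes \Map_\C(A,C),\,D\Bigr)\;\cong\;\textstyle\int_{A}\Map_\D(FA,D)^{\Map_\C(A,C)}\;\cong\;\Map_{\Fun(\C^{\op},\s)}\bigl(R_C,\Map_\D(F-,D)\bigr)\;\cong\;\Map_\D(FC,D), \]
using the enriched Yoneda lemma for presheaves $\C^{\op}\to\s$ rather than for $R^C$; with that substitution your argument goes through verbatim.
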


The next lemma establishes that a version of the condition (SM7) in Definition \ref{SimpModelCat} holds for the evaluated cotensor.  

\begin{lemma}\label{pullbackgen}
Consider $\Fun(\C,\D)$ and $\Fun(\C,\s)$, each equipped with the projective model structure. If $X \rightarrow Y$ is a cofibration in $\Fun(\C, \s)$ and $F \rightarrow G$ is a fibration in $\Fun(\C,\D)$, then $F^Y \rightarrow F^X \times_{G^X} G^Y$ is a fibration in $\D$ that is a weak equivalence if either $X \rightarrow Y$ or $F \rightarrow G$ is.
\end{lemma}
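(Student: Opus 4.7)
The plan is to use the simplicial adjunctions of Proposition \ref{FunAdjunctions} to convert the claim into a pushout-product statement, and then verify that statement by reducing to generators.

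First, I would fix an (acyclic) cofibration $A \to B$ in $\D$. By Proposition \ref{FunAdjunctions}, the bifunctor $(X, D) \mapsto X \otimes D$ from $\Fun(\C, \s) \times \D$ to $\Fun(\C, \D)$ is left adjoint in each variable to the evaluated cotensor (as a functor of $F$ with $X$ fixed) and to $\Map_\D(D, -)$ (as a functor of $F$ with $D$ fixed). Chasing through these adjunctions, a lifting problem for $A \to B$ against $F^Y \to F^X \times_{G^X} G^Y$ in $\D$ corresponds bijectively to a lifting problem for the pushout-product map
\[ p \colon (X \otimes B) \sqcup_{X \otimes A} (Y \otimes A) \to Y \otimes B \]
against $F \to G$ in $\Fun(\C, \D)$. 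Thus it suffices to show that $p$ is a projective cofibration in $\Fun(\C, \D)$, acyclic whenever either $X \to Y$ or $A \to B$ is.

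Next, I would argue that the bifunctor $- \otimes -$ is a left Quillen bifunctor. Since it is cocontinuous in each variable (having the right adjoints noted above), a standard closure argument reduces the task to checking the pushout-product condition on generating (acyclic) cofibrations of $\Fun(\C, \s)$ and $\D$. The generating cofibrations of the projective model structure on $\Fun(\C, \s)$ take the form $R^C \otimes \partial \Delta^n \to R^C \otimes \Delta^n$ for $C$ an object of $\C$, and the generating acyclic cofibrations take the form $R^C \otimes \Lambda^n_k \to R^C \otimes \Delta^n$. For such a generator, a direct computation using
\[ ((R^C \otimes K) \otimes D)(A') = (\Map_\C(C, A') \times K) \otimes_\D D \cong \Map_\C(C, A') \otimes_\D (K \otimes_\D D) \]
identifies $(R^C \otimes K) \otimes D$ with $L_C(K \otimes_\D D)$, where $L_C \colon \D \to \Fun(\C, \D)$ denotes the left adjoint to evaluation at $C$ (sending $E$ to $R^C \otimes E$, as follows from Lemma \ref{Yoneda} and the first adjunction of Proposition \ref{FunAdjunctions}). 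Since projective fibrations and acyclic fibrations are defined levelwise, evaluation at $C$ is right Quillen, so $L_C$ is left Quillen. Hence the pushout-product of $R^C \otimes \partial \Delta^n \to R^C \otimes \Delta^n$ with $A \to B$ identifies with $L_C$ applied to the pushout-product of $\partial \Delta^n \to \Delta^n$ with $A \to B$ in $\D$, which is a (acyclic) cofibration by axiom (SM7) for $\D$; the case when $X \to Y$ is a generating acyclic cofibration is analogous.

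I expect the main technical point to be the closure argument in the second paragraph: namely, verifying that the class of morphisms $X \to Y$ in $\Fun(\C, \s)$ for which the pushout-product with a fixed (acyclic) cofibration of $\D$ yields a projective (acyclic) cofibration in $\Fun(\C, \D)$ is closed under pushouts, transfinite compositions, and retracts, so that it contains all (acyclic) cofibrations once the generators are handled. Once this closure is in place, the rest of the argument reduces to (SM7) for $\D$ together with the fact that $L_C$ is left Quillen.
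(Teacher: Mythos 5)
Your proposal is correct, but it runs in the opposite direction from the paper's proof. Both arguments hinge on the two-variable adjunction of Proposition \ref{FunAdjunctions}; you transpose the lifting problem into $\Fun(\C,\D)$ and verify that the pushout-product $p$ is a projective (acyclic) cofibration by reducing to the generating (acyclic) cofibrations $R^C \otimes (\partial\Delta^n \to \Delta^n)$, resp.\ $R^C \otimes (\Lambda^n_k \to \Delta^n)$, of $\Fun(\C,\s)$, identifying the resulting pushout-products with your functor $L_C$ applied to pushout-products in $\D$, and invoking (SM7') of Remark \ref{tensorcotensorrmk} together with the saturation/closure lemma you flag. The paper transposes the other way: for an acyclic cofibration $D \to E$ in $\D$ it applies (SM7) levelwise in $\D$ and (SM6) to show that the corner map $\Map_\D(E,F) \to \Map_\D(D,F)\times_{\Map_\D(D,G)}\Map_\D(E,G)$ is a levelwise, hence projective, acyclic fibration in $\Fun(\C,\s)$, then lifts the projective cofibration $X \to Y$ against it directly using the defining lifting property of projective cofibrations, and transposes back; the weak equivalence statements are handled analogously. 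The trade-off is clear: the paper's route is more elementary and needs no cofibrant generation of $\Fun(\C,\s)$, no explicit description of its generators, and no closure argument, while your route, once the (standard) saturation step is written out, establishes the stronger and reusable fact that $- \otimes - \colon \Fun(\C,\s)\times\D \to \Fun(\C,\D)$ satisfies the full pushout-product axiom, from which Lemma \ref{pullbackgen} and Corollary \ref{cotensorwefib} follow by adjunction. One bookkeeping point worth making explicit in your write-up: the case where $F \to G$ is an acyclic fibration is covered because there $p$ need only be a (not necessarily acyclic) cofibration, which your generator computation already supplies.
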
 

\begin{proof}
Let $F \rightarrow G$ be a fibration in $\Fun(\C,\D)$, so   
that $FA \rightarrow GA$ is a fibration in $\D$ for any object $A$ of $\C$. If $K \rightarrow L$ is a cofibration in $\s$, then by (SM7) the pullback-corner map for the ordinary cotensor
\vspace{-2mm}
\[ FA^L \rightarrow FA^K \times_{GA^K} GA^L \]
is a fibration in $\D$.  Thus, if $D \rightarrow E$ is an acyclic cofibration in $\D$, there exists a lift in any diagram of the form
\vspace{-2mm}
\begin{center}
\begin{tikzpicture}[node distance=1.7cm, auto]

\node (A) {$D$};
\node (B) [below of=A]{$E$};
\node (C) [node distance=3.5cm, right of=A] {$FA^L$};
\node (D) [node distance=3.5cm, right of=B] {$FA^K \times_{GA^K} GA^L$.};

\draw[left hook->] (A) to node {$\simeq$} (B);
\draw[left hook->] (A) to node [swap] {$i$} (B);
\draw[->] (A) to node {} (C);
\draw[->] (B) to node [swap] {} (D);
\draw[->>] (C) to node {} (D);
\draw[->, dashed] (B) to node {} (C);

\end{tikzpicture}
\end{center}
Using the natural isomorphism $\Map_{\D}(E,FA)^L \cong \Map_{\D}(E, FA^L)$ given by taking $\M = \D$ in axiom (SM6),  we have a commutative diagram
\vspace{-2mm}
\begin{center}
\begin{tikzpicture}[node distance=1.7cm, auto]

\node (A) {$K$};
\node (B) [below of=A]{$L$};
\node (C) [node distance=5.7cm, right of=A] {$\Map_{\D}(E,FA)$};
\node (D) [node distance=5.7cm, right of=B] {$\Map_{\D}(D,FA) \times_{\Map_{\D}(D,GA)} \Map_{\D}(E,GA)$};

\draw[left hook->] (A) to node [swap] {} (B);
\draw[->] (A) to node {} (C);
\draw[->] (B) to node [swap] {} (D);
\draw[->] (C) to node {} (D);
\draw[->, dashed] (B) to node {} (C);

\end{tikzpicture}
\end{center}
in $\s$. Since $K \rightarrow L$ is assumed to be a cofibration, and using the fact that $A$ is an arbitrary object of $\C$, we can conclude that the map
\[ \Map_\D(E,F) \rightarrow \Map_\D(D,F) \times_{\Map_\D(D,G)} \Map_\D(E,G) \]
is an acyclic fibration in $\Fun(\C,\s)$. It follows that if $X \rightarrow Y$ is a cofibration in $\Fun(\C,\s)$, then a lift exists in any diagram of the form
\vspace{-2mm}
\begin{center}
\begin{tikzpicture}[node distance=1.7cm, auto]

\node (A) {$X$};
\node (B) [below of=A]{$Y$};
\node (C) [node distance=5.3cm, right of=A] {$\Map_{\D}(E,F)$};
\node (D) [node distance=5.3cm, right of=B] {$\Map_{\D}(D,F) \times_{\Map_{\D}(D,G)} \Map_{\D}(E,G)$.};

\draw[left hook->] (A) to node [swap] {} (B);
\draw[->] (A) to node {} (C);
\draw[->] (B) to node [swap] {} (D);
\draw[->>] (C) to node {$\simeq$} (D);
\draw[->, dashed] (B) to node {} (C);

\end{tikzpicture}
\end{center}
Applying axiom (SM6) once more, we equivalently obtain a lift 
\vspace{-2mm}
\begin{center}
\begin{tikzpicture}[node distance=1.7cm, auto]

\node (A) {$D$};
\node (B) [below of=A]{$E$};
\node (C) [node distance=3cm, right of=A] {$F^Y$};
\node (D) [node distance=3cm, right of=B] {$F^X \times_{G^X} G^Y$,};

\draw[left hook->] (A) to node {$\simeq$} (B);
\draw[left hook->] (A) to node [swap] {$i$} (B);
\draw[->] (A) to node {} (C);
\draw[->] (B) to node [swap] {} (D);
\draw[->] (C) to node {} (D);
\draw[->, dashed] (B) to node {} (C);

\end{tikzpicture}
\end{center}
so we have thus shown that the map 
\[ F^Y \rightarrow F^X\times_{G^X} G^Y \]
is a fibration in $\D$. The proof that $F^Y \rightarrow F^X\times_{G^X} G^Y$ is also a weak equivalence if either $X \rightarrow Y$ or $F \rightarrow G$ is a weak equivalence is analogous.
\end{proof}

\begin{cor} \label{cotensorwefib}
\leavevmode
\begin{enumerate}
    \item If $X \rightarrow Y$ is a cofibration in $\Fun(\C,\s)$ and $F$ is a fibrant object in $\Fun(\C,\D)$, then $F^Y \rightarrow F^X$ is a fibration in $\D$ that is a weak equivalence if $X \rightarrow Y$ is.
    
    \item If $X$ is a cofibrant object in $\Fun(\C,\s)$ and $F \rightarrow G$ is a fibration in $\Fun(\C,\D)$, then $F^X \rightarrow G^X$ is a fibration in $\D$ that is a weak equivalence if $F \rightarrow G$ is.
\end{enumerate}
\end{cor}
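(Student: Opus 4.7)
The plan is to deduce both statements directly from Lemma \ref{pullbackgen} by instantiating one of its two input maps with an appropriate boundary case, so the pullback-corner in the statement of the lemma collapses to the simple map we want. The only thing that needs to be checked beyond the lemma itself is that the evaluated cotensor behaves well with respect to terminal and initial objects, which I will verify using Proposition \ref{FunAdjunctions}.

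For part (2), I would apply Lemma \ref{pullbackgen} to the cofibration $\emptyset \rightarrow X$ in $\Fun(\C,\s)$, where $\emptyset$ denotes the initial object (which is cofibrant since $X$ is), together with the given fibration $F \rightarrow G$. For any simplicial functor $H \colon \C \rightarrow \D$, the definition of the evaluated cotensor gives
\[ H^\emptyset = \int_A HA^{\emptyset A} = \int_A HA^\emptyset, \]
and by the adjunction in Proposition \ref{FunAdjunctions} the ordinary cotensor $HA^\emptyset$ is terminal in $\D$ (since $\emptyset \otimes D$ is initial, so mapping out of it is always a point). Hence $F^\emptyset \cong G^\emptyset \cong *$ in $\D$, and the pullback-corner map of Lemma \ref{pullbackgen} becomes $F^X \rightarrow F^\emptyset \times_{G^\emptyset} G^X \cong G^X$, yielding exactly the map in the statement. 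The lemma then immediately gives the fibration claim and the weak equivalence claim when $F \rightarrow G$ is acyclic.

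For part (1), I would dually apply Lemma \ref{pullbackgen} to the given cofibration $X \rightarrow Y$ and to the fibration $F \rightarrow *$ in $\Fun(\C,\D)$, where $*$ is the terminal object (so the map is a fibration precisely because $F$ is fibrant). Again from Proposition \ref{FunAdjunctions}, for any simplicial functor $X \colon \C \rightarrow \s$ the adjunction $\Map_\D(D, *^X) \cong \Map_{\Fun(\C,\s)}(X, \Map_\D(D,*))$ forces $*^X \cong *$ in $\D$, so the pullback-corner map degenerates to $F^Y \rightarrow F^X \times_* * \cong F^X$, and the claim follows from Lemma \ref{pullbackgen}. There is no real obstacle here beyond the two trivial computations of the cotensor against initial and terminal objects; the work was already done in Lemma \ref{pullbackgen}.
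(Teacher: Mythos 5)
Your proposal is correct and is essentially the paper's (implicit) argument: the corollary is stated without separate proof precisely because it is the specialization of Lemma \ref{pullbackgen} to the cofibration $\emptyset \rightarrow X$ and to the fibration $F \rightarrow \ast$, using that the evaluated cotensors $F^{\emptyset}$ and $\ast^{X}$ are terminal in $\D$. Your verification of those two degenerate cotensor computations (most cleanly via the isomorphism $\Map_{\D}(D,F^{X}) \cong \Map_{\Fun(\C,\s)}(X,\Map_{\D}(D,F))$ of Proposition \ref{FunAdjunctions}) is exactly the right supporting detail.
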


\begin{remark}
Lemma \ref{pullbackgen} and Corollary \ref{cotensorwefib} also hold when the model structure on $\Fun(\C,\D)$ is a localization of the projective model structure with the same cofibrations, and whose fibrations form a subclass of the projective fibrations.  In particular, they apply to model structures obtained by applying the localization given in Theorem \ref{BousfieldQ}, from which we get our main examples in this paper, as well as any left Bousfield localization in the sense of \cite{hirschhorn}. 
\end{remark}

Finally, we include a result about the interaction between the evaluated cotensor and homotopy (co)limits.

\begin{prop} \cite[6.5]{proc} \label{evcotensorholim}
    Let $\mathcal I$ be a small category and $\mathbf X$ an $\mathcal I$-diagram in $\Fun(\C,\s)$.  Let $\D$ be a cofibrantly generated simplicial model category such that the projective model structure on $\Fun(\C,\D)$ exists.  Then for any simplicial functor $F \colon \C \rightarrow D$, there is a natural isomorphism
     \vspace{-2mm}
    \[ F^{\underset{i}{\hocolim} \mathbf X_i} \cong \underset{i}{\holim} \, F^{\mathbf X_i}. \]
\end{prop}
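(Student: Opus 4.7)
The plan is to exploit the Bousfield--Kan construction of homotopy (co)limits in simplicial model categories. Since $\Fun(\C,\s)$ (by Theorem \ref{t:Funmodel}) and $\D$ (by hypothesis) are both simplicial model categories, the homotopy colimit of a diagram $\mathbf{X} \colon \mathcal{I} \to \Fun(\C,\s)$ can be modelled as a coend $\hocolim_i \mathbf{X}_i \cong \int^{i \in \mathcal{I}} \mathbf{X}_i \otimes B_i$ for appropriate simplicial sets $B_i$ (nerves of slice categories of $\mathcal{I}$), and the homotopy limit of a diagram in $\D$ is dually modelled as the end $\holim_i \mathbf{Y}_i \cong \int_{i \in \mathcal{I}} \mathbf{Y}_i^{B_i}$ with the \emph{same} $B_i$. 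The proof will apply the evaluated cotensor $F^{(-)}$ to the coend on the left and reorganize it into the end on the right.

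Two properties of the evaluated cotensor do all the work. First, by the middle adjunction of Proposition \ref{FunAdjunctions}, $F^{(-)} \colon \Fun(\C,\s)^{\op} \to \D$ is a right adjoint and therefore preserves limits; regarded as a contravariant functor out of $\Fun(\C,\s)$, it sends colimits (and in particular coends) to limits (and in particular ends). Second, for any simplicial set $K$ and $X \in \Fun(\C,\s)$, there is a natural isomorphism $F^{X \otimes K} \cong (F^X)^K$, obtained from the defining end of the evaluated cotensor together with the fact that ordinary cotensor with $K$ is a right adjoint in $\D$ and therefore commutes with the end over $A$:
\[ F^{X \otimes K} = \int_A FA^{XA \times K} \cong \int_A (FA^{XA})^K \cong \left(\int_A FA^{XA}\right)^K = (F^X)^K. \]

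Assembling these two facts and applying them to the Bousfield--Kan formula yields a chain of natural isomorphisms
\[ F^{\hocolim_i \mathbf{X}_i} \cong F^{\int^i \mathbf{X}_i \otimes B_i} \cong \int_i F^{\mathbf{X}_i \otimes B_i} \cong \int_i (F^{\mathbf{X}_i})^{B_i} \cong \holim_i F^{\mathbf{X}_i}, \]
as desired. Naturality in $F$ and in $\mathbf{X}$ is automatic from the naturality of the adjunctions in Proposition \ref{FunAdjunctions} and of the Bousfield--Kan construction. The principal obstacle, such as it is, is bookkeeping: one must verify that the variances in the nerves of the slice categories agree on the two sides so that the same $B_i$ genuinely appear, and must be careful that, since a coend is a colimit over the twisted arrow category of $\mathcal{I}$, its image under $F^{(-)}$ is indeed the corresponding end. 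No cofibrancy hypothesis on $\mathbf{X}$ is needed, because the statement asserts a strict isomorphism of these explicit coend/end models rather than a statement about the correct homotopy type.
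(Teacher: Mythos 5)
Your argument is correct and is essentially the proof that this statement receives in the cited prequel \cite[6.5]{proc} (the present paper only quotes the result): one uses the middle adjunction of Proposition \ref{FunAdjunctions} to see that $F^{(-)}$ carries colimits in $\Fun(\C,\s)$ to limits in $\D$, together with the compatibility $F^{X\otimes K}\cong (F^X)^K$, and applies this to Hirschhorn's coend/end models of $\hocolim$ and $\holim$. The one point genuinely requiring care is the variance bookkeeping you flag: $i\mapsto F^{\mathbf X_i}$ is an $\mathcal I^{\op}$-diagram, and the identification $B(\mathcal I^{\op}\downarrow i)\cong B(i\downarrow\mathcal I)^{\op}$ is what makes the simplicial sets appearing in the end for $\holim_i F^{\mathbf X_i}$ agree with those in the coend for $\hocolim_i\mathbf X_i$, after which the isomorphism follows exactly as you describe.
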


\section{Background on functor calculus} \label{s:fcbackground}

In this section, we give a brief review of two different flavors of functor calculus: Goodwillie's homotopy functor calculus \cite{goodwillie3} and the discrete functor calculus of Bauer, Johnson, and McCarthy \cite{bjm}.  Each of these functor calculi associates to a functor $F$ a ``Taylor" tower of functors under $F$
\vspace{-2mm}
\begin{center}
\begin{tikzpicture}[node distance=2cm, auto]

\node (A) {$F$};
\node (E) [node distance=1.5cm, below of=A] {$P_nF$};
\node (D) [left of=E] {$...$};
\node (C) [left of=D] {$P_1F$};
\node (B) [left of=C] {$P_0F$};
\node (F) [right of=E] {$P_{n+1}F$};
\node (G) [right of=F] {$...$};

\draw[->] (C) to node {} (B);
\draw[->] (D) to node {} (C);
\draw[->] (E) to node {} (D);
\draw[->] (F) to node {} (E);
\draw[->] (G) to node {} (F);

\draw[->] (A) to node {} (B);
\draw[->] (A) to node {} (C);
\draw[->] (A) to node {} (D);
\draw[->] (A) to node {} (E);
\draw[->] (A) to node {} (F);
\draw[->] (A) to node {} (G);

\end{tikzpicture}
\end{center}
that are analogous to Taylor polynomial approximations to a function; the $n$th term $P_nF$ is ``degree $n$" in some sense and serves as a universal degree $n$ approximation to $F$.  We review the construction of the $n$th term in the towers for Goodwillie's calculus and the discrete calculus, and establish some properties that will be needed in subsequent sections.

We use the following notation.  For any $n\geq 1$, let $\P(\mathbf{n})$ denote the poset of subsets of the set $\mathbf{n}=\{1, \dots, n\}$.  Let $\P_0(\mathbf{n})$ denote the poset of non-empty subsets of $\mathbf{n}$ and $\P_{\leq 1}(\mathbf {n})$ the full subcategory of $\P(\mathbf{n})$ spanned by the subsets of cardinality less than or equal to $1$. An \emph{$n$-cubical diagram} in a category $\C$ is a functor $\X \colon \P(\mathbf{n}) \rightarrow \C$.  For more details about $n$-cubical homotopy theory see \cite{goodwillie2}, \cite[\S 4.1]{kuhn1}, or \cite{mv}.

\subsection{Goodwillie calculus}\label{s:Goodwillie calculus}

We  describe the first version of functor calculus that we consider, namely, the  calculus of homotopy functors as defined by Goodwillie. This calculus grew out of work on pseudoisotopy theory and Waldhausen's algebraic $K$-theory of spaces, and has led to important results in $K$-theory and homotopy theory; see, for example, \cite{am}, \cite{behrens}, \cite{dundasmccarthy}, \cite{kuhn2}, and \cite{mccarthy}.  The Taylor tower for this calculus is constructed in \cite{goodwillie3}, which serves as the reference for the definitions in this subsection. 

We note that instead of restricting our source and target categories to spaces and spectra as Goodwillie did, we work in the more general setting of simplicial model categories, an approach that Kuhn developed in \cite{kuhn1}.  In comparison to their approaches, we impose more structure on our category of functors and define $\Fun(\C,\D)$ to be the category whose objects are simplicial functors, and use a simplicial model structure on it to define an $n$-excisive model structure in Section \ref{n-excisive model}.   To start, we describe the construction of the Taylor tower as developed by Goodwillie and Kuhn and then, at the end of this section, we describe the adjustments that need to be made for our purposes. 

\begin{convention} \label{GoodwilleConventions} 
In addition to the assumptions on $\C$ and $\D$ from Convention \ref{convention}, we assume in this subsection that $\C$ is a simplicial subcategory of a simplicial model category that is closed under finite homotopy colimits and has a terminal object $\ast_{\C}$.
\end{convention}

For an object $A$ in $\C$ and a subset $U$ of $\mathbf{n}$, the \emph{fiberwise join} $A\ast U$ is the homotopy colimit in $\C$ of the $\P_{\leq 1}(U)$-diagram that assigns to $\varnothing$ the object $A$ and assigns to each one-element set $\{i\}$ the terminal object $\ast_\C$. If $U$ is a one-element set, then $A\ast U$ is the simplicial cone on $A$, and in general, $A\ast U$ consists of $|U|$ copies of the simplicial cone on $A$ glued along $A$. For a fixed object $A$ in $\C$, $A\ast -$ defines a functor from $\P(\mathbf{n})$ to $\C$. This functor plays a key role in the definition of the functors $P_nF \colon \C \rightarrow \D$ appearing in Goodwillie's Taylor tower. 

\begin{definition} \label{Pndefn}
For any functor $F \colon \C \rightarrow \D$, define the functors $T_nF$ and $P_nF$ by 
\[ T_nF (A)= \underset{U \in \P_0(\mathbf{n+1})}{\holim} F (A \ast U) \]
and 
\[ P_nF(A) = \underset{k}{\hocolim} \ T_n^kF(A), \] 
where the homotopy colimit is over a sequential diagram obtained by repeatedly applying the transformation $F \xrightarrow{t_nF} T_nF$ induced by the canonical map $A=A\ast \varnothing \rightarrow A \ast U$ for each $U$. These maps also yield a natural transformation $p_nF \colon F\rightarrow P_nF$.
\end{definition} 

\begin{remark} \label{Pnproperties}
The functor $P_nF$ is the $n$-\emph{excisive approximation} of $F$.  More precisely, it is an $n$-\emph{excisive functor}, that is, it takes strongly homotopy cocartesian $(n+1)$-cubical diagrams in $\C$ to homotopy cartesian $(n+1)$-cubes in $\D$, and the induced natural transformation $p_nF \colon F\rightarrow P_nF$ is appropriately universal with respect to this property \cite[1.8]{goodwillie3}.  
\end{remark}

\subsection{Discrete functor calculus} \label{s:discretefc}

Abelian functor calculus is a functor calculus developed for algebraic settings that builds Taylor towers using certain classical constructions, such as the stable derived functors defined by Dold and Puppe \cite{dp} and stable homology of modules over a ring $R$ with coefficients in a ring $S$ as defined by Eilenberg and Mac Lane \cite{em1}, \cite{em2}, as the first  terms in its Taylor towers. It has been used to study rational algebraic $K$-theory, Hochschild homology, and other algebraic constructions, and to make connections between functor calculus and the tangent and cartesian differential categories of Blute, Cockett, Cruttwell, and Seely \cite{bjort}, \cite{jm1}, \cite{jm2}, \cite{km}.  Discrete functor calculus arose as an adaptation of abelian functor calculus for functors of simplicial model categories. In general, the $n$th term in the discrete Taylor tower satisfies a weaker degree $n$ condition than the corresponding term in the Goodwillie calculus, but the towers agree for functors that commute with realization \cite[4.11, 5.4]{bjm}.  

For this kind of functor calculus, we need the notion of a stable model category.  First, we recall that a model category is \emph{pointed} if it has a zero object, that is, if the initial and terminal objects coincide.  

\begin{definition} \label{stablemc} 
A model category $\D$ is \emph{stable} if it is pointed and its homotopy category is triangulated.
\end{definition}

We do not go into the details of triangulated categories here, but refer the reader to \cite[Ch.\ 7]{hovey} for an overview aimed at understanding stable model categories.  For our purposes, the most important feature of stable model categories is that homotopy pullback squares are the same as homotopy pushout squares.

The stability condition is used to guarantee the $n$th term in the discrete calculus tower for a functor is really a degree $n$ functor, a result that is straightforward using the agreement of homotopy pushout and pullback diagrams in stable model categories; see \cite[4.5, 4.6, 5.4]{bjm}.

\begin{convention} \label{conventiondiscrete} 
In addition to the assumptions on $\C$ and $\D$ from Convention \ref{convention}, we assume in this subsection that $\mathcal C$ has finite coproducts and a terminal object $\ast_{\C}$, and that $\D$ is a stable model category in the sense of Definition \ref{stablemc}.  In particular, $\D$ has a zero object that we denote by $\star_{\D}$. 
\end{convention}

As we did in the previous subsection, we  describe the construction of degree $n$ approximations in the discrete calculus as it was done originally in \cite{bjm}, and then, in the next subsection, explain what is needed to ensure that the terms in the discrete Taylor tower are simplicial functors.  

To define the $n$th term in the discrete Taylor tower, we use a comonad constructed from the iterated homotopy fiber of a particular $(n+1)$-cubical diagram.

\begin{definition}  
Let $\mathcal X \colon \P(\mathbf 2) \rightarrow \D$ be a 2-cubical diagram.  Its \emph{iterated homotopy fiber} is
\vspace{-2mm}
\[ \ifiber(\mathcal X) := \hofiber \left( \hofiber \left( \vcenter{\xymatrix{X_\varnothing \ar[d] \\ X_2}} \right) \rightarrow \hofiber \left( \vcenter{\xymatrix{X_1 \ar[d] \\ X_{12}}} \right) \right). \]
\end{definition}

In other words, we take the homotopy fibers of the two vertical maps, which produces the horizontal map between  those homotopy fibers; the \emph{iterated} fiber is just the homotopy fiber of this induced map.  The iterated fiber of an $n$-cubical diagram $\X \colon \P(\mathbf n) \rightarrow \D$ can be defined analogously.  More details can be found in \cite[3.1]{bjm} and \cite[\S 3.4, \S5.5]{mv}. 

For an object $A$ in $\C$ and a functor $F \colon \C\rightarrow \D$, let $F^{\mathbf{n}}(A,-)$ be the $\P(\mathbf{n})$-diagram that assigns to $U \subseteq \mathbf{n}$ the object
\[ F^{\mathbf{n}}(A,U):=F \left(\coprod _{i\in \mathbf{n}} A_i(U) \right), \]
where
\vspace{-2mm}
\begin{equation} \label{e:Ai}
A_i(U):= \begin{cases}
A & i\notin U, \\
\ast_\mathcal{C} & i\in U.
\end{cases}
\end{equation} 
Furthermore, given a functor $F \colon \C \rightarrow \D$, we define the functor $F^{\P_0(\mathbf{2})}_n \colon \C \times {\P_0(\mathbf{2})}^{\times n} \rightarrow \D$ by 
\vspace{-2mm}
\[ F^{\P_0(\mathbf{2})}_n \left(A;(S_1, \dots, S_n)\right):=
\begin{cases}
F^{\mathbf{n}}(A,\varphi(S_1, \dots, S_n)) & S_i\neq \{2\} \text{ for all } i, \\
\star_{\D} & \text{otherwise,}
\end{cases} \]
where 
\vspace{-2mm}
\begin{equation} \label{e:phi}
\varphi(S_1, \dots, S_n)=\{i\ |\ S_i=\{1,2\}\}.
\end{equation}

\begin{example}
The diagram $F_2^{\P_0(\mathbf{2})}(A,-)$ is given by the diagram 
\vspace{-2mm}
\begin{center}
\begin{tikzpicture}[node distance=3cm, auto, scale=1]

\node (A) {$F(A \amalg A)$};
\node (B) [right of=A] {$F(\ast_\C \amalg A)$};
\node (X) [right of=B] {$\star_{\D}$};
\node (C) [node distance=1.5cm, below of=A] {$F(A \amalg \ast_\C)$};
\node (D) [node distance=1.5cm, below of=B] {$F(\ast_\C \amalg \ast_\C)$};
\node (Y) [node distance=1.5cm, below of=X] {$\star_{\D}$};
\node (E) [node distance=1.5cm, below of=C] {$\star_{\D}$};
\node (F) [node distance=1.5cm, below of=D] {$\star_{\D}$};
\node (Z) [node distance=1.5cm, below of=Y] {$\star_{\D}$.};

\draw[->] (A) to node {} (B);
\draw[->] (A) to node {} (C);
\draw[->] (C) to node {} (D);
\draw[->] (B) to node {} (D);

\draw[->] (E) to node {} (C);
\draw[->] (F) to node {} (D);
\draw[->] (X) to node {} (B);
\draw[->] (Y) to node {} (D);

\draw[->] (E) to node {} (F);
\draw[->] (Z) to node {} (F);
\draw[->] (X) to node {} (Y);
\draw[->] (Z) to node {} (Y);
\end{tikzpicture}
\end{center}
\end{example}

\begin{definition} \label{perp} 
For any functor $F \colon \C \rightarrow \D$ and any object $A$ of $\C$, define
\[ \bot_{n} F(A):=\underset{(S_1, \dots, S_n)}{\holim} \, F^{\P_0(\mathbf{2})}_n \left(A; (S_1, \dots, S_n) \right), \] 
where the homotopy limit is taken over the category ${\P_0(\mathbf{2})}^{\times n}$.
\end{definition}

Observe that $\bot_2F(A)$ is the iterated homotopy fiber of the diagram
\begin{center}
\begin{tikzpicture}[node distance=3cm, auto, scale=1]

\node (A) {$F(A \amalg A)$};
\node (B) [right of=A] {$F(\ast_\C \amalg A)$};
\node (C) [node distance=1.5cm, below of=A] {$F(A \amalg \ast_\C)$};
\node (D) [node distance=1.5cm, below of=B] {$F(\ast_\C \amalg \ast_\C)$.};

\draw[->] (A) to node {} (B);
\draw[->] (A) to node {} (C);
\draw[->] (C) to node {} (D);
\draw[->] (B) to node {} (D);

\end{tikzpicture}
\end{center}
More generally, $\bot_nF(A)$ is equivalent to the iterated homotopy fiber of the $\P(\mathbf{n})$-diagram that assigns to the set $U\subseteq \mathbf{n}$ the object $F^{\mathbf n}(A,U)$; see \cite[\S 3.1]{bjm} for details. 

\begin{definition} \label{degreendefn}
A functor $F \colon \C\rightarrow \D$ is \emph{degree $n$} provided that for any $(n+1)$-tuple of objects $\mathbf{X}=(X_1, \dots, X_{n+1})$ in $\C$, the iterated homotopy fiber of the diagram 
\begin{center}
\begin{tikzpicture}[node distance=4cm, auto, scale=1]

\node (A) {${\mathbf{n+1}}\supseteq U$};
\node (B) [right of=A] {$F \left( \coprod_{i\in \mathbf{n+1}}\mathbf{X}_i(U) \right)$};

\draw[|->] (A) to node {} (B);

\end{tikzpicture}
\end{center}
where 
\[ \mathbf{X}_i(U)=\begin{cases}X_i&i\notin U,\\
\ast_\C &i\in U,
\end{cases} \]
is weakly equivalent to the terminal object in $\D$.  This iterated homotopy fiber is called the \emph{$(n+1)$st cross effect of $F$ at $\mathbf{X}$} and is denoted by $cr_{n+1}F(\mathbf{X})$. Note that when $\mathbf{X}=(X, \dots, X)$ for an object $X$ in $\C$, $cr_{n+1}F(\mathbf X)=\bot_{n+1}F(X)$.
\end{definition}

To define a  degree $n$ approximation, we note that $\bot_n$ can be given the additional structure of a comonad.  Recall that a \emph{comonad} $(\bot, \Delta, \varepsilon)$ acting on a category $\A$ consists of an endofunctor $\bot \colon \A \rightarrow \A$ together with natural transformations $\Delta \colon \bot\rightarrow \bot\bot$ and $\varepsilon \colon \bot\rightarrow \id_\A$ satisfying certain identities.  For an object $A$ in $\A$, there is an associated simplicial object $\bot^* A$ given by
\vspace{-2mm}
\begin{center}
\begin{tikzpicture}[node distance=2.5cm, auto, scale=1]

\node (A) {$[k]$};
\node (B) [transform canvas={yshift=0.2ex}, right of=A] {$\bot^{k+1}A$};

\draw[|->] (A) to node {} (B);

\end{tikzpicture}
\end{center}
whose face and degeneracy maps are defined using the natural transformations $\varepsilon$ and $\Delta$.  See \cite[\S 8.6]{weibel} for more details, noting that the author uses the term ``cotriple" for what we are calling a ``comonad" here.  

The indexing category $\P_0(\mathbf{2})^{\times n}$ used for the homotopy limit that defines the functor $\bot_n$ induces both a comultiplication $\Delta_n$ and a counit $\varepsilon_n$. Together with the natural transformations $\Delta_n$ and $\varepsilon_n$, $\perp_n$ defines a  comonad.  Proving that $\perp_n$ is a comonad requires proving that certain diagrams are strictly commutative.  Since $\perp_n$ is defined using a homotopy limit and different models for homotopy limits may only agree up to weak equivalence, one needs to fix a choice of model for homotopy limits.  For the proof that $\perp_n$ is a comonad in \cite[\S 3]{bjm}, one also needs to ensure that this model satisfies certain standard properties, which are listed in Appendix A of \cite{bjm},  up to isomorphism, rather than weak equivalence.  As shown in \cite[App.\ A]{bjm}, the standard model for homotopy limits  in \cite[18.1.8]{hirschhorn} satisfies these conditions.  

Once one has established the existence of a comonad structure for $\perp_n$ with this choice of model for the homotopy limit, and shown that the functors $\Gamma_nF$ in Definition \ref{degreen} are in fact degree $n$, one can use a different model for homotopy colimit to construct $\Gamma_nF$, provided it is weakly equivalent to the original. Thus we can make the following definition.  

\begin{definition} \label{degreen}
For a functor $F \colon \C \rightarrow \D$, the \emph{degree $n$ approximation} of $F$ at an object $A$ is given by 
\[ \Gamma_n F(A):= \operatorname{hocofiber}\left(|\perp_{n+1}^{*+1} F(A)| \rightarrow F(A)\right), \] 
where $\perp_{n+1} F(A)$ is as in Definition \ref{perp}, and $\vert \bot_{n+1}^{\ast+1}F(A)\vert$ is the homotopy colimit over $\Deltaop$ of the standard simplicial object associated to the comonad $\bot_{n+1}$ acting on $F$. 
\end{definition}

\begin{remark} 
The functor $\Gamma_n F$ is degree $n$ in the sense of Definition \ref{degreendefn}, which is a weaker condition than the one for $n$-excisive functors, satisfied by the $n$th term in Goodwillie's Taylor tower.  However, the proof that $\Gamma_nF$ is degree $n$ uses a standard extra degeneracy argument for simplicial objects, whereas Goodwillie describes his proof that $P_nF$ is $n$-excisive as ``somewhat opaque" \cite[1.10]{goodwillie3}. As is the case with the Goodwillie tower, one can show that there is a natural transformation $F\rightarrow \Gamma_nF$ that is universal (up to homotopy) among degree $n$ functors with natural transformations from $F$ \cite[\S 5]{bjm}.
\end{remark}

\subsection{Polynomial approximations for simplicial functors}

To establish the existence of the $n$-excisive and degree $n$ model structures in Sections \ref{n-excisive model} and \ref{Degreenmodel}, we need to know that $P_nF$ and $\Gamma_nF$ are simplicial functors when $F$ is, that is, that $P_nF$ and $\Gamma_nF$ are objects in $\Fun(\C,\D)$ as we have defined it. We also need to know that $P_n$ and $\Gamma_n$ preserve weak equivalences in $\Fun(\C,\D)$.  These conditions require knowing that the models we use for the homotopy limit and homotopy colimit in $\D$ preserve weak equivalences and are suitably simplicial. 

\begin{remark} \label{c:holimPn}  
Here, we use the models for homotopy limits and homotopy colimits for simplicial model categories as described in \cite[18.1.2, 18.1.8]{hirschhorn}. To guarantee that our homotopy limits and colimits preserve levelwise weak equivalences, which need not be the case for these models \cite[Ch.\ 18]{hirschhorn}, we use simplicial fibrant and cofibrant replacement functors.   Such replacements exist for any cofibrantly generated simplicial model category by \cite[24.2]{shulman}, so it is safe to assume that there are models for homotopy limits and homotopy colimits in $\D$ that are simplicial and preserve levelwise weak equivalences of diagrams.
\end{remark}

We conclude the following.

\begin{prop}
For a simplicial functor $F$ in $\Fun(\C,\D)$, both $P_nF$  and $\Gamma_nF$ are simplicial functors.
\end{prop}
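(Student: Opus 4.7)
The plan is to observe that both $P_nF$ and $\Gamma_nF$ are built from $F$ by a finite sequence of operations that each preserve simplicial enrichment, namely: composition with simplicial endofunctors of $\C$, composition with a simplicial functor (namely $F$), and formation of homotopy limits and homotopy colimits using the simplicial models from Remark \ref{c:holimPn}. The enabling fact throughout is that, with the chosen Hirschhorn models (composed with simplicial fibrant/cofibrant replacement in $\D$), the homotopy (co)limit of a simplicially enriched diagram is itself a simplicial functor in any variable on which the diagram depends simplicially.

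For $P_nF$, first fix $U \subseteq \mathbf{n+1}$. The fiberwise join $A \mapsto A \ast U$ is a finite homotopy colimit in the simplicial category $\C$ of a diagram that depends simplicially on $A$ (it sends $\varnothing$ to $A$ and each singleton to the constant $\ast_\C$), so it defines a simplicial functor $\C \to \C$. Post-composing with the simplicial $F$ yields a simplicial functor $A \mapsto F(A \ast U)$, and letting $U$ vary gives a simplicial functor $\C \to \Fun(\P_0(\mathbf{n+1}), \D)$. Applying the simplicial homotopy limit functor produces $T_nF \in \Fun(\C,\D)$ as a simplicial functor. The structure map $t_nF \colon F \Rightarrow T_nF$ is a simplicial natural transformation because it is induced by the simplicial maps $A = A \ast \varnothing \to A \ast U$, so iterating yields a simplicial sequential diagram $F \to T_nF \to T_n^2 F \to \cdots$ in $\Fun(\C,\D)$. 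Since the sequential homotopy colimit is also taken with a simplicial model, $P_nF = \hocolim_k T_n^k F$ is a simplicial functor.

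For $\Gamma_nF$ the argument is parallel. For each tuple $(S_1,\dots,S_n)$, the value $F^{\mathbf n}(A, \varphi(S_1,\dots,S_n)) = F\bigl(\coprod_i A_i(\varphi(S_1,\dots,S_n))\bigr)$ depends simplicially on $A$ (coproducts and evaluation of $F$ are simplicial, and $A_i(U)$ is either $A$ or the constant $\ast_\C$), with $\star_\D$ in the remaining entries; taking the simplicial homotopy limit over $\P_0(\mathbf 2)^{\times n}$ shows that $\bot_{n+1}F$ is a simplicial endofunctor of $\Fun(\C,\D)$. Because the comonad structure maps $\Delta_{n+1}$ and $\varepsilon_{n+1}$ are simplicial natural transformations, the associated simplicial object $\bot_{n+1}^{\ast + 1} F$ is a simplicial object in $\Fun(\C,\D)$. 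Its realization is a homotopy colimit over $\Deltaop$, again computed with a simplicial model, and the final homotopy cofiber defining $\Gamma_n F$ is a simplicial homotopy pushout, so $\Gamma_n F$ is a simplicial functor.

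The only real obstacle is verifying that our chosen models for homotopy (co)limits are genuinely simplicial on the diagram category; this is precisely the content of Remark \ref{c:holimPn}, after which the rest of the proof is routine bookkeeping that simplicial structure propagates through the compositions above.
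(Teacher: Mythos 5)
Your proposal is correct and follows essentially the same route as the paper: decompose $P_nF$ (respectively $\Gamma_nF$) into fiberwise joins, postcomposition with the simplicial functor $F$, and homotopy (co)limits taken with the simplicial models of Remark \ref{c:holimPn}, and observe that each piece preserves simplicial enrichment. The only cosmetic difference is that you argue the simpliciality of the fiberwise join directly as a finite simplicial homotopy colimit in the ambient category, where the paper instead cites \cite[7.15]{proc}.
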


\begin{proof}
As defined, the functor $P_nF$ is a composite of fiberwise joins, the original functor $F$, homotopy limits, and a sequential homotopy colimit. Each of these components is simplicial.  In the case of the fiberwise joins, we can use \cite[7.15]{proc}.  The other cases are consequences of the assumptions we have made.  A similar argument gives us the result for $\Gamma_nF$. 
\end{proof}

We conclude this section by noting that using models for homotopy (co)limits that involve precomposition with (co)fibrant replacement functors affect the natural transformations from $F$ to $P_nF$ and $\Gamma_nF$; for example, we may only have  natural transformations from the cofibrant replacement of $F$ to $P_nF$ and $\Gamma_nF$.  We describe how to circumvent such issues in Sections \ref{n-excisive model} and \ref{Degreenmodel}.

\section{Bousfield's $Q$-Theorem} \label{s:QTheorem}  

In this section, we recall the approach to localization that we use in this paper, due to Bousfield and Friendlander.  Because the basic input is a model category equipped with an endofunctor $Q$, and we use Bousfield's later variant, we  refer to this result simply as ``Bousfield's $Q$-Theorem".   As is typical for a localization, we would like to produce a new model structure with more weak equivalences; we begin by defining the weak equivalences that we obtain from the endofunctor $Q$.

\begin{definition}
Let $\M$ be a model category, together with an endofunctor $Q \colon \M \rightarrow \M$.  A morphism $f$ in $\M$ is a \emph{$Q$-equivalence} if it is a weak equivalence after applying the functor $Q$; that is, if $Q(f)$ is a weak equivalence in $\M$.  
\end{definition}

We now state Bousfield's $Q$-Theorem.

\begin{theorem} \cite[9.3, 9.5, 9.7]{bousfield} \label{BousfieldQ} 
Let $\M$ be a right proper model category together with an endofunctor $Q \colon \M \rightarrow \M$ and a natural transformation $\eta \colon \id \Rightarrow Q$ satisfying the following axioms:
\begin{itemize}
\item[(A1)] the endofunctor $Q$ preserves weak equivalences in $\M$;

\item[(A2)] the maps $\eta_{QX}, Q(\eta_X) \colon QX \rightarrow Q^2X$ are both weak equivalences in $\M$; and

\item[(A3)] given a pullback square 
\vspace{-2mm}
\begin{center}
\begin{tikzpicture}[node distance=1.8cm, auto, scale=0.5]

\node (A) {$V$};
\node (B) [right of=A] {$X$};
\node (C) [node distance=1.5cm, below of=A] {$W$};
\node (D) [node distance=1.5cm, below of=B] {$Y$};

\draw[->] (A) to node {$f$} (B);
\draw[->] (A) to node {} (C);
\draw[->] (C) to node [swap] {$g$} (D);
\draw[->] (B) to node {$h$} (D);

\end{tikzpicture}
\end{center}
if $X$ and $Y$ are fibrant, $h \colon X \rightarrow Y$ is a fibration, $\eta_X \colon X \rightarrow QX$ and $\eta_Y \colon Y \rightarrow QY$ are weak equivalences, and $g \colon W \rightarrow Y$ is a $Q$-equivalence, then $f \colon V \rightarrow X$ is a $Q$-equivalence.
\end{itemize}
Then there exists a right proper model structure, denoted by $\M_Q$, on the same underlying category as $\M$ with weak equivalences the $Q$-equivalences and the same cofibrations as $\M$.  Furthermore, if $\M$ has the structure of a simplicial model category, then so does $\M_Q$. 
\end{theorem}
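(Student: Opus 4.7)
The proof follows Bousfield's strategy in \cite[\S 9]{bousfield}. The plan is to declare the cofibrations of $\M_Q$ to coincide with those of $\M$, the weak equivalences to be the $Q$-equivalences, and the fibrations to be the maps with the right lifting property against cofibrations that are also $Q$-equivalences; then to establish a workable concrete characterization of these fibrations from which the remaining axioms can be verified. The two-out-of-three and retract axioms for the $Q$-equivalences are immediate from axiom (A1) together with the corresponding axioms in $\M$.

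The crucial technical step is the following characterization: a morphism $f \colon X \to Y$ is a $Q$-fibration if and only if $f$ is a fibration in $\M$ and the naturality square
\[
\xymatrix{ X \ar[r]^{\eta_X} \ar[d]_{f} & QX \ar[d]^{Qf} \\ Y \ar[r]_{\eta_Y} & QY }
\]
is a homotopy pullback in $\M$. Axiom (A2) ensures in particular that $QX$ is $Q$-fibrant whenever $X$ is $\M$-fibrant, and by Proposition \ref{properwe}(\ref{properweb}) the $Q$-fibrant objects are precisely those $X$ for which $\eta_X$ is a weak equivalence. Axiom (A3) combined with Proposition \ref{properwe}(\ref{properwea}) produces the required lifts that bridge the concrete description to the lifting definition, and Proposition \ref{properwe}(\ref{properweb}) together with (A1) shows that any map which is both a $Q$-fibration and a $Q$-equivalence is already an acyclic fibration in $\M$. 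Consequently, the acyclic $Q$-fibrations coincide with the acyclic fibrations of $\M$.

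The factorization axioms follow from this characterization: any morphism factors as a cofibration followed by an acyclic fibration in $\M$, which by the previous step is an acyclic $Q$-fibration. For the factorization as an acyclic $Q$-cofibration followed by a $Q$-fibration, one first factors in $\M$ as an acyclic cofibration followed by a fibration, and then performs a pullback correction along a $Q$-fibrant replacement of the codomain so that the resulting naturality square is a homotopy pullback; axiom (A3) is what ensures that the first factor remains a $Q$-equivalence after this correction. Lifting is then automatic from the characterization together with lifting in $\M$, right properness of $\M_Q$ follows from right properness of $\M$ together with Propositions \ref{pullbackcomposite} and \ref{properwe}, and the simplicial structure transfers because the cofibrations are unchanged and the acyclic cofibrations of $\M$ remain acyclic cofibrations in $\M_Q$, so axiom (SM7') of Remark \ref{tensorcotensorrmk} is inherited directly from $\M$. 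The main obstacle is the second step: establishing the concrete characterization of $Q$-fibrations and carrying out the pullback-correction factorization that depends on it. Axiom (A3) is precisely the ingredient that makes both go through; without it the characterization would fail to produce enough fibrations to admit the required factorizations.
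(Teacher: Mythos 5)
A preliminary remark: the paper does not prove Theorem \ref{BousfieldQ} at all; it is imported from Bousfield \cite[9.3, 9.5, 9.7]{bousfield} (compare also \cite{bf} and \cite[X.4]{gj}), so your proposal is really being compared with Bousfield's argument rather than with anything in this text. On the level of the bare model structure your outline is the standard one and is essentially sound: define $Q$-fibrations by lifting against $Q$-acyclic cofibrations, establish the characterization of $Q$-fibrations as $\M$-fibrations whose $\eta$-naturality square is a homotopy pullback, deduce that the acyclic $Q$-fibrations are exactly the acyclic fibrations of $\M$, get the second factorization by an (A3)-controlled pullback correction along a fibrant replacement of $Q$ applied to the map, and obtain right properness from that of $\M$. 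Two small inaccuracies: the $Q$-fibrant objects are the $\M$-fibrant objects $X$ with $\eta_X$ a weak equivalence (fibrancy in $\M$ cannot be dropped, and the paper's Proposition \ref{QpresWE}(2) also needs $\eta_\ast$ to be an equivalence for the converse), and the implication ``$Q$-fibration and $Q$-equivalence implies acyclic fibration in $\M$'' uses Proposition \ref{properwe}(\ref{properwea}) rather than (\ref{properweb}).

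The genuine gap is the simplicial assertion. Axiom (SM7') for $\M_Q$ is not ``inherited directly from $\M$'' from the observation that acyclic cofibrations of $\M$ remain acyclic in $\M_Q$: that observation only handles the case where the simplicial-set cofibration $i \colon K \rightarrow L$ is a weak equivalence (then the pushout product is already an acyclic cofibration in $\M$, hence a $Q$-equivalence by (A1)), or where the $\M_Q$-acyclic cofibration happens to be acyclic in $\M$. The essential new case is a cofibration $j$ that is a $Q$-equivalence but not a weak equivalence in $\M$; there one must show that the pushout product $j$ with $i$ is again a $Q$-equivalence, or equivalently that the cotensor corner map of a $Q$-fibration along $i$ is again a $Q$-fibration. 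This does not follow formally from (A1)--(A3), because $Q$ is an abstract endofunctor with no assumed compatibility with the tensor or cotensor over simplicial sets, so one cannot simply compare $Q(X^L)$ with $(QX)^L$ or argue levelwise. This case is precisely the content of the simplicial part of Bousfield's theorem and requires its own argument (built on the $Q$-fibration characterization and properness); as written, your sketch asserts it without proof, and that assertion is the one step of the theorem that would not survive scrutiny.
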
  

The next two results provide  modifications of the hypotheses of this theorem.  The first shows that it suffices to check axiom (A2) on fibrant objects, while the second gives us a way to show that (A3) is satisfied.

\begin{cor} \label{fibrantA2} 
Let $\M$ be a right proper model category together with an endofunctor $Q \colon \M \rightarrow \M$ and natural transformation $\eta \colon \id \Rightarrow Q$ satisfying axiom (A1).  If (A2) holds for all fibrant objects in $\M$, then (A2) holds for all objects in $\M$.
\end{cor}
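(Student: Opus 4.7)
The plan is to reduce the general case to the fibrant case using a fibrant replacement together with the naturality of $\eta$ and axiom (A1). Given an arbitrary object $X$ in $\M$, I would first choose a weak equivalence $j \colon X \to \widetilde{X}$ with $\widetilde{X}$ fibrant, using the factorization axioms of $\M$.

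Next, I would exploit the naturality square of $\eta$ on $j$, namely the commutative diagram with top row $X \to \widetilde{X}$, bottom row $QX \to Q\widetilde{X}$, and vertical maps $\eta_X$, $\eta_{\widetilde{X}}$. Applying $Q$ once more yields a commutative square whose horizontal arrows $QX \to Q\widetilde{X}$ and $Q^2X \to Q^2\widetilde{X}$ are weak equivalences by (A1), and whose right-hand vertical arrow $Q(\eta_{\widetilde{X}}) \colon Q\widetilde{X} \to Q^2\widetilde{X}$ is a weak equivalence by the fibrant case of (A2). The two-out-of-three property then forces the left-hand vertical $Q(\eta_X)$ to be a weak equivalence as well.

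For the map $\eta_{QX}$, I would use the naturality square of $\eta$ applied to the morphism $Q(j) \colon QX \to Q\widetilde{X}$: its horizontal arrows are $QX \to Q\widetilde{X}$ and $Q^2X \to Q^2\widetilde{X}$, both weak equivalences by (A1) applied to the weak equivalence $j$ (and then once more to $Q(j)$, which is itself a weak equivalence by (A1)). The right-hand vertical $\eta_{Q\widetilde{X}}$ is a weak equivalence by the fibrant case of (A2) applied to $\widetilde{X}$. Two-out-of-three then yields that $\eta_{QX}$ is a weak equivalence, completing the verification of (A2) for the arbitrary object $X$.

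There is no real obstacle here beyond bookkeeping; the only subtle point is making sure that (A1) is invoked twice, once to promote $j$ to a weak equivalence $Q(j)$ and again to promote $Q(j)$ to a weak equivalence $Q^2(j)$, so that both horizontal arrows in each of the two naturality squares are weak equivalences. Once that is observed, the argument is a direct application of two-out-of-three.
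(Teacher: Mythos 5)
Your proposal is correct and matches the paper's proof essentially verbatim: both arguments take a fibrant replacement $X \to X'$, apply $Q$ to the naturality square of $\eta$ (resp.\ use the naturality square of $\eta$ on $Q$ of the replacement map) so that axiom (A1) makes the horizontals weak equivalences and the fibrant case of (A2) handles the right vertical, and conclude by two-out-of-three.
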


\begin{proof}
Let $X$ be an object in $\M$ and let $\beta \colon X\xrightarrow{\simeq} X'$ be a fibrant replacement.  By (A1), we know that $Q\beta \colon QX\rightarrow QX'$ and $Q^2\beta \colon Q^2X\rightarrow Q^2X'$ are weak equivalences.  Consider the commutative diagram 
\vspace{-3mm}
\begin{center}
\begin{tikzpicture}[node distance=3cm, auto, scale=1]

\node (A) {$QX$};
\node (B) [right of=A] {$QX'$};
\node (C) [node distance=1.8cm, below of=A] {$Q^2X$};
\node (D) [node distance=1.8cm, below of=B] {$Q^2X'$.};

\draw[->] (A) to node {$Q\beta$} (B);
\draw[->] (A) to node [swap] {$Q\eta_X$} (C);
\draw[->] (C) to node [swap] {$Q^2 \beta$} (D);
\draw[->] (B) to node {$Q \eta_{X'}$} (D);

\end{tikzpicture}
\end{center}
\vspace{-2mm}
obtained by applying $Q$ to the naturality diagram for $\eta$. 
Since $X'$ is fibrant, the right vertical arrow is a weak equivalence by our assumption that (A2) holds for fibrant objects.  As the horizontal arrows are also weak equivalences, we see that $Q\eta_X$ is also a weak equivalence. The fact that $\eta_{QX}$ is a weak equivalence is proved in a similar fashion by replacing the vertical maps in the diagram above with $\eta_{QX}$ and $\eta_{QX'}$, respectively. 
\end{proof}

\begin{cor} \label{simplifiedA3} 
Let $\M$ be a right proper model category together with an endofunctor $Q \colon \M \rightarrow \M$ and natural transformation $\eta \colon \id \Rightarrow Q$ satisfying axioms (A1) and (A2) above. If
\begin{itemize}
    \item[(A3')] $Q$ preserves homotopy pullback squares,
\end{itemize} 
then axiom (A3) holds, and in particular the model structure $\M_Q$ exists and is right proper.
\end{cor}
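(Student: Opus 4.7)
The plan is to derive (A3) directly from (A3') using the foundational facts about homotopy pullbacks already compiled in Section~\ref{s:smcbackground}. The key conceptual point is that (A3') is considerably cleaner than (A3): it isolates the essential content (namely, that $Q$ preserves homotopy pullbacks), whereas (A3) bundles this content together with auxiliary hypotheses whose purpose is only to promote the given ordinary pullback to a homotopy pullback.

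Given a pullback square as in the hypothesis of (A3), with $h \colon X \to Y$ a fibration between fibrant objects, I would first observe that this pullback square is automatically a homotopy pullback square. This is the second half of Proposition~\ref{homotopyinvariancepullback}: in a right proper model category, a pullback of a diagram in which one leg is a fibration is a homotopy pullback.

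Applying $Q$ and invoking (A3') then yields a homotopy pullback square with corners $QV$, $QX$, $QW$, $QY$, whose horizontal maps are $Qf$ and $Qg$ and whose right vertical map is $Qh$. By hypothesis $g$ is a $Q$-equivalence, so $Qg$ is a weak equivalence in $\M$. Since the notion of a homotopy pullback square is symmetric in its two axes, I would then apply Proposition~\ref{properwe}(\ref{properwea}) to this square, viewing $Qf$ and $Qg$ as the parallel pair (in the roles of the maps $X \to U$ and $Y \to V$ in the statement of the proposition): a weak equivalence on one side of the parallel pair forces a weak equivalence on the other. Hence $Qf$ is a weak equivalence, i.e., $f$ is a $Q$-equivalence, which verifies (A3). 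The existence and right properness of $\M_Q$ then follow from Theorem~\ref{BousfieldQ}.

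I do not anticipate any serious obstacle; the only bookkeeping point is correctly matching the sides of the $Q$-square with the hypothesis of Proposition~\ref{properwe}(\ref{properwea}). It is worth noting that the hypotheses in (A3) requiring $\eta_X$ and $\eta_Y$ to be weak equivalences are not used in this derivation, which reinforces the observation that (A3') is a strictly more transparent axiom than (A3).
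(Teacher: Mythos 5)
Your proposal is correct and follows essentially the same route as the paper's proof: use right properness (Proposition \ref{rightproperpullback}) to see the given square is a homotopy pullback, apply (A3') to obtain a homotopy pullback of the $Q$-images, and conclude that $Qf$ is a weak equivalence from $Qg$ via Proposition \ref{properwe}. Your side remarks, including the transposition needed to apply Proposition \ref{properwe}(\ref{properwea}) and the observation that the hypotheses on $\eta_X$ and $\eta_Y$ go unused, are consistent with the paper's argument.
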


\begin{proof} 
We note that if $Q$ preserves fibrations, this statement is a direct consequence of the right properness condition. To see why it holds more generally, assume that $Q$ preserves homotopy pullback squares and that we have a commutative square as in Theorem \ref{BousfieldQ} satisfying the hypotheses of (A3).  Since one of those hypotheses is that $h \colon X\rightarrow Y$ is a fibration, and $\M$ is right proper, such a square is necessarily a homotopy pullback square by Proposition \ref{rightproperpullback}.  By (A3'), the diagram
\vspace{-2mm}
\begin{center}
\begin{tikzpicture}[node distance=2.5cm, auto, scale=1]

\node (A) {$QV$};
\node (B) [right of=A] {$QX$};
\node (C) [node distance=1.7cm, below of=A] {$QW$};
\node (D) [node distance=1.7cm, below of=B] {$QY$};

\draw[->] (A) to node {$Qf$} (B);
\draw[->] (A) to node {} (C);
\draw[->] (C) to node [swap] {$Qg$} (D);
\draw[->] (B) to node {$Qh$} (D);

\end{tikzpicture}
\end{center}
\vspace{-2mm}
is a homotopy pullback square.  Again by the hypotheses of (A3), $Qg$ is a weak equivalence, and hence $Qf$ is a weak equivalence by Proposition \ref{properwe}, establishing (A3).
\end{proof} 

For ease of reference, we make the following definition. 

\begin{definition} \label{Bousenddef}
    A \emph{Bousfield endofunctor} $Q \colon \mathcal M \rightarrow \mathcal M$ is an endofunctor together with a natural transformation $\eta \colon \id \Rightarrow Q$ satisfying axioms (A1) and (A2) from Theorem \ref{BousfieldQ} and axiom (A3') from Corollary \ref{simplifiedA3}.
\end{definition}

We refer to the fibrations in $\M_Q$, which are completely determined by the cofibrations and weak equivalences as described in Theorem \ref{BousfieldQ}, as $Q$-\emph{fibrations}.  We include the following useful characterizations of them, the first of which was part of Bousfield's original statement of Theorem \ref{BousfieldQ}.  The second part of this result is a consequence of the first via Proposition \ref{properwe}. Some similar results are proved in \cite[X.4]{gj} under the further assumption of left properness. 

\begin{prop} \cite[9.3]{bousfield} \label{Qfibrations} \label{QpresWE}
Let $\M$ be a right proper model category together with an endofunctor $Q \colon \M \rightarrow \M$ and natural transformation $\eta \colon \id \Rightarrow Q$ satisfying axioms (A1), (A2), and (A3).
\begin{enumerate}[(1)]
\item A map $f \colon X \rightarrow Y$ in $\M$ is a $Q$-fibration if and only if it is a fibration in $\M$ and the induced diagram
\vspace{-2mm}
\begin{center}
\begin{tikzpicture}[node distance=2.2cm, auto, scale=1]

\node (A) {$X$};
\node (B) [right of=A] {$QX$};
\node (C) [node distance=1.7cm, below of=A] {$Y$};
\node (D) [node distance=1.7cm, below of=B] {$QY$};

\draw[->] (A) to node {$\eta_X$} (B);
\draw[->] (A) to node [swap] {$f$} (C);
\draw[->] (C) to node [swap] {$\eta_Y$} (D);
\draw[->] (B) to node {$Qf$} (D);

\end{tikzpicture}
\end{center}
\vspace{-2mm}
is a homotopy pullback square in $\M$. 

\item Assume that $\eta_\ast \colon \ast\rightarrow Q(\ast)$ is a weak equivalence where $\ast$ is the terminal object in $\M$. Then an object $X$ of $\M$ is $Q$-fibrant if and only if it is fibrant in $\M$ and the map $\eta_X \colon X \rightarrow QX$ is a weak equivalence in $\M$.
\end{enumerate}
\end{prop}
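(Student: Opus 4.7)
The plan is to treat the two parts of the proposition separately. Part (1) is simply Bousfield's characterization of the $Q$-fibrations from \cite[9.3]{bousfield}; since our axioms (A1)--(A3) match his hypotheses verbatim, I would invoke this result and no further work is required for this part.

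For part (2), I would specialize part (1) to the unique morphism $f \colon X \to \ast$, where $\ast$ is the terminal object of $\M$. Since $X$ is $Q$-fibrant precisely when $f$ is a $Q$-fibration, part (1) tells us that $X$ is $Q$-fibrant if and only if $X$ is fibrant in $\M$ and the commutative square
\begin{center}
\begin{tikzpicture}[node distance=2.4cm, auto]
\node (A) {$X$};
\node (B) [right of=A] {$QX$};
\node (C) [node distance=1.5cm, below of=A] {$\ast$};
\node (D) [node distance=1.5cm, below of=B] {$Q(\ast)$};
\draw[->] (A) to node {$\eta_X$} (B);
\draw[->] (A) to node [swap] {$f$} (C);
\draw[->] (C) to node [swap] {$\eta_\ast$} (D);
\draw[->] (B) to node {$Qf$} (D);
\end{tikzpicture}
\end{center}
is a homotopy pullback in $\M$.

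Because a homotopy pullback depends only on its underlying cospan up to the obvious symmetry, this square remains a homotopy pullback after transposing its rows and columns, that is, when read with $\eta_X$ and $\eta_\ast$ as its vertical arrows. Using the hypothesis that $\eta_\ast \colon \ast \to Q(\ast)$ is a weak equivalence, I would then finish by applying Proposition \ref{properwe}: part (2) of that proposition shows that if $\eta_X$ is also a weak equivalence then the transposed square is a homotopy pullback, while part (1) gives the converse, namely that if the square is a homotopy pullback then $\eta_X$ must be a weak equivalence since $\eta_\ast$ is. Combining these two implications yields the claimed characterization of $Q$-fibrant objects.

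The argument is quite direct: the heavy lifting (Bousfield's theorem and the pullback recognition criteria of Proposition \ref{properwe}) is already in place, so the only point requiring care is the bookkeeping around the transposition step. This step is legitimate because a homotopy pullback is defined by a limit whose indexing shape is symmetric under swapping the two incoming legs of the cospan, so no real obstacle remains.
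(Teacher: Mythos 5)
Your proposal is correct and matches the paper's approach: part (1) is taken directly from Bousfield's statement, and part (2) is deduced from part (1) by specializing to $X \to \ast$ and applying both halves of Proposition \ref{properwe} (the paper only sketches this in a sentence, and your transposition bookkeeping fills in exactly the intended details).
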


We refer to the homotopy pullback condition in the first part of the corollary as the $Q$-\emph{fibration condition}, for ease of referring to it in later sections.

\section{The homotopy functor model structure} \label{fcexamples} \label{s:nexmodel} 

In this section, we apply Theorem \ref{BousfieldQ} to obtain a model structure on $\Fun(\C,\D)$ in which the fibrant objects are homotopy functors, or functors that preserve weak equivalences.  This model structure was established in \cite{br}, but we treat it in some detail so that we can refer to the techniques in more generality. While it is a first step in developing a model structure for $n$-excisive functors, this example is also key because it requires us to look closely at many of the ingredients used.   

We begin by identifying the necessary assumptions on the categories $\C$ and $\D$ for such a model structure to exist, and one of those conditions requires the existence of a well-behaved simplicial fibrant replacement functor.  While fibrant replacements exist in any model category, and they can be taken to be functorial in nice cases such as cofibrantly generated model structures, it is important for our purposes that they also preserve the simplicial structure. So, our first goal is to describe such a functor, for which we recall the following definition.  Although we assume we are working in a model category, since that is the context in which we use it, this definition can be applied to any cocomplete category. 

\begin{definition} \label{soadefn} 
Let $\M$ be a model category. A set $J$ of maps in $\M$ \emph{permits the small object argument} if there exists a cardinal $\kappa$ such that for every regular cardinal $\lambda \geqslant \kappa$ and every $\lambda$-sequence 
\begin{center}
\begin{tikzpicture}[node distance=1.8cm, auto]

\node (A) {$A_0$};
\node (B) [right of=A] {$A_1$};
\node (C) [right of=B] {$A_2$};
\node (D) [right of=C] {$\hdots$};
\node (E) [right of=D] {$A_{\beta}$};
\node (F) [right of=E] {$\hdots$};
\node (G) [right of=F] {$(\beta < \lambda)$};

\draw[->] (A) to node {} (B);
\draw[->] (B) to node {} (C);
\draw[->] (C) to node {} (D);
\draw[->] (D) to node {} (E);
\draw[->] (E) to node {} (F);

\end{tikzpicture} 
\end{center}
in $\B$ such that each $A_{\beta} \rightarrow A_{\beta + 1}$ is a transfinite composition of pushouts of maps in $J$, the map 
\vspace{-2mm}
\begin{center}
\begin{tikzpicture}[node distance=5cm, auto]
\node (A) {$\underset{\beta < \lambda}{\colim} \, \Hom_{\M} (U, A_{\beta})$};
\node (B) [right of=A] {};

\node (c) [node distance=1mm, above of=B] {$\Hom_{\M} \left(U, \underset{\beta < \lambda}{\colim} \, A_{\beta}\right)$};
\node (d) [node distance=1.9cm, left of=B] {};
\draw[transform canvas={yshift=0.6ex}, ->] (A) to node {} (d);

\end{tikzpicture} 
\end{center}
is an isomorphism whenever $U$ is the domain of some map in $J$.
\end{definition}

\begin{remark} \label{remark:Small object argument} 
Note that if $J$ permits the small object argument and $U$ is in the domain of some map in $J$, then any map $U \rightarrow \underset{\beta < \lambda}{\colim} \, A_{\beta}$ factors through some $A_{\alpha}$. 
\end{remark}

The following result is due to Shulman \cite[24.2]{shulman}, but since this case is not proved in full detail in that paper, and we need to use some of the techniques elsewhere, we give a brief outline of the proof here.

\begin{theorem} \label{SimpFibRep}
There exists a simplicial fibrant replacement functor on any cofibrantly generated simplicial model category. 
\end{theorem}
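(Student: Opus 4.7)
The plan is to carry out the standard small object argument of Quillen, but in a simplicially enriched form so that the output is a simplicial functor and the unit transformation $X \to \mathrm{Fib}(X)$ is a simplicial natural transformation. Let $J$ be a generating set of acyclic cofibrations in $\M$ permitting the small object argument, and write a typical element of $J$ as $j\colon A_j \to B_j$.

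First I would define a single enriched step of the construction: for each object $X$ in $\M$, let $\Phi(X)$ be the pushout
\[
\Phi(X) := \Bigl( \coprod_{j \in J} B_j \otimes \Map_\M(A_j, X) \Bigr) \amalg_{\coprod_{j} A_j \otimes \Map_\M(A_j, X)} X,
\]
where the left leg of the pushout is induced by $j \otimes \id$ and the right leg by the evaluation maps $A_j \otimes \Map_\M(A_j, X) \to X$ coming from axiom (SM6). Because $\Map_\M(A_j, -)$ is a simplicial functor and the tensor $B_j \otimes -$ together with coproducts and pushouts are simplicially functorial, $\Phi$ is a simplicial endofunctor and the canonical map $\eta_X\colon X \to \Phi(X)$ is a simplicial natural transformation. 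By axiom (SM7'), and the fact that every simplicial set is cofibrant, each map $A_j \otimes \Map_\M(A_j, X) \to B_j \otimes \Map_\M(A_j, X)$ is an acyclic cofibration, hence $\eta_X$ is an acyclic cofibration as a pushout of a coproduct of acyclic cofibrations.

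Next I would iterate $\Phi$ transfinitely. Choose a regular cardinal $\lambda \geq \kappa$ (with $\kappa$ as in Definition~\ref{soadefn}) large enough that every domain $A_j$ is $\lambda$-small. Set $\Phi^{(0)} = \id$, $\Phi^{(\alpha+1)} = \Phi \circ \Phi^{(\alpha)}$, and $\Phi^{(\beta)}(X) = \colim_{\alpha < \beta} \Phi^{(\alpha)}(X)$ at limit ordinals $\beta$. Since colimits in $\M$ commute with tensors and mapping spaces up to the usual enriched structure, each $\Phi^{(\alpha)}$ is a simplicial endofunctor, the transfinite composite $\eta^{(\lambda)}_X\colon X \to \Phi^{(\lambda)}(X)$ is a simplicial natural transformation, and it is an acyclic cofibration as a transfinite composite of acyclic cofibrations. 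Define $\mathrm{Fib}(X) := \Phi^{(\lambda)}(X)$.

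Finally I would verify that $\mathrm{Fib}(X)$ is fibrant. Given any lifting problem $A_j \to \mathrm{Fib}(X)$, Remark~\ref{remark:Small object argument} (applied to the $\lambda$-sequence $(\Phi^{(\alpha)}(X))_{\alpha<\lambda}$) produces a factorization $A_j \to \Phi^{(\alpha)}(X) \to \mathrm{Fib}(X)$ for some $\alpha < \lambda$. A map $A_j \to \Phi^{(\alpha)}(X)$ is a $0$-simplex of $\Map_\M(A_j, \Phi^{(\alpha)}(X))$, so via $A_j \cong A_j \otimes \Delta^0$ it factors through the summand $A_j \otimes \Map_\M(A_j, \Phi^{(\alpha)}(X))$ of the coproduct defining $\Phi(\Phi^{(\alpha)}(X)) = \Phi^{(\alpha+1)}(X)$, and therefore extends along $j$ to $B_j \to \Phi^{(\alpha+1)}(X) \to \mathrm{Fib}(X)$. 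Thus $\mathrm{Fib}(X)$ has the right lifting property with respect to every element of $J$ and is fibrant.

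The main obstacle is the compatibility check in this last step: one must argue that the enriched step really does encode all naive $0$-simplex lifting problems, which is why the use of the tensor $A_j \otimes \Map_\M(A_j, X)$ (rather than a coproduct indexed by $\Hom_\M(A_j,X)$) is essential. Everything else — functoriality, simplicial naturality, the acyclic-cofibration property of each stage, and termination — follows formally from (SM6), (SM7'), the smallness hypothesis on $J$, and the fact that tensors with simplicial sets preserve colimits.
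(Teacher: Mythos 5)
Your construction is exactly the paper's: the enriched small object argument whose single step is the pushout of $\coprod_j A_j \otimes \Map_\M(A_j,X) \to \coprod_j B_j \otimes \Map_\M(A_j,X)$ against evaluation, with each stage an acyclic cofibration via (SM7') applied to $\varnothing \to \Map_\M(A_j,X)$, followed by a transfinite colimit. The only difference is that you write out the fibrancy check (factoring a lifting problem through a stage by smallness and viewing it as a $0$-simplex of the mapping space), which the paper leaves as a straightforward exercise; your argument there is correct.
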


\begin{proof}
Let $\M$ be a cofibrantly generated simplical model category, and let $J$ denote a set of generating acyclic cofibrations. For any object $A$ of $\M$, define $A_0 = A$ and, for each $\beta$, define $A_{\beta+1}$ via the pushout
\vspace{-1mm}
\begin{equation} \label{fibreppushout}
\begin{tikzpicture}[baseline=(current bounding box.center), node distance=4.5cm, auto, scale=1]

\node (A) {$\displaystyle \coprod_{U \longrightarrow V} \Map_{\M}(U,A_{\beta}) \otimes U$};
\node (B) [transform canvas={yshift=0.6ex}, right of=A] {$A_{\beta}$};
\node (C) [node distance=1.8cm, below of=A] {$\displaystyle \coprod_{U \longrightarrow V} \Map_{\M}(U,A_{\beta}) \otimes V$};
\node (D) [transform canvas={yshift=0.6ex}, right of=C] {$A_{\beta+1}$};

\draw[transform canvas={yshift=0.8ex}, ->] (A) to node {} (B);
\draw[transform canvas={yshift=0.8ex}, ->] (C) to node [swap] {$\iota_{\beta}^A$} (D);
\draw[transform canvas={yshift=0.7ex}, ->] (A) to node {} (C);
\draw[->] (B) to node {$\varphi_{\beta}^A$} (D);

\end{tikzpicture} 
\end{equation}
where the coproducts are taken over the set of generating acyclic cofibrations. We define the fibrant replacement of $A$ to be the transfinite sequential colimit $\mathbb{F}A := \colim_{\beta} A_{\beta}$. By setting $K = \varnothing$ and $L = \Map_{\M}(U,A_{\beta})$ in (S7') of Remark \ref{tensorcotensorrmk}, we see that each $\Map_{\M}(U,A_{\beta}) \otimes U \rightarrow \Map_{\M}(U,A_{\beta}) \otimes V$ is an acyclic cofibration. Then, using the fact that acyclic cofibrations are closed under small coproducts and are stable under pushouts, we see that each $\varphi_{\beta}^A \colon A_{\beta} \rightarrow A_{\beta + 1}$ is an acyclic cofibration. It follows that the composite $\varphi_A \colon A = A_0 \rightarrow \colim_{\beta} A_{\beta} = \mathbb{F}A$ is also an acyclic cofibration; see \cite[10.3.4]{hirschhorn}.

It is a now a straightforward exercise to show that the unique map $\mathbb{F}A \rightarrow \ast_{\M}$ is a fibration, that $\mathbb{F}$ is a simplicial functor, and that this factorization is functorial.
\end{proof}

\begin{remark}\label{FibRepRemarks} 
Observe that for any cofibrantly generated simplicial model category $\M$, the functorial fibrant replacement functor satisfies the following properties.
\begin{enumroman}
\item If $A$ is cofibrant, then $\mathbb{F}A$ is cofibrant, following from the fact that $A \rightarrow \mathbb{F}A$ is an acyclic cofibration.

\item If $A \rightarrow B$ is a weak equivalence, then $\mathbb{F}A \rightarrow \mathbb{F}B$ is a weak equivalence by the two-out-of-three property.

\item It follows from (i) and (ii) that if $A \rightarrow B$ is a weak equivalence between cofibrant objects, then $\mathbb{F}A \rightarrow \mathbb{F}B$ is a weak equivalence between objects that are both fibrant and cofibrant, and is therefore a simplicial homotopy equivalence.

\item Since fibrant replacements are sequential colimits they commute with all colimits in $\M$, i.e., $\mathbb{F} (\underset{i}{\colim} \, A_i ) \cong \underset{i}{\colim} \, \mathbb{F}A_i$. 
\end{enumroman}
\end{remark}

The remaining piece that we need before stating our assumptions is the following definition.

\begin{definition} \label{SimpFinPres}
An object $C$ in a simplicial category $\C$ is \emph{simplicially finitely presentable} if the representable functor $R^C = \Map(C,-) \colon \C \rightarrow \s$ preserves filtered colimits, so
\[ \Map_{\C}(C, \underset{i}{\colim} \, A_i) \cong \underset{i}{\colim} \, \Map_{\C}(C,A_i).\]
\end{definition}

\begin{convention} \label{HomotopyConventions} 
In this subsection, we make the following assumptions in addition to those listed in Convention \ref{convention}.
\begin{enumerate}
\item \label{cofibpresentable} 
Suppose that $\C$ is a full simplicial subcategory of a cofibrantly generated simplicial model category $\B$, and that the objects of $\C$ are all cofibrant and simplicially finitely presentable. 

\item \label{seqcolimits} 
For every object $C$ of $\C$, the fibrant replacement $\mathbb{F}C$ is a sequential colimit of objects $C_{\beta}$ in $\C$. 

\item \label{preseqcolims}
Weak equivalences, fibrations, and homotopy pullbacks are preserved under sequential colimits in $\D$. 
\end{enumerate}
\end{convention}

\begin{remark} 
Although this list may seem lengthy, we claim that these conditions are satisfied by many familiar model categories. For instance, condition \eqref{cofibpresentable} can be satisfied by taking $\C$ to be the full subcategory of $\B$ consisting of the cofibrant simplicially finitely presentable objects.

If we only asked for the objects $C_\beta$ to be in $\B$, then condition \eqref{seqcolimits} is satisfied for any cofibrantly generated model category $\B$, given our construction of fibrant replacements via the small object argument. The issue is whether these objects are in the subcategory $\C$, not in the larger model category $\B$.  It does hold in many examples of interest, for example taking $\B$ to be the usual model structure on topological spaces or simplicial sets and $\C$ the subcategory of finite spaces or finite simplicial sets, respectively. 

Similarly, condition \eqref{preseqcolims} holds in many nice cases, such as the categories of topological spaces and simplicial sets.  For conditions under which sequential colimits preserve weak equivalences and fibrations, see \cite[\S 7.4]{hovey}; for a discussion of filtered colimits commuting with homotopy pullbacks, see \cite{hirschnotes}.
\end{remark}

The assumptions on $\D$ have the following consequence.

\begin{lemma} \label{seqcolimandhocolim} 
Sequential colimits are weakly equivalent to sequential homotopy colimits in $\D$. 
\end{lemma}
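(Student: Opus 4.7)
The plan is to reduce to the case of a sequential diagram that is already cofibrant in the projective sense on $\D^{\mathbb{N}}$, where the ordinary colimit computes the homotopy colimit, and then use the hypothesis that sequential colimits preserve weak equivalences in $\D$ to transfer this identification to an arbitrary sequential diagram.

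First, given a sequential diagram $X_0 \to X_1 \to X_2 \to \cdots$ in $\D$, I would build a levelwise weakly equivalent replacement $\widetilde X_0 \hookrightarrow \widetilde X_1 \hookrightarrow \widetilde X_2 \hookrightarrow \cdots$ in which each $\widetilde X_n$ is cofibrant and each structure map is a cofibration. This is done inductively: choose a cofibrant replacement $\widetilde X_0 \xrightarrow{\simeq} X_0$, and having produced $\widetilde X_n \xrightarrow{\simeq} X_n$ with $\widetilde X_n$ cofibrant, factor the composite $\widetilde X_n \to X_n \to X_{n+1}$ as a cofibration $\widetilde X_n \hookrightarrow \widetilde X_{n+1}$ followed by an acyclic fibration $\widetilde X_{n+1} \xrightarrow{\simeq} X_{n+1}$ using the factorization axiom in $\D$. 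Since $\D$ is cofibrantly generated, this is essentially a hands-on construction of a cofibrant replacement in the projective model structure on $\D^{\mathbb{N}}$.

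Next, a standard argument shows that for such a sequence of cofibrations between cofibrant objects, the ordinary colimit already models the homotopy colimit, so $\colim_n \widetilde X_n \simeq \hocolim_n \widetilde X_n$. The levelwise weak equivalences $\widetilde X_n \xrightarrow{\simeq} X_n$ then induce a weak equivalence $\hocolim_n \widetilde X_n \xrightarrow{\simeq} \hocolim_n X_n$ by the homotopy invariance of $\hocolim$, while assumption \ref{preseqcolims} of Convention \ref{HomotopyConventions} gives a weak equivalence $\colim_n \widetilde X_n \xrightarrow{\simeq} \colim_n X_n$. Chaining these together yields the desired zig-zag of weak equivalences $\colim_n X_n \simeq \hocolim_n X_n$.

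The only genuinely delicate step is the identification $\colim_n \widetilde X_n \simeq \hocolim_n \widetilde X_n$ for the cofibrantly replaced diagram, which is precisely where the projective cofibrancy hypothesis on $\widetilde X_\bullet$ is used; everything else is either formal or is guaranteed by the standing assumptions on $\D$ in Convention \ref{HomotopyConventions}.
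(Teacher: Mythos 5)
Your proposal is correct and follows essentially the same route as the paper: inductively replace the sequence by a levelwise weakly equivalent diagram of cofibrations between cofibrant objects via the factorization axiom, identify the colimit of that replacement with the homotopy colimit, and use Convention \ref{HomotopyConventions}(\ref{preseqcolims}) to compare the two ordinary colimits. The only difference is presentational—you split the final identification into two steps (colimit of the cofibrant diagram equals its homotopy colimit, then homotopy invariance of $\hocolim$), which the paper compresses into one sentence.
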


\begin{proof}
Let $\colim D_i$ be a sequential colimit in $\D$ and let $D'_0$ be a cofibrant replacement of $D_0$. We can factor the resulting map $D'_0 \rightarrow D_1$ as a cofibration $D'_0 \rightarrow D'_1$ followed by an acyclic fibration $D'_1 \rightarrow D_1$. We can then repeat the process with the map $D'_1 \rightarrow D_2$, and so on, giving a commutative diagram 
\vspace{-2mm}
\begin{center}
\begin{tikzpicture}[node distance=2cm, auto]

\node (A) {$D'_0$};
\node (B) [right of=A] {$D'_1$};
\node (C) [right of=B] {$D'_2$};
\node (D) [right of=C] {$\hdots$};
\node (E) [right of=D] {$\colim D'_i$};

\node (F) [node distance=1.5cm, below of=A] {$D_0$};
\node (G) [right of=F] {$D_1$};
\node (H) [right of=G] {$D_2$};
\node (I) [right of=H] {$\hdots$};
\node (J) [right of=I] {$\colim D_i$};

\draw[right hook->] (A) to node {} (B);
\draw[right hook->] (B) to node {} (C);
\draw[right hook->] (C) to node {} (D);
\draw[right hook->] (D) to node {} (E);

\draw[->] (F) to node [swap] {} (G);
\draw[->] (G) to node [swap] {} (H);
\draw[->] (H) to node [swap] {} (I);
\draw[->] (I) to node {} (J);

\draw[->>] (A) to node [swap] {$\simeq$} (F);
\draw[->>] (B) to node {$\simeq$} (G);
\draw[->>] (C) to node {$\simeq$} (H);
\draw[dashed,->] (E) to node {} (J);

\end{tikzpicture} 
\end{center}
where the map $\colim D'_i \rightarrow \colim D_i$ is induced by the universal property of colimits and, by Convention \ref{HomotopyConventions}(\ref{preseqcolims}), is a weak equivalence. Since the upper horizontal arrows in the diagram are all cofibrations between cofibrant objects, we have $\hocolim D_i = \colim D'_i$, proving the result. 
\end{proof}

Before proceeding to the homotopy functor model structure, we need to state one more consequence of our conventions.  Recall that the simplicial left Kan extension of $F \colon \C \rightarrow \D$ along a simplicial functor $p \colon \C \rightarrow \C'$ is another simplicial functor $p_!(F) \colon \C' \rightarrow \D$ together with a simplicial natural transformation 
\vspace{-2mm}
\begin{center}
\begin{tikzpicture}[node distance=2.8cm, auto]

\node (A) {$\C$};
\node (B) [right of=A] {$\D$};
\node (C) [node distance=1.4cm, right of=A, below of=A] {$\C'$};

\draw[->] (A) to node {$F$} (B);
\draw[->] (A) to node [swap] {$p$} (C);
\draw[->] (C) to node [swap] {$p_!(F)$} (B);

\node () [node distance=9mm, above of=C] {$\Downarrow \eta$};

\end{tikzpicture} 
\end{center}
that is appropriately universal with respect to this property. The functor $p_!(F)$ is given by the coend
\[ p_!(F)(B) = \int^{A} \Map_{\C'}(p(A), B) \otimes FA \]
in $\D$ \cite[2.4]{kelly}.  

\begin{remark}\label{idkan}
Observe that for the inclusion functor $i \colon \C \rightarrow \B$, it follows from Lemma \ref{Yoneda} that 
\[ i_!(F)(C) = \int^{A} \Map_{\B}\left(i(A), C\right) \otimes FA = \int^{A} \Map_{\C}(A, C) \otimes FA \cong FC \]
for any object $C$ of $\C$, so $i_!(F) \circ i = F$, and $\eta$ is the identity transformation.
\end{remark}

\begin{prop} \label{KanPresFiltered}
For any simplicial functor $F \colon \C \rightarrow \D$, the simplicial left Kan extension $i_!(F)$ of $F$ along the inclusion $i \colon \C \rightarrow \B$ preserves filtered colimits.
\end{prop}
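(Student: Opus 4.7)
The strategy is to expand $i_!(F)(\colim_j B_j)$ using its defining coend and then commute the filtered colimit successively past the mapping space, past the simplicial tensor, and finally out of the coend. Our assumption (Convention \ref{HomotopyConventions}(\ref{cofibpresentable})) that each object $A$ of $\C$ is simplicially finitely presentable, together with the left adjointness of the tensor in its simplicial-set variable and the standard fact that colimits commute with colimits, will carry out each of these steps in turn.

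In more detail, given a small filtered diagram $(B_j)_{j \in J}$ in $\B$, I would first write
\[ i_!(F)(\colim_j B_j) \;=\; \int^{A} \Map_{\B}(A,\, \colim_j B_j) \otimes FA, \]
using the coend formula recorded just before the statement. Second, since each $A \in \C$ is simplicially finitely presentable (viewed as an object of $\B$ via the full simplicial inclusion, so that $\Map_{\C}(A,-) = \Map_{\B}(A,-)|_{\C}$), the functor $\Map_{\B}(A,-)$ commutes with the filtered colimit, giving
\[ \int^{A} \bigl(\colim_j \Map_{\B}(A, B_j)\bigr) \otimes FA. \]
Third, for each fixed $A$, the functor $(-) \otimes FA \colon \s \to \D$ is a left adjoint (to $\Map_{\D}(FA,-)$, by axiom (SM6)) and therefore preserves colimits in its first variable, so we may move the $\colim_j$ past the tensor:
\[ \int^{A} \colim_j \bigl(\Map_{\B}(A, B_j) \otimes FA\bigr). \]
Fourth, coends are themselves colimits (indexed by the twisted arrow category of $\C$), so the Fubini-type interchange of two colimits lets us pull $\colim_j$ outside of $\int^{A}$, yielding $\colim_j i_!(F)(B_j)$ as desired.

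The main subtle point will be step two: the definition of simplicially finitely presentable is phrased for the representable $\Map_{\C}(A,-) \colon \C \to \s$, but the filtered diagram $(B_j)$ lives in $\B$, not $\C$, so what we genuinely need is that $\Map_{\B}(A,-) \colon \B \to \s$ preserves filtered colimits. I would simply take this as the intended meaning of the convention (consistent with standard usage of ``finitely presentable'' in a cocomplete ambient category, and with the motivating examples of finite CW complexes or finite simplicial sets sitting inside spaces or simplicial sets). Everything else in the argument is formal: the colimit--colimit interchange is general, and the adjoint-based preservation of colimits by the tensor is immediate from (SM6). Thus the proof reduces to chaining four canonical isomorphisms, each justified by a single clearly stated principle.
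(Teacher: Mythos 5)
Your proof is correct and follows essentially the same route as the paper: expand the coend, commute the filtered colimit past $\Map_{\B}(A,-)$ via Convention \ref{HomotopyConventions}(\ref{cofibpresentable}), then past the tensor (a left adjoint by (SM6)) and finally past the coend, which is itself a colimit. The subtle point you flag---that the convention is phrased for $\Map_{\C}(A,-)$ while the argument needs $\Map_{\B}(A,-)$ to preserve filtered colimits in $\B$---is also glossed over in the paper, which simply cites the convention, and your reading of it is the intended one.
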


\begin{proof}
Let $\underset{k}{\colim} \, B_k$ be a filtered colimit in $\B$. Then
\[ \begin{aligned}
i_!(F)\left(\underset{k}{\colim} \,  B_k\right) &= \int^{A} \Map_{\B}\left(A, \underset{k}{\colim} \, B_k \right) \otimes FA \\
&\cong \int^{A} \underset{k}{\colim} \left( \Map_{\B}(A, B_k) \right) \otimes FA\\
&\cong \underset{k}{\colim} \  \int^{A} \Map_{\B}(A, B_k) \otimes FA\\
&= \underset{k}{\colim} \  i_!(F)(B_k),
\end{aligned} \]
where the first isomorphism holds by Convention \ref{HomotopyConventions}\eqref{cofibpresentable}, and the second isomorphism holds since both tensoring and coends commute with colimits; the former is a left adjoint and the latter is itself a colimit.
\end{proof}

We now have the ingredients to define a means for replacing any functor $F \colon \C \rightarrow \D$ by a homotopy functor. This definition was first given in \cite[4.10]{br}.

\begin{definition}\label{hfdefn} 
Let $F \colon \C \rightarrow \D$ be a simplicial functor. We define the simplicial functor $F^{\hf} \colon \C \rightarrow \D$ as the composite  
\begin{center}
\begin{tikzpicture}[node distance=2.2cm, auto, scale=1]

\node (A) {$\C$};
\node (B) [right of=A] {$\B$};
\node (C) [right of=B] {$\B$};
\node (D) [right of=C] {$\D$,};

\draw[right hook->] (A) to node {$i$} (B);
\draw[->] (B) to node {$\mathbb{F}$} (C);
\draw[->] (C) to node {$i_!(F)$} (D);

\end{tikzpicture}
\end{center}
where $i \colon \C \rightarrow \B$ is the inclusion and $i_!(F)$ is the simplicial left Kan extension.
\end{definition}

\begin{remark}\label{varphi induces theta}
Observe that since $i_!(F) \circ i = F$, there exists a canonical natural transformation $\theta_F \colon F \Rightarrow F^{\hf}$ induced by the transformation $\varphi \colon \id_{\B} \Rightarrow \mathbb{F}$ defined in the proof of Theorem \ref{SimpFibRep}. 
\end{remark}

Let us first verify that this construction does in fact produce a homotopy functor, as suggested by the notation.

\begin{prop} \label{hfishf}
If $F \colon \C \rightarrow \D$ is a simplicial functor, then $F^{\hf} \colon \C \rightarrow \D$ is a simplicial homotopy functor.
\end{prop}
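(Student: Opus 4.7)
The plan is to chase the definition of $F^{\hf} = i_!(F) \circ \mathbb{F} \circ i$ and show each stage either preserves weak equivalences or, better yet, promotes them to simplicial homotopy equivalences. So let $f \colon C \to C'$ be a weak equivalence between objects of $\C$. Since $\C$ is a full simplicial subcategory of $\B$ (Convention \ref{HomotopyConventions}\eqref{cofibpresentable}), the map $f$ is in particular a weak equivalence in $\B$, and both $C$ and $C'$ are cofibrant in $\B$ by the same convention.

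First, I would invoke Remark \ref{FibRepRemarks}(iii): because $f$ is a weak equivalence between cofibrant objects of $\B$, the map $\mathbb{F}f \colon \mathbb{F}C \to \mathbb{F}C'$ is a weak equivalence between objects that are both fibrant and cofibrant in $\B$, and is therefore a simplicial homotopy equivalence. Hence there exist a simplicial map $g \colon \mathbb{F}C' \to \mathbb{F}C$ and simplicial homotopies (i.e., morphisms out of the tensor with $\Delta^1$) exhibiting $g \circ \mathbb{F}f \sim \id$ and $\mathbb{F}f \circ g \sim \id$.

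Next, I would apply the simplicial functor $i_!(F)$ to this data. Since $i_!(F)$ is a simplicial functor, it preserves the simplicial tensor up to the canonical comparison and in particular preserves simplicial homotopies; consequently $i_!(F)(\mathbb{F}f) = F^{\hf}(f)$ is a simplicial homotopy equivalence in $\D$. Finally, any simplicial homotopy equivalence in a simplicial model category is a weak equivalence, so $F^{\hf}(f)$ is a weak equivalence in $\D$, as required.

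The only subtlety — and this is the step I would flag as the main point to justify carefully — is the claim that $i_!(F)$ respects the simplicial homotopy relation. This is a formal consequence of $i_!(F)$ being a simplicial functor, since for any object $B$ of $\B$ and any simplicial set $K$ there is a comparison map $i_!(F)(B) \otimes K \to i_!(F)(B \otimes K)$ compatible with composition, so that a homotopy $H \colon \mathbb{F}C \otimes \Delta^1 \to \mathbb{F}C$ yields a homotopy $i_!(F)(\mathbb{F}C) \otimes \Delta^1 \to i_!(F)(\mathbb{F}C)$ in $\D$. Everything else in the argument is immediate from the conventions and the properties of $\mathbb{F}$ recorded in Theorem \ref{SimpFibRep} and Remark \ref{FibRepRemarks}.
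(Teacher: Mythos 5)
Your proposal is correct and follows essentially the same route as the paper's proof: apply Remark \ref{FibRepRemarks}(iii) together with Convention \ref{HomotopyConventions}(\ref{cofibpresentable}) to see that $\mathbb{F}(f)$ is a simplicial homotopy equivalence, and then use the fact that the simplicial functor $i_!(F)$ preserves simplicial homotopy equivalences, so $F^{\hf}(f)$ is one too and hence a weak equivalence. The only difference is your extra justification of homotopy preservation via tensor comparison maps, which the paper leaves implicit (one can also argue directly from the action of $i_!(F)$ on mapping spaces).
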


\begin{proof}
Consider a weak equivalence $f \colon A \rightarrow B$ in $\C$.  Then, by Remark \ref{FibRepRemarks}(iii) and Convention \ref{HomotopyConventions}(\ref{cofibpresentable}), $\mathbb{F}(f) \colon \mathbb{F}(A) \rightarrow \mathbb{F}(B)$ is a simplicial homotopy equivalence. Since simplicial homotopy equivalences are preserved by any simplicial functor, $i_!(F) \left(\mathbb{F}(f) \right) = F^{\hf}(f)$ is a simplicial homotopy equivalence, and therefore a weak equivalence.  
\end{proof}

In light of Proposition \ref{QpresWE}, the following fact is useful.

\begin{lemma} \label{hf pres terminal}
The endofunctor $(-)^{\hf} \colon \Fun(\C, \D) \rightarrow \Fun(\C,\D)$ preserves the terminal object.
\end{lemma}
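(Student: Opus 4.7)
The terminal object of $\Fun(\C,\D)$ is the constant functor $T$ at the terminal object $\ast_\D$ of $\D$; moreover, a functor $G \in \Fun(\C,\D)$ is terminal precisely when each $G(C)$ is terminal in $\D$. So the task reduces to checking that $T^{\hf}(C)$ is terminal in $\D$ for every object $C$ of $\C$.

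The plan is to unwind the definition $T^{\hf}(C) = i_!(T)(\mathbb{F}(C))$ and exploit the fact that $\mathbb{F}(C)$ is a filtered colimit of objects in $\C$. By Convention \ref{HomotopyConventions}(\ref{seqcolimits}), we have $\mathbb{F}(C) \cong \colim_\beta C_\beta$ as a sequential (hence filtered) colimit with each $C_\beta \in \C$. Since Proposition \ref{KanPresFiltered} tells us that $i_!(T)$ preserves filtered colimits, the colimit can be pulled outside:
\[ T^{\hf}(C) \;\cong\; \colim_\beta \, i_!(T)(C_\beta). \]

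Next, since $i \colon \C \hookrightarrow \B$ is the inclusion of a full subcategory, Remark \ref{idkan} identifies $i_!(T)(C_\beta)$ with $T(C_\beta) = \ast_\D$. All transition maps in the colimit are then endomorphisms of $\ast_\D$, which must be the identity since $\ast_\D$ is terminal. A sequential colimit of identity maps on $\ast_\D$ is $\ast_\D$ itself, so $T^{\hf}(C)$ is terminal in $\D$, as required.

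I do not anticipate a serious obstacle: the argument is a short chain of identifications, none of which is deep. The only point that warrants mild attention is verifying that the colimit presentation furnished by Convention \ref{HomotopyConventions}(\ref{seqcolimits}) is of the kind to which Proposition \ref{KanPresFiltered} applies, which is immediate since every sequential colimit is filtered.
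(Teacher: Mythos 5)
Your proposal is correct and follows essentially the same route as the paper's proof: express $\mathbb{F}C$ as a sequential colimit of objects $C_\beta$ in $\C$ via Convention \ref{HomotopyConventions}(\ref{seqcolimits}), pull the colimit out using Proposition \ref{KanPresFiltered}, identify $i_!(T)(C_\beta)$ with $T(C_\beta)=\ast_\D$ via Remark \ref{idkan}, and conclude that the colimit of terminal objects is terminal. The only (harmless) addition is your explicit remark that terminality in $\Fun(\C,\D)$ is detected levelwise, which the paper leaves implicit.
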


\begin{proof}
Consider an object $C$ of $\C$. By Convention \ref{HomotopyConventions}(\ref{seqcolimits}) we can express $\mathbb{F}C$ as a sequential colimit of objects $C_{\beta}$ in $\C$. Then 
\[ F^{\hf}(C) = \left(i_!(F) \circ \mathbb{F}\right)(C) = i_!(F) (\underset{{\beta}}{\colim} \ C_{\beta}) \cong \underset{{\beta}}{\colim}\ i_!(F) \ (C_{\beta}), \]
where the isomorphism holds by Proposition \ref{KanPresFiltered}. Now since each $C_{\beta}$ is an object of $\C$, we have 
\[ \underset{{\beta}}{\colim} \ i_!(F) (C_{\beta}) = \underset{\beta}{\colim} \ FC_{\beta}\]
by Remark \ref{idkan}. Therefore, if $FC_{\beta}$ is the terminal object for all ${\beta}$, then $F^{\hf}(C)$ is also terminal.
\end{proof}

We now recall the $\hf$-model structure \cite[4.14]{br}, which can be proved by showing that $(-)^{\hf}$ satisfies axioms (A1), (A2), and (A3'). 

\begin{theorem} \label{hfmodel}
Under the assumptions of Convention \ref{HomotopyConventions} the category $\Fun(\C,\D)$ has a simplicial right proper model structure, denoted by $\Fun(\C,\D)_{\hf}$, in which $F \rightarrow G$ is a weak equivalence if $F^{\hf} \rightarrow G^{\hf}$ is a levelwise weak equivalence, and the cofibrations are precisely the projective cofibrations.  
\end{theorem}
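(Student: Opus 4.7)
The plan is to apply Bousfield's Q-Theorem (Theorem \ref{BousfieldQ}), in the form of Corollary \ref{simplifiedA3}, to the projective model structure on $\Fun(\C,\D)$ (Theorem \ref{t:Funmodel}), with endofunctor $Q = (-)^{\hf}$ and natural transformation $\theta \colon \id \Rightarrow (-)^{\hf}$ from Remark \ref{varphi induces theta}. This reduces the theorem to showing that $(-)^{\hf}$ is a Bousfield endofunctor in the sense of Definition \ref{Bousenddef}, i.e., to verifying axioms (A1), (A2), and (A3'). The single observation that powers each verification is the filtered-colimit formula
\[ F^{\hf}(C) \;=\; i_!(F)(\mathbb{F}C) \;\cong\; \colim_{\beta} F(C_\beta), \]
obtained from the decomposition $\mathbb{F}C = \colim_\beta C_\beta$ with $C_\beta \in \C$ (Convention \ref{HomotopyConventions}(2)) via the filtered colimit preservation of Proposition \ref{KanPresFiltered} together with Remark \ref{idkan}.

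For (A1), a levelwise weak equivalence $F \to G$ induces levelwise weak equivalences $F(C_\beta) \to G(C_\beta)$ at each stage, and by Convention \ref{HomotopyConventions}(3) the induced map on sequential colimits $F^{\hf}(C) \to G^{\hf}(C)$ is a weak equivalence in $\D$. For (A3'), homotopy pullback squares in $\Fun(\C,\D)$ are computed objectwise in $\D$; applying $(-)^{\hf}$ replaces each corner by a sequential colimit of the same form, and the preservation of homotopy pullbacks by sequential colimits in $\D$ (again Convention \ref{HomotopyConventions}(3)) shows that the resulting square remains an objectwise homotopy pullback.

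The main work lies in (A2), which asks that both $\theta_{F^{\hf}}$ and $(\theta_F)^{\hf}$ are levelwise weak equivalences. The key input is Proposition \ref{hfishf}, which tells us $F^{\hf}$ is already a homotopy functor. Since each transition $C_\beta \to C_{\beta+1}$ is an acyclic cofibration in $\B$, the induced maps $F^{\hf}(C_\beta) \to F^{\hf}(C_{\beta+1})$ are weak equivalences in $\D$, and Convention \ref{HomotopyConventions}(3) identifies $\theta_{F^{\hf}}(C)$ with the inclusion of the $\beta = 0$ stage into a sequential colimit of weak equivalences, hence with a weak equivalence. The subtler map is $(\theta_F)^{\hf}(C)$, which under the filtered-colimit identifications is the colimit of the levelwise maps $\theta_F(C_\beta) \colon F(C_\beta) \to F^{\hf}(C_\beta)$; these individual components need not be weak equivalences for general $F$, so a naive levelwise argument fails. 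My plan is to arrange them into a morphism of sequential towers in $\D$ whose target tower consists of weak equivalences (by the homotopy-functor property of $F^{\hf}$) and to combine naturality of $\theta$ with Convention \ref{HomotopyConventions}(3) to deduce that the induced map on colimits is a weak equivalence. This bookkeeping is the main technical obstacle, and I would expect it to require explicitly constructing the double tower obtained by iterating the fibrant-replacement decomposition and tracing how $\theta_{F^{\hf}}$ and $(\theta_F)^{\hf}$ are realized in terms of $i_!(F)$ applied to the two natural maps $\mathbb{F}C \to \mathbb{F}^2 C$ coming from the idempotency of $\mathbb{F}$.

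With (A1), (A2), and (A3') in place, Corollary \ref{simplifiedA3} together with Theorem \ref{BousfieldQ} directly produces the desired right proper simplicial model structure $\Fun(\C,\D)_{\hf}$, whose cofibrations are those of the projective structure and whose weak equivalences are the $Q$-equivalences, matching the statement of the theorem.
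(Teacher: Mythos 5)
Your overall strategy is exactly the paper's: Theorem \ref{hfmodel} is obtained by verifying that $(-)^{\hf}$, with the natural transformation $\theta$ of Remark \ref{varphi induces theta}, satisfies (A1), (A2) and (A3'), and then invoking Theorem \ref{BousfieldQ} via Corollary \ref{simplifiedA3}. Your verifications of (A1) and (A3'), and of the half of (A2) concerning $\theta_{F^{\hf}}$ (which is just Proposition \ref{hfpreshf} applied to the homotopy functor $F^{\hf}$), are correct.

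However, there is a genuine gap in the other half of (A2): you never prove that $(\theta_F)^{\hf}$ is a levelwise weak equivalence, and the plan you sketch would not close it. Writing $\mathbb{F}C=\colim_\beta C_\beta$ as in Convention \ref{HomotopyConventions}(2), the map $(\theta_F)^{\hf}(C)$ is the colimit over $\beta$ of the maps $\theta_F(C_\beta)\colon F(C_\beta)\to F^{\hf}(C_\beta)$, and for a general $F$ these components are \emph{not} weak equivalences; Convention \ref{HomotopyConventions}(3) only says that a \emph{levelwise} weak equivalence of sequential diagrams induces a weak equivalence on colimits, so the fact that the transition maps of the target tower are weak equivalences, together with naturality of $\theta$, yields nothing. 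A levelwise argument along the tower building $\mathbb{F}(\mathbb{F}C)$ from $\mathbb{F}C$ also fails, because $i_!(F)$ has no reason to send the intermediate acyclic cofibrations (whose stages are neither fibrant nor objects of $\C$) to weak equivalences. The missing idea is the mechanism already used in Proposition \ref{hfishf}: using Proposition \ref{KanPresFiltered}, Remark \ref{idkan}, and Remark \ref{FibRepRemarks}(iv), one identifies $\theta_{F^{\hf}}(C)$ and $(\theta_F)^{\hf}(C)$ with $i_!(F)$ applied to the two canonical maps $\mathbb{F}(\varphi_C),\ \varphi_{\mathbb{F}C}\colon \mathbb{F}C\to\mathbb{F}\mathbb{F}C$ (you correctly guess this identification at the end); then one observes that both of these are weak equivalences between objects of $\B$ that are cofibrant \emph{and} fibrant (Remark \ref{FibRepRemarks}(i)--(iii)), hence simplicial homotopy equivalences, and that the simplicial functor $i_!(F)$ preserves simplicial homotopy equivalences, so both maps are levelwise weak equivalences. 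Without this (or an equivalent) input, the verification of (A2), and hence the proof of the theorem, is incomplete.
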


We conclude this section with a few consequences of this result.  They are all expected properties for localized model structures, and whose analogues are known for left Bousfield localizations, but that do not appear to be known in generality for Bousfield-Friedlander localization.  

\begin{prop} \label{fibranthf} 
Let $\alpha \colon F \rightarrow G$ be a fibration in $\Fun(\C,\D)$. Then $\alpha$ is a fibration in $\Fun(\C,\D)_{\hf}$ if and only if for each weak equivalence $A \rightarrow B$ in $\C$ the diagram
\vspace{-1mm}
\begin{center}
\begin{tikzpicture}[node distance=1.9cm, auto, scale=1]

\node (A) {$FA$};
\node (B) [right of=A] {$FB$};
\node (C) [node distance=1.5cm, below of=A] {$GA$};
\node (D) [node distance=1.5cm, below of=B] {$GB$};

\draw[->] (A) to node {} (B);
\draw[->] (A) to node [swap] {} (C);
\draw[->] (C) to node [swap] {} (D);
\draw[->] (B) to node {} (D);

\end{tikzpicture}
\end{center}
is a homotopy pullback square.  Moreover, a functor $F \colon \C \rightarrow \D$ is fibrant in $\Fun(\C,\D)_{\hf}$ if and only if it is a levelwise fibrant homotopy functor. 
\end{prop}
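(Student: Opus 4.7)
My plan is to apply Proposition \ref{Qfibrations} for the Bousfield endofunctor $Q = (-)^{\hf}$. Part (1) converts the first claim into comparing the $\hf$-fibration condition at each $C \in \C$---that the naturality square of $\theta$ with corners $FC, F^{\hf}C, GC, G^{\hf}C$ is a homotopy pullback---with the stated condition on weak equivalences in $\C$. For the moreover, part (2) applies since $\ast^{\hf} \cong \ast$ by Lemma \ref{hf pres terminal}, reducing $\hf$-fibrancy of $F$ to projective fibrancy together with $\theta_F \colon F \to F^{\hf}$ being a levelwise weak equivalence. Throughout, it suffices to check the relevant homotopy pullbacks objectwise in $\D$, since the projective model structure is defined levelwise.

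For the forward direction of the first equivalence, I would fix a weak equivalence $A \to B$ in $\C$ and run a pasting argument with Proposition \ref{pullbackcomposite}. Consider the $2 \times 3$ rectangle with outer corners $FA, F^{\hf}B, GA, G^{\hf}B$, decomposed horizontally as $FA \to FB \to F^{\hf}B$ above $GA \to GB \to G^{\hf}B$, or equivalently (by naturality of $\theta$, which forces the two outer composites to agree) as $FA \to F^{\hf}A \to F^{\hf}B$ above $GA \to G^{\hf}A \to G^{\hf}B$. In this second decomposition the left square is precisely the $\hf$-fibration condition at $A$, while the right square has both horizontal arrows weak equivalences---since $F^{\hf}, G^{\hf}$ are homotopy functors by Proposition \ref{hfishf}---and is therefore a homotopy pullback by Proposition \ref{properwe}(\ref{properweb}); pasting then shows the outer rectangle is a homotopy pullback. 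Switching to the first decomposition, the right square is the $\hf$-fibration condition at $B$, and Proposition \ref{pullbackcomposite} now forces the left square---the square in the statement---to be a homotopy pullback.

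For the converse, fix $C \in \C$. Convention \ref{HomotopyConventions}(\ref{seqcolimits}) expresses $\mathbb{F}C = \colim_\beta C_\beta$ with $C_\beta \in \C$, and the construction in the proof of Theorem \ref{SimpFibRep} presents each transition map---hence each $C \to C_\beta$---as an acyclic cofibration, in particular a weak equivalence. The hypothesis then supplies a homotopy pullback square with corners $FC, FC_\beta, GC, GC_\beta$ for every $\beta$; passing to the sequential colimit via Convention \ref{HomotopyConventions}(\ref{preseqcolims}) and using $F^{\hf}C \cong \colim_\beta FC_\beta$ (as noted in the proof of Lemma \ref{hf pres terminal}) yields the $\hf$-fibration condition at $C$. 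The moreover follows the same pattern: if $\theta_F$ is a levelwise weak equivalence, two-out-of-three together with Proposition \ref{hfishf} shows $F$ is a homotopy functor; conversely, if $F$ is a homotopy functor, then every $FC \to FC_\beta$ is a weak equivalence, and Convention \ref{HomotopyConventions}(\ref{preseqcolims}) delivers the weak equivalence $FC \to \colim_\beta FC_\beta = F^{\hf}C$. The main piece of bookkeeping is matching up the two decompositions of the pasting rectangle in the forward direction and applying Proposition \ref{pullbackcomposite} in the correct direction; the remaining steps reduce to direct invocations of the cited results.
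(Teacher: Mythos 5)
Your proposal is correct and amounts to the argument the paper defers to (\cite[4.15]{br}), using exactly the ingredients the paper cites: Proposition~\ref{Qfibrations}, the pasting lemmas (Propositions~\ref{pullbackcomposite} and~\ref{properwe}), the fact that the transition maps in the fibrant replacement of Theorem~\ref{SimpFibRep} are acyclic cofibrations, and Convention~\ref{HomotopyConventions}(\ref{preseqcolims}) for passing the objectwise homotopy pullbacks to the colimit computing $F^{\hf}C$. The only (minor) divergence is in the ``moreover'' statement, which you deduce from Proposition~\ref{Qfibrations}(2) together with Lemma~\ref{hf pres terminal}, whereas the paper obtains it by specializing the first statement to $G=\ast$ and applying Proposition~\ref{properwe}; both routes are immediate and equivalent.
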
 

\begin{proof}
The first statement can be proved just as in \cite[4.15]{br} using Convention \ref{HomotopyConventions}, Proposition \ref{Qfibrations}, and the fact that the maps $A_i\rightarrow A_{i+1}$ in the sequential colimit defining the fibrant replacement of Theorem \ref{SimpFibRep} are acyclic cofibrations. The statement about $\hf$-fibrant objects follows from the first statement and Proposition \ref{properwe}.
\end{proof}

The next result is useful for establishing that our model for an $n$-excisive approximation to a functor agrees with that of Goodwillie for all homotopy functors, which we prove in Section \ref{n-excisive model}.  In particular, we prove that this result holds for homotopy functors that need not be levelwise fibrant.

\begin{prop}\label{hfpreshf}
If $F$ is a homotopy functor, then $\theta_F \colon F \Rightarrow F^{\hf}$ is a levelwise weak equivalence.
\end{prop}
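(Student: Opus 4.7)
The plan is to unpack the definition $F^{\hf}(C) = i_!(F)(\mathbb{F}(iC))$, present $\mathbb{F}(iC)$ as a sequential colimit of objects in $\C$ using Convention \ref{HomotopyConventions}(\ref{seqcolimits}), and then combine the homotopy-functor property of $F$ with Convention \ref{HomotopyConventions}(\ref{preseqcolims}) to force the canonical map $\theta_F(C) \colon FC \to F^{\hf}(C)$ to be a weak equivalence in $\D$ for each object $C$ of $\C$.

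More concretely, I would fix $C$ in $\C$ and write $\mathbb{F}(iC) = \colim_{\beta} C_\beta$ with $C_0 = C$ and each $C_\beta$ in $\C$, as afforded by Convention \ref{HomotopyConventions}(\ref{seqcolimits}). Reading the construction of $\mathbb{F}$ in the proof of Theorem \ref{SimpFibRep}, each transition map $C_\beta \to C_{\beta+1}$ arises as a pushout of coproducts of generating acyclic cofibrations, so it is itself an acyclic cofibration in $\B$, and in particular a weak equivalence between objects of $\C$. Proposition \ref{KanPresFiltered} tells me that $i_!(F)$ commutes with filtered (hence sequential) colimits, and Remark \ref{idkan} gives $i_!(F)(C_\beta) = F(C_\beta)$. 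Putting these together yields
\[
F^{\hf}(C) \;=\; i_!(F)\!\left(\colim_\beta C_\beta\right) \;\cong\; \colim_\beta F(C_\beta),
\]
and a chase through Remark \ref{varphi induces theta} identifies $\theta_F(C)$ with the structure map $F(C_0) \to \colim_\beta F(C_\beta)$ of this sequential colimit.

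To finish, I would use that $F$ is a homotopy functor to conclude that each $F(C_\beta) \to F(C_{\beta+1})$ is a weak equivalence in $\D$, and then apply Convention \ref{HomotopyConventions}(\ref{preseqcolims}) to the map of sequential diagrams from the constant diagram at $F(C)$ into $F(C_0) \to F(C_1) \to \cdots$, which is a levelwise weak equivalence. Its induced map on colimits is exactly $\theta_F(C)$, and is therefore a weak equivalence.

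The only mildly delicate point is bookkeeping at the very first step: matching up the colimit presentation supplied by Convention \ref{HomotopyConventions}(\ref{seqcolimits}) with the small-object-argument construction of $\mathbb{F}$ from Theorem \ref{SimpFibRep}, so that both the transition maps are acyclic cofibrations and $\theta_F(C)$ is genuinely the cone map out of $F(C)$. Once that identification is made explicit, the rest of the argument is a direct assembly of results already established in the excerpt, with no further technical obstacles.
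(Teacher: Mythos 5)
Your proposal is correct and follows essentially the same route as the paper's proof: both identify $F^{\hf}(C) \cong \colim_\beta FC_\beta$ via Convention \ref{HomotopyConventions}\eqref{seqcolimits}, Remark \ref{idkan}, and Proposition \ref{KanPresFiltered}, recognize $\theta_{FC}$ as the colimit of the maps $FC \to FC_\beta$ built from the acyclic cofibrations $\varphi^C_\beta$ of Theorem \ref{SimpFibRep}, and conclude using that $F$ is a homotopy functor together with Convention \ref{HomotopyConventions}\eqref{preseqcolims}. The bookkeeping point you flag (matching the $C_\beta$ of the convention with the small object argument stages) is handled the same implicit way in the paper.
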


\begin{proof}
For any object $C$ of $\C$, we have $F^{\hf}(C) = \colim_\beta FC_{\beta}$ by Convention \ref{HomotopyConventions}\eqref{seqcolimits}, Remark \ref{idkan}, and Proposition \ref{KanPresFiltered}. We can then write the map $\theta_{FC}$ as
\[ FC = \colim_\beta FC \longrightarrow \colim_\beta FC_{\beta}, \]
so in particular, $\theta_{FC}$ is the sequential colimit of the morphisms $F(\varphi_{\beta-1}^C \cdot \ldots \cdot \varphi_1^C \cdot \varphi_0^C) \colon FC \rightarrow FC_{\beta}$.  From the proof of Theorem \ref{SimpFibRep}, we know that each $\varphi^C_i$ is a weak equivalence.  Since $F$ is a homotopy functor, it follows from Convention \ref{HomotopyConventions}\eqref{preseqcolims} that $\theta_{FC}$ is a weak equivalence. 
\end{proof}

\section{The $n$-excisive model structure} \label{n-excisive model}

In this section, we establish the existence of a model structure on the category $\Fun(\C,\D)$ whose fibrant objects are the $n$-excisive functors for a given $n$. To ensure that the endofunctor $\widehat{P}_n$ we use to obtain this model structure satisfies axiom (A1) of Theorem \ref{BousfieldQ}, we make use of the functor $(-)^{\hf}$ of Definition \ref{hfdefn}.   So, in addition to the assumptions that we made about the category $\D$ for the $\hf$-model structure, we need to add another mild hypothesis to ensure that  $\widehat{P}_n$ interacts nicely with the $\hf$-model structure.  Thus, we begin this section by making the following definition and then establishing some results for the $\hf$-model structure that we need.

\begin{definition}
An object $U$ of a category $\D$ is \emph{finite relative to a subcategory} $\A$ if, for all limit ordinals $\lambda$ and $\lambda$-sequences  
\begin{center}
\begin{tikzpicture}[node distance=1.8cm, auto]

\node (A) {$A_0$};
\node (B) [right of=A] {$A_1$};
\node (C) [right of=B] {$A_2$};
\node (D) [right of=C] {$\hdots$};
\node (E) [right of=D] {$A_{\beta}$};
\node (F) [right of=E] {$\hdots$};
\node (G) [right of=F] {$(\beta < \lambda)$};

\draw[->] (A) to node {} (B);
\draw[->] (B) to node {} (C);
\draw[->] (C) to node {} (D);
\draw[->] (D) to node {} (E);
\draw[->] (E) to node {} (F);

\end{tikzpicture} 
\end{center}
in $\D$ such that each $A_{\beta} \rightarrow A_{\beta + 1}$ is in $\A$, the map 
\begin{center}
\begin{tikzpicture}[node distance=4.5cm, auto]

\node (A) {$\colim_\beta \Hom_{\D} (U, A_{\beta})$};
\node (B) [right of=A] {$\Hom_{\D} \left(U, \colim_\beta A_{\beta}\right)$};

\draw[transform canvas={yshift=0.6ex}, ->] (A) to node {} (B);

\end{tikzpicture} 
\end{center}
is an isomorphism. 
\end{definition}
For this section, we assume the following.
\begin{convention} \label{nexcisiveconv}
The set $J$ of generating cofibrations of $\D$ can be chosen such that the codomain of each map is finite relative to $J$, in addition to Conventions \ref{GoodwilleConventions} and \ref{HomotopyConventions}. 
\end{convention}

\begin{remark} 
Note that since $\D$ is cofibrantly generated by assumption, the set $J$ of generating cofibrations can always be chosen such that the domain of each map is finite relative to $J$. The extra condition on the codomain is satisfied, for example, by any finitely generated model category, as described in \cite{hovey}.
\end{remark}

Our assumptions on $\D$ guarantee the following result. The proof is the same as the one for finitely generated model categories, see \cite[7.4.1]{hovey}.

\begin{lemma} \label{FiniteGenLemma}
Suppose that $\D$ satisfies the conditions of Convention \ref{nexcisiveconv}, $\lambda$ is an ordinal, $X,Y \colon \lambda \rightarrow \D$ are $\lambda$-sequences of acyclic cofibrations, and $p \colon X \rightarrow Y$ is a natural transformation such that $p_{\beta} \colon X_{\beta} \rightarrow Y_{\beta}$ is a fibration for all $\beta < \lambda$. Then $\colim \, p_{\beta} \colon \colim \, X_{\beta} \rightarrow \colim \, Y_{\beta}$ is a fibration that is a weak equivalence if each $p_\beta$ is.
\end{lemma}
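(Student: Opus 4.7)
The plan is to prove both statements (fibration and acyclic fibration) uniformly by verifying the right lifting property against the generating cofibrations and generating acyclic cofibrations, respectively. Since $\D$ is cofibrantly generated, it suffices to show that $\colim p_\beta$ lifts against every map $u \colon U \to V$ in $J$ (for the acyclic fibration case) or every generating acyclic cofibration (for the fibration case). Fix such a $u$ and a commutative square with $u$ on the left and $\colim p_\beta$ on the right; the task is to produce a diagonal lift.

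The key input is Convention \ref{nexcisiveconv}: both $U$ and $V$ are finite relative to $J$. Since each $X_\beta \to X_{\beta+1}$ and $Y_\beta \to Y_{\beta+1}$ is an acyclic cofibration, hence a cofibration built from $J$, finiteness applies to these sequences. First, I would use finiteness of $U$ to factor $U \to \colim X_\beta$ through some $X_{\alpha_0}$, and finiteness of $V$ to factor $V \to \colim Y_\beta$ through some $Y_{\alpha_1}$. Setting $\alpha \geq \max(\alpha_0, \alpha_1)$ and composing with the transition maps of the $\lambda$-sequences, I obtain candidate morphisms $U \to X_\alpha$ and $V \to Y_\alpha$.

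The subtle point is that these two morphisms are not yet known to form a commutative square with $u$ and $p_\alpha$; they only do so after composition with $Y_\alpha \to \colim Y_\beta$. Here I would invoke finiteness of $U$ a second time, in its injectivity form: the two maps $U \to Y_\alpha$ given by $p_\alpha \circ (U \to X_\alpha)$ and $(V \to Y_\alpha) \circ u$ become equal in $\colim Y_\beta$, and so by the bijection defining finiteness they are already equalized by some $Y_\alpha \to Y_{\alpha'}$ with $\alpha' \geq \alpha$. Replacing $\alpha$ by $\alpha'$, the square commutes at the finite stage. Then, since $p_{\alpha'}$ is a (acyclic) fibration and $u$ is an acyclic (respectively, ordinary) cofibration, a diagonal lift $V \to X_{\alpha'}$ exists, and postcomposing with $X_{\alpha'} \to \colim X_\beta$ yields the desired lift of the original square.

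The main obstacle is precisely this second finiteness step: without the new assumption in Convention \ref{nexcisiveconv} that codomains of generating cofibrations are finite relative to $J$, one cannot factor $V \to \colim Y_\beta$ through a finite stage at all, so the square cannot be realized at any $\alpha < \lambda$. Once both endpoints of $u$ are finite, the rest of the argument is routine and parallels the proof for finitely generated model categories given in \cite[7.4.1]{hovey}.
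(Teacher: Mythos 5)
Your proof is correct and is essentially the paper's own argument: the paper gives no details here, deferring to the finitely generated case in \cite[7.4.1]{hovey}, and your finite-stage lifting argument (factor the lifting problem through some $X_{\alpha}$, $Y_{\alpha}$ using finiteness of the domain and codomain, use injectivity of the colimit comparison map to make the square commute at a later stage, lift there, and push forward) is exactly that argument. The only caveat, inherited from the paper rather than introduced by you, is that the fibration half requires finiteness of the domains and codomains of the generating \emph{acyclic} cofibrations relative to the (acyclic) cofibrations, which is part of Hovey's ``finitely generated'' hypothesis but is not literally stated in Convention \ref{nexcisiveconv}, which only addresses the generating cofibrations.
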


Now we establish the compatibility with the functor $(-)^{\hf}$ that we need.

\begin{prop} \label{hfpresproj} 
The endofunctor $(-)^{\hf}$ on $\Fun(\C,\D)$ preserves levelwise fibrations. 
\end{prop}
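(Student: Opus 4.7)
The plan is to fix a levelwise fibration $\alpha : F \to G$ in $\Fun(\C,\D)$ and show that for each object $C$ of $\C$, the map $\alpha^{\hf}(C) : F^{\hf}(C) \to G^{\hf}(C)$ is a fibration in $\D$.

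First, I would rewrite $F^{\hf}(C)$ as a sequential colimit whose terms are values of $F$ at objects of $\C$. By Convention \ref{HomotopyConventions}(\ref{seqcolimits}), express $\mathbb{F}C \cong \colim_\beta C_\beta$ with each $C_\beta$ in $\C$. Combining Proposition \ref{KanPresFiltered}, which says that $i_!(F)$ preserves filtered colimits, with Remark \ref{idkan}, which says that $i_!(F)(C_\beta) \cong F(C_\beta)$ whenever $C_\beta$ lies in $\C$, we obtain
\[ F^{\hf}(C) = i_!(F)(\mathbb{F}C) \cong \colim_\beta i_!(F)(C_\beta) \cong \colim_\beta F(C_\beta), \]
and likewise $G^{\hf}(C) \cong \colim_\beta G(C_\beta)$. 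Under these identifications, $\alpha^{\hf}(C)$ is the induced map of sequential colimits corresponding to the natural transformation whose $\beta$-th component is $\alpha_{C_\beta} : F(C_\beta) \to G(C_\beta)$.

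Next, since $\alpha$ is a levelwise fibration and each $C_\beta$ lies in $\C$, every component $\alpha_{C_\beta}$ is a fibration in $\D$. Thus $\alpha^{\hf}(C)$ is a sequential colimit of a natural transformation whose components are all fibrations, so Convention \ref{HomotopyConventions}(\ref{preseqcolims}) --- which guarantees that fibrations in $\D$ are preserved under sequential colimits --- yields that $\alpha^{\hf}(C)$ is itself a fibration in $\D$, as required.

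The crux of the argument is the first step: identifying $F^{\hf}(C)$ as a sequential colimit of values $F(C_\beta)$ at objects $C_\beta \in \C$. This bypasses any need to analyze $i_!(F)$ on objects of $\B$ outside of $\C$ and reduces the fibration claim to a direct invocation of Convention \ref{HomotopyConventions}(\ref{preseqcolims}); the main obstacle is therefore essentially hypothesis-checking rather than an involved construction.
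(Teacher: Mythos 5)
Your argument is correct, and the first (and main) step is exactly the paper's: both identify $F^{\hf}(C)\cong\colim_\beta FC_\beta$ and $G^{\hf}(C)\cong\colim_\beta GC_\beta$ via Convention \ref{HomotopyConventions}(\ref{seqcolimits}), Remark \ref{idkan}, and Proposition \ref{KanPresFiltered}, and observe that $\alpha^{\hf}_C$ is the colimit of the fibrations $\alpha_{C_\beta}$. The only difference is the closing citation. You conclude by invoking Convention \ref{HomotopyConventions}(\ref{preseqcolims}) directly, reading it as the statement that a map of sequential diagrams which is a fibration in each degree has fibration colimit; this is a legitimate reading, and indeed the paper itself uses the convention in precisely this way in the proof of Proposition \ref{Pnpreshf}. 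The paper's proof instead points out that the transition maps in the sequence defining $\colim_\beta FC_\beta$ are acyclic cofibrations (from the construction in Theorem \ref{SimpFibRep}), which sets up an application of Lemma \ref{FiniteGenLemma}; that lemma is proved from the finiteness hypothesis of Convention \ref{nexcisiveconv} rather than assumed, so the paper's route trades reliance on the fibration clause of Convention \ref{HomotopyConventions}(\ref{preseqcolims}) for the finite-relative-to-$J$ condition on generating (acyclic) cofibrations, and it also records the extra ``acyclic cofibration'' information that is reused elsewhere. Either justification closes the argument in the setting of Section \ref{n-excisive model}, where both conventions are in force.
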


\begin{proof}
Let $\alpha \colon F \rightarrow G$ be a levelwise fibration. Since $F^{\hf}(C) = \colim_\beta FC_{\beta}$ for any object $C$ in $\C$, the component $\alpha^{\hf}_C \colon F^{\hf}(C) \rightarrow G^{\hf}(C)$ at $C$ is induced by the morphisms $\alpha_{C_{\beta}} \colon FC_{\beta} \rightarrow GC_{\beta}$ for all $\beta$.  Since each of these maps $\alpha_{C_\beta}$ is a fibration in $\D$ by assumption, and the maps in the colimit sequence defining $\colim_\beta \, FC_{\beta}$ are all acyclic cofibrations as explained in the proof of Theorem \ref{SimpFibRep}. 
\end{proof}

We now turn our attention to $n$-excisive approximations of functors, using Definition \ref{Pndefn}.  In addition, we replace $F$ with $F^{\hf}$ as defined in Definition \ref{hfdefn}, making it possible to apply some results from \cite{goodwillie3} that only hold for homotopy functors.  

\begin{definition}\label{Pnhdefn}
For a functor $F$ in $\Fun(\C,\D)$, we define the functor $\widehat P_nF$ by
\[ \widehat P_nF:=\colim_k \left(F^{\hf}\rightarrow T_n (F^{\hf})\rightarrow T_n^2(F^{\hf}) \rightarrow \dots \rightarrow T_n^k(F^{\hf})\rightarrow \dots \right). \] 
Using $T_n^\infty$ to represent this colimit, we have 
\[ \widehat P_nF=T_n^{\infty}F^{\hf}. \]
There is a morphism $F\rightarrow \widehat P_nF$ given by the composite
\begin{center}
\begin{tikzpicture}[node distance=2.2cm, auto, scale=1]

\node (A) {$F$};
\node (B) [right of=A] {$F^{\hf}$};
\node (C) [node distance=3.5cm, right of=B] {$T_n^\infty(F^{\hf}) = \widehat P_nF$,};

\draw[->] (A) to node {$\theta_F$} (B);
\draw[->] (B) to node {$\iota_{F^{\hf}}$} (C);

\end{tikzpicture}
\end{center}
where $\iota$ is the natural transformation whose component at $G$ is induced by the natural maps $G\rightarrow T_nG\rightarrow T_n^2G\dots$.  Since this construction is natural in $F$, it induces a natural transformation  $\widehat p_n \colon \id_{\Fun(\C,\D)}\Rightarrow \widehat P_n$.  We sometimes omit the subscript on $\widehat p_n$ when it can be understood from context.
\end{definition}

We note that by Lemma \ref{seqcolimandhocolim}, $\widehat P_nF$ is weakly equivalent to $P_n(F^{\hf})$.  One can prove that $\widehat P_nF$ is an $n$-excisive functor using a proof similar to that of \cite[1.8]{goodwillie3}. The argument uses the fact that $F^{\hf}$ is a homotopy functor and is the reason we replace $F$ with $F^{\hf}$ in the definition of $\widehat P_nF$. By Lemma \ref{seqcolimandhocolim} and Proposition \ref{hfpreshf}, $\widehat P_nF$ is weakly equivalent to Goodwillie's construction of $P_nF$ when $F$ is a homotopy functor.

\begin{prop}\label{Pnpreshf} 
The endofunctor $\widehat P_n$ on the category $\Fun(\C,\D)$ preserves $\hf$-fibrations.
\end{prop}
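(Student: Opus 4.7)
The plan is to verify the two conditions from the characterization of $\hf$-fibrations in Proposition \ref{fibranthf} for the morphism $\widehat P_n \alpha \colon \widehat P_n F \rightarrow \widehat P_n G$ induced by an $\hf$-fibration $\alpha \colon F \rightarrow G$: namely, that $\widehat P_n \alpha$ is a levelwise fibration in $\Fun(\C,\D)$, and that for every weak equivalence $A \rightarrow B$ in $\C$ the induced naturality square is a homotopy pullback square in $\D$.

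The homotopy pullback condition comes essentially for free. Since $\widehat P_n F$ and $\widehat P_n G$ are both $n$-excisive, they are in particular homotopy functors, so for any weak equivalence $A \rightarrow B$ in $\C$ the two horizontal arrows $\widehat P_n F(A) \rightarrow \widehat P_n F(B)$ and $\widehat P_n G(A) \rightarrow \widehat P_n G(B)$ are weak equivalences in $\D$. A commutative square in a right proper model category in which two parallel edges are weak equivalences is automatically a homotopy pullback square; this follows from Proposition \ref{properwe}\eqref{properweb} (the statement holds symmetrically for horizontal rather than vertical weak equivalences, by relabeling the square).

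For the levelwise fibration condition, I would follow the formula $\widehat P_n F = T_n^\infty F^{\hf}$ and handle each stage in turn. First, Proposition \ref{hfpresproj} tells us that $(-)^{\hf}$ preserves levelwise fibrations, so $\alpha^{\hf} \colon F^{\hf} \rightarrow G^{\hf}$ is again a levelwise fibration. Second, I would show that $T_n$, which is built pointwise as a homotopy limit over $\P_0(\mathbf{n+1})$ of the diagram $U \mapsto F(A \ast U)$, preserves levelwise fibrations; this uses axiom (SM7) for $\D$ together with the observation from Remark \ref{c:holimPn} that the chosen simplicial model for homotopy limits incorporates a simplicial fibrant replacement, ensuring that fibrations are produced even when the input diagrams are not levelwise fibrant. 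Iterating gives a sequence of levelwise fibrations $T_n^k F^{\hf} \rightarrow T_n^k G^{\hf}$, and Convention \ref{HomotopyConventions}\eqref{preseqcolims} ensures that passing to the sequential colimit in $\D$ objectwise preserves the fibration property, yielding the desired levelwise fibration $\widehat P_n \alpha \colon \widehat P_n F \rightarrow \widehat P_n G$.

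The main obstacle is the middle step: verifying carefully that the Bousfield--Kan-style model for homotopy limits, combined with simplicial fibrant replacement, sends a levelwise fibration of $\P_0(\mathbf{n+1})$-shaped diagrams of simplicial functors to a levelwise fibration in $\D$. Once this technical point is in place, the two conditions of Proposition \ref{fibranthf} both hold for $\widehat P_n \alpha$, completing the proof that $\widehat P_n$ preserves $\hf$-fibrations.
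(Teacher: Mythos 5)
Your proof is correct, and it reaches the conclusion by a slightly different route than the paper. You verify the two conditions of Proposition \ref{fibranthf} (a projective fibration is an $\hf$-fibration if and only if the naturality squares over weak equivalences $A \rightarrow B$ of $\C$ are homotopy pullbacks), whereas the paper instead verifies the Bousfield $Q$-fibration condition of Proposition \ref{Qfibrations} (projective fibration plus the square formed by the unit maps $\widehat P_nF \rightarrow (\widehat P_nF)^{\hf}$ and $\widehat P_nG \rightarrow (\widehat P_nG)^{\hf}$ being a homotopy pullback). The two characterizations are interchangeable here, and both arguments ultimately turn on the same observation: $\widehat P_nF$ and $\widehat P_nG$ are homotopy functors, so the relevant horizontal maps are weak equivalences and the squares are homotopy pullbacks by Proposition \ref{properwe}\eqref{properweb} (your remark that this applies equally to horizontal weak equivalences, by transposing the square, is fine). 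The levelwise fibration step is the same in both proofs, using Proposition \ref{hfpresproj}, preservation of fibrations by the chosen homotopy limit model, and Convention \ref{HomotopyConventions}\eqref{preseqcolims}; the technical point you flag about the holim model is exactly what the paper asserts without further elaboration, so you are no worse off there.

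One small caveat: you justify that $\widehat P_nF$ is a homotopy functor by saying it is $n$-excisive and ``$n$-excisive implies homotopy functor.'' That implication is true (given a weak equivalence $f \colon A \rightarrow B$ one can build a strongly homotopy cocartesian $(n+1)$-cube with $A$ at $\varnothing$ and $B$ elsewhere, and cartesianness of its image forces $F(f)$ to be an equivalence), but it is nowhere proved in the paper, and the paper only asserts $n$-excisiveness of $\widehat P_nF$ by analogy with Goodwillie's argument. The cleaner justification, and the one the paper uses, is direct: $\widehat P_nF = T_n^{\infty}F^{\hf}$, $F^{\hf}$ is a homotopy functor by Proposition \ref{hfishf}, and $T_n$ and sequential colimits preserve levelwise weak equivalences, so $\widehat P_nF$ is a homotopy functor without any appeal to excisiveness. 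Substituting that justification makes your argument self-contained relative to the paper's results.
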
 

\begin{proof} 
By Proposition \ref{Qfibrations}, it suffices to prove that if $F\rightarrow G$ is an $\hf$-fibration, then $\widehat P_nF\rightarrow \widehat P_nG$ is a projective fibration and 
\begin{center} 
\begin{tikzpicture}[node distance=2.2cm, auto, scale=1]\label{Pnfibcube}

\node (A) {$\widehat P_nF$};
\node (B) [right of=A] {$(\widehat P_nF)^{\hf}$};
\node (C) [node distance=1.5cm, below of=A] {$\widehat P_nG$};
\node (D) [node distance=1.5cm, below of=B] {$(\widehat P_nG)^{\hf}$};

\draw[->] (A) to node {} (B);
\draw[->] (A) to node [swap] {} (C);
\draw[->] (C) to node [swap] {} (D);
\draw[->] (B) to node {} (D);

\end{tikzpicture}
\end{center}
is a homotopy pullback square. The fact that $\widehat P_nF\rightarrow \widehat P_nG$ is a projective fibration follows from the definition of $\widehat P_n$, Proposition \ref{hfpresproj}, the fact that homotopy limits preserve fibrations, and Convention \ref{HomotopyConventions}(\ref{preseqcolims}). To confirm that the diagram above is a homotopy pullback square, recall that $\widehat P_nF$ and $\widehat P_nG$ are homotopy functors by Proposition \ref{hfishf}. It follows that the horizontal maps are weak equivalences by Proposition \ref{hfpreshf}, and the square is a homotopy pullback by Proposition \ref{properwe}.
\end{proof}

With the preceding proposition and definitions in place, we can establish the existence of the $n$-excisive model structure.  This theorem is also proved by Biedermann and R\"ondigs in \cite[5.8]{br} using a different, but naturally equivalent, model for the $n$-excisive approximation of a functor.  To obtain cofibrant generation of this model structure in Section \ref{s:nexccof}, we need to take a localization of the $\hf$-model structure, but arguments similar to the ones presented here can be used to place an $n$-excisive model structure on $\Fun(\C,\D)$ with the projective model structure instead.

\begin{theorem}\label{t:nexcisive}
Under the assumptions of Convention \ref{nexcisiveconv}, the category $\Fun(\C, \D)$ has a right proper model structure in which a morphism $F \rightarrow G$ is a weak equivalence if $\widehat P_n F \rightarrow \widehat P_n G$ is a weak equivalence in the $\hf$-model structure, and is a cofibration precisely if it is a cofibration in the $\hf$-model structure.
\end{theorem}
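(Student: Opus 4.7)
The plan is to verify that the endofunctor $\widehat{P}_n$ and natural transformation $\widehat{p}_n \colon \id \Rightarrow \widehat{P}_n$ on $\Fun(\C,\D)_{\hf}$ satisfy the axioms (A1), (A2), and (A3') of Corollary \ref{simplifiedA3}, applied with $\M = \Fun(\C,\D)_{\hf}$. Once all three axioms are established, Corollary \ref{simplifiedA3} directly produces the required right proper model structure, and since cofibrations are inherited from the ambient model structure, they coincide with the $\hf$-cofibrations, which are the projective cofibrations by Theorem \ref{hfmodel}.

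For axiom (A1), I would start with an $\hf$-weak equivalence $F \to G$, so that $F^{\hf} \to G^{\hf}$ is a levelwise weak equivalence between homotopy functors by Proposition \ref{hfishf}. Using the simplicial models for homotopy limits and colimits discussed in Remark \ref{c:holimPn}, each $T_n^k$ preserves levelwise weak equivalences between homotopy functors, and the sequential colimit computing $T_n^\infty$ also preserves them by Convention \ref{HomotopyConventions}(3). Hence $\widehat{P}_n F \to \widehat{P}_n G$ is a levelwise weak equivalence, and in particular an $\hf$-weak equivalence.

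For axiom (A2), I would use the fact, noted after Definition \ref{Pnhdefn}, that $\widehat{P}_n F$ is levelwise weakly equivalent to Goodwillie's $P_n$ applied to the homotopy functor $F^{\hf}$. Goodwillie's foundational result \cite[1.8]{goodwillie3} tells us that $p_n(P_n H)$ and $P_n(p_n H) \colon P_n H \to P_n^2 H$ are weak equivalences for any homotopy functor $H$. Transferring this comparison along the natural equivalence above, and using that both $\widehat{P}_n F$ and $\widehat{P}_n^2 F$ are homotopy functors (so that Proposition \ref{hfpreshf} reduces $\hf$-weak equivalences between them to levelwise weak equivalences), I obtain that $\widehat{p}_{n,\widehat{P}_n F}$ and $\widehat{P}_n(\widehat{p}_{n,F})$ are $\hf$-weak equivalences.

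For axiom (A3'), since $\widehat{P}_n F$ is always a homotopy functor, a square in the image of $\widehat{P}_n$ is an $\hf$-homotopy pullback square if and only if it is a levelwise homotopy pullback square in $\D$, so it suffices to prove that $\widehat{P}_n$ preserves levelwise homotopy pullback squares. The functor $(-)^{\hf}$ preserves them because it is computed pointwise as a sequential colimit along acyclic cofibrations of evaluations of $F$, invoking Convention \ref{HomotopyConventions}(3); the functor $T_n$ preserves them because it is constructed from a homotopy limit, and homotopy limits commute with homotopy pullbacks; and the sequential colimit $T_n^\infty$ preserves them again by Convention \ref{HomotopyConventions}(3). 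The main obstacle I expect is the careful verification of (A2), since it requires a natural comparison between $\widehat{P}_n$ and the classical $P_n$ on homotopy functors together with a careful tracking of which form of weak equivalence ($\hf$ versus levelwise) is in play at each stage.
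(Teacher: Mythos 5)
Your overall strategy — apply Corollary \ref{simplifiedA3} to $\M=\Fun(\C,\D)_{\hf}$ with $Q=\widehat P_n$ and check (A1), (A2), (A3') — is exactly the paper's, and your (A1) argument matches the paper's. Your (A2) sketch is essentially the paper's argument in compressed form: the paper does not quote Goodwillie's \cite[1.8]{goodwillie3} wholesale (the setting here is a general simplicial model category), but re-runs the relevant steps — that $\widehat P_nF$ is $n$-excisive, that $H\to T_nH$ and hence $H\to T_n^\infty H$ are levelwise equivalences for $n$-excisive $H$, and a commutation argument ($T_n$ with the sequential colimit, via Convention \ref{HomotopyConventions}(3)) for $\widehat P_n\iota_{F^{\hf}}$ — so your ``transfer'' would have to be fleshed out along those lines, in particular because $\widehat P_n^2F=T_n^\infty((\widehat P_nF)^{\hf})$ involves an extra $(-)^{\hf}$ that must be removed via Proposition \ref{hfpreshf}, as you anticipate. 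That part is a matter of detail rather than a wrong idea.

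The genuine gap is in (A3'). The axiom requires $\widehat P_n$ to preserve homotopy pullback squares \emph{of the $\hf$-model structure}, and such squares need not be levelwise homotopy pullback squares: for instance, if $F\to G$ is an $\hf$-equivalence that is not a levelwise equivalence, the square with columns $F\to G$ and $\ast\to\ast$ is an $\hf$-homotopy pullback (Proposition \ref{properwe}(2) in $\Fun(\C,\D)_{\hf}$) but not a levelwise one. Your reduction only identifies the \emph{output} condition (``a square of homotopy functors is an $\hf$-homotopy pullback iff it is a levelwise one''), and from that you conclude ``it suffices to prove $\widehat P_n$ preserves levelwise homotopy pullback squares''; this does not suffice, since the input square is only assumed to be an $\hf$-homotopy pullback. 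What is missing is precisely the paper's extra step: factor one leg $H\to K$ as an $\hf$-equivalence followed by an $\hf$-fibration $H'\to K$, note that because $\hf$-fibrations are projective fibrations the strict pullback $H'\times_K G$ computes the homotopy pullback in both the $\hf$- and projective structures, use Proposition \ref{Pnpreshf} (that $\widehat P_n$ preserves $\hf$-fibrations) to identify the homotopy pullback of the image cospan, and use that $(-)^{\hf}$ converts the $\hf$-equivalence $F\to H'\times_K G$ into a levelwise equivalence and preserves projective homotopy pullbacks; only after this conversion does your argument with $T_n$ and Convention \ref{HomotopyConventions}(3) apply. (Alternatively, you could verify axiom (A3) of Theorem \ref{BousfieldQ} directly rather than (A3'): there the input square is a strict pullback along an $\hf$-fibration between $\hf$-fibrant objects, hence already a levelwise homotopy pullback, and your levelwise-preservation argument, combined with the fact that $\hf$-equivalences between homotopy functors are levelwise equivalences, would close the argument — but as written your sufficiency claim for (A3') is false and the proof is incomplete.)
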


\begin{proof}
We apply Corollary \ref{simplifiedA3} to $\Fun(\C,\D)_{\hf}$ and the endofunctor $\widehat{P}_n$.  In particular, we  verify axioms (A1), (A2), and (A3').

To prove  (A1), assume that $F\rightarrow G$ is an $\hf$-equivalence, i.e, that  $F^{\hf} \to  G^{\hf}$ is a levelwise weak equivalence.   Our choice of model for homotopy limits preserves such weak equivalences, so $T_n$ does as well.  Then Convention \ref{HomotopyConventions}\eqref{preseqcolims} guarantees that $\widehat P_n F \to \widehat P_n G$ is a levelwise weak equivalence.  Since the functor $(-)^{\hf}$ preserves weak equivalences (by the proof of Theorem \ref{hfmodel}), it follows that $\widehat{P}_nF\rightarrow \widehat{P}_nG$ is an $\hf$-equivalence.  

For (A2), we need to prove that the natural transformations $\widehat p_{ \widehat P_nF} \colon \widehat P_nF\rightarrow \widehat P_n\widehat P_nF$ and $\widehat P_n\widehat p_{F} \colon \widehat P_nF\rightarrow \widehat P_n\widehat P_n F$ are weak equivalences in the $\hf$-model structure.  Since $(-)^{\hf}$ preserves levelwise weak equivalences, it suffices to prove the stronger result that these natural transformations are levelwise weak equivalences.  

For any object $A$ in $\C$, the $(n+1)$-cubical diagram given by $U \mapsto A \ast U$ is a strongly homotopy cocartesian diagram, so applying any $n$-excisive functor $H$ to it produces a homotopy cartesian diagram. Then the map from the initial object in this diagram to the homotopy limit of the rest of the diagram is a levelwise weak equivalence.  By definition, this map is $H \rightarrow T_nH$, and since $T_n$ preserves levelwise weak equivalences, Convention \ref{HomotopyConventions}(\ref{preseqcolims}) guarantees that the natural transformation $\iota_H \colon H\rightarrow T_n^{\infty}H$ is a levelwise weak equivalence as well.

To see that $\widehat  p_{\widehat P_nF} \colon \widehat P_nF\rightarrow \widehat P_n\widehat P_nF$ is a levelwise weak equivalence, consider, for an arbitrary functor $X$, the commutative square
\vspace{-2mm}
\begin{center}
\begin{tikzpicture}[node distance=1.7cm, auto, scale=1]

\node (A) {$X$};
\node (B) [node distance=4cm, right of=A] {$X^{\hf}$};
\node (C) [below of=A] {$T_n^{\infty}X$};
\node (D) [below of=B] {$T_n^{\infty}X^{\hf}=\widehat P_nX.$};

\draw[->] (A) to node {$\theta_X$} (B);
\draw[->] (A) to node [swap] {$\iota_X$} (C);
\draw[->] (B) to node {$\iota_{X^{\hf}}$} (D);
\draw[->] (C) to node [swap] {$T_n^{\infty}\theta_X$} (D);
\end{tikzpicture}
\end{center}
\vspace{-1mm}
When $X=\widehat P_nF$, the horizontal maps are weak equivalences by Proposition \ref{hfishf} since $\widehat P_nF$ is a homotopy functor and $T_n^{\infty}$ preserves weak equivalences.  As noted above, the left vertical map is also a weak equivalence since  $\widehat P_nF$ is $n$-excisive. It follows that the composition of the right vertical and top horizontal arrows is a weak equivalence, but this composite is $\widehat p_{\widehat P_nF}$.

To prove that $\widehat P_n\widehat p_F$ is a levelwise weak equivalence, recall that it is the composite
\vspace{-2mm}
\begin{center}
\begin{tikzpicture}[node distance=4cm, auto, scale=1]

\node (A) {$\widehat P_nF$};
\node (B) [node distance=3cm, right of=A] {$\widehat P_n(F^{\hf})$};
\node (C) [right of=B] {$\widehat P_n(T_n^{\infty}(F^{\hf}))$};

\draw[->] (A) to node {$\widehat P_n\theta_F$} (B);
\draw[->] (B) to node {$\widehat P_n\iota_{F^{\hf}}$} (C);
\end{tikzpicture}
\end{center}
\vspace{-1mm}
The map $\widehat P_n\theta_F$ is a levelwise weak equivalence by Theorem \ref{hfmodel}, Convention \ref{HomotopyConventions}(\ref{preseqcolims}), and the fact that $T_n$ preserves weak equivalences.  To see that $\widehat P_n\iota_{F^{\hf}}$ is a levelwise weak equivalence, consider the commutative diagram
\vspace{-2mm}
\begin{center}
\begin{tikzpicture}[node distance=1.8cm, auto, scale=1]

\node (A) {$T_n^{\infty}(F^{\hf})$};
\node (B) [node distance=5cm, right of=A] {$T_n^{\infty}(T_n^{\infty}F^{\hf})$};
\node (C) [below of=A] {$T_n^{\infty}(F^{\hf})^{\hf}$};
\node (D) [below of=B] {$T_n^{\infty}(T_n^{\infty}F^{\hf})^{\hf}$};

\draw[->] (A) to node {$T_n^{\infty}(\iota_{F^{\hf}})$} (B);
\draw[->] (A) to node [swap] {$T_n^{\infty}(\theta_{F^{\hf}})$} (C);
\draw[->] (B) to node {$T_n^{\infty}(\theta_{T_n^{\infty}F^{\hf}})$} (D);
\draw[->] (C) to node [swap] {$T_n^{\infty}(\iota_{F^{\hf}})^{\hf}$} (D);
\end{tikzpicture}
\end{center}
\vspace{-1mm}
and note that the bottom horizontal arrow is $\widehat P_n\iota_{F^{\hf}}.$ The vertical maps are levelwise weak equivalences by Propositions \ref{hfishf} and \ref{hfpreshf}, and the fact that $T_n^{\infty}$ preserves weak equivalences.  We can use an argument similar to the one used in the last paragraph of the proof of \cite[1.8]{goodwillie3} to prove that the top horizontal map is a weak equivalence. In particular, it suffices to show that $T_n^{\infty}(F^{\hf})\rightarrow T_n^{\infty}(T_nF^{\hf})$ is a weak equivalence.  By Convention \ref{HomotopyConventions}\eqref{preseqcolims} and the fact that homotopy limits commute, the target of this map is weakly equivalent to $T_n T_n^{\infty} (F^{\hf})$.  However, $T_n^{\infty}(F^{\hf})$ is weakly equivalent to $T_nT_n^{\infty}(F^{\hf})$ because $T_n^\infty(F^{\hf})=\widehat P_nF$ is an $n$-excisive functor.  Hence, the bottom map in the diagram, which is $\widehat P_n\widehat p_{F}$, is a levelwise weak equivalence as well, completing the proof that (A2) holds.

Consider a homotopy pullback square
\vspace{-2mm}
\begin{center}
\begin{tikzpicture}[node distance=2cm, auto, scale=1]

\node (A) {$F$};
\node (B) [right of=A] {$G$};
\node (C) [node distance=1.6cm, below of=A] {$H$};
\node (D) [node distance=1.6cm, below of=B] {$K$,};

\draw[->] (A) to node {} (B);
\draw[->] (A) to node [swap] {} (C);
\draw[->] (C) to node [swap] {} (D);
\draw[->] (B) to node {} (D);

\end{tikzpicture}
\end{center}
in the $\hf$-model structure.  Since $(-)^{\hf}$ preserves levelwise weak equivalences,  it suffices to show that there is a levelwise weak equivalence from $\widehat P_nF$ to the homotopy pullback (in the $\hf$-model structure) of $\widehat P_nH\rightarrow \widehat P_nK\leftarrow \widehat P_nG$  to prove (A3').

We factor $H \to K$ into an $\hf$-equivalence $H\rightarrow H'$ followed by an $\hf$-fibration $H'\rightarrow K$ so that the homotopy pullback of $G\rightarrow H\leftarrow K$ in the $\hf$-model structure, which is right proper, is the strict pullback $H'\times _K G$. Note that $H'\times _K G$ is also the homotopy pullback of $H'\rightarrow K\leftarrow G$ in both the projective and $\hf$-model structures since every $\hf$-fibration is a projective fibration. 

Since $\widehat P_n$ preserves $\hf$-fibrations and $\hf$-weak equivalences (Proposition \ref{Pnpreshf} and axiom (A1)), $\widehat P_nH\rightarrow \widehat P_nH'\rightarrow \widehat P_nK$ is also a factorization via an $\hf$-equivalence and $\hf$-fibration.  As above, the strict pullback $\widehat P_nH'\times _{\widehat P_nK}\widehat P_nG$ is also the homotopy pullback of $\widehat P_nH\rightarrow \widehat P_nK\leftarrow \widehat P_nG$ in the $\hf$-model structure, and of $\widehat P_nH'\rightarrow \widehat P_nK\leftarrow \widehat P_nG$ in both model structures.  Hence, to complete the proof, it suffices to show there is a levelwise weak equivalence from $\widehat P_nF$ to the homotopy pullback of $\widehat P_nH'\rightarrow \widehat P_nK\leftarrow \widehat P_nG.$

We now consider the diagram
\vspace{-2mm}
\begin{equation}\label{H'square}
\begin{tikzpicture}[node distance=2cm, auto, scale=1]

\node (A) {$F$};
\node (B) [right of=A] {$G$};
\node (C) [node distance=1.6cm, below of=A] {$H'$};
\node (D) [node distance=1.6cm, below of=B] {$K$.};

\draw[->] (A) to node {} (B);
\draw[->] (A) to node [swap] {} (C);
\draw[->] (C) to node [swap] {} (D);
\draw[->] (B) to node {} (D);

\end{tikzpicture}
\end{equation}
By assumption, $F^{\hf}$ is levelwise equivalent to $(H'\times _K G)^{\hf}$. But $H'\times _K G$ is the  homotopy pullback of $H'\rightarrow K\leftarrow G$ in the projective model structure  and $(-)^{\hf}$ preserves such homotopy pullbacks, making $F^{\hf}$ levelwise equivalent to the homotopy pullback of $(H')^{\hf}\rightarrow K^{\hf}\leftarrow G^{\hf}$.  That is, applying $(-)^{\hf}$ to \eqref{H'square}
yields a homotopy pullback in the projective model structure.  Then Convention \ref{HomotopyConventions} and  the fact that $T_n$, as a homotopy limit, preserves homotopy pullback diagrams ensure that  applying $\widehat P_n$ to \eqref{H'square}
produces a homotopy pullback in the projective model structure.  
\end{proof}

Proposition \ref{QpresWE} allows us to characterize fibrations and fibrant objects in the $n$-excisive model structure as follows.  Biedermann and R\"ondigs provide a similar characterization of fibrations  in \cite[5.9]{br}.

\begin{prop} \label{fibrantexcisive}
A morphism $\alpha \colon F\rightarrow G$ in $\Fun(\C,\D)$ is a fibration in the $n$-excisive model structure if and only if it is a fibration in the $\hf$-model structure on $\Fun(\C,\D)$ and the diagram
\vspace{-2mm}
\begin{center}
\begin{tikzpicture}[node distance=2.2cm, auto, scale=1]

\node (A) {$F$};
\node (B) [right of=A] {$\widehat P_nF$};
\node (C) [node distance=1.6cm, below of=A] {$G$};
\node (D) [node distance=1.6cm, below of=B] {$\widehat P_nG$};

\draw[->] (A) to node {$\widehat p_F$} (B);
\draw[->] (A) to node [swap] {$\alpha$} (C);
\draw[->] (C) to node [swap] {$\widehat p_G$} (D);
\draw[->] (B) to node {$\widehat P_n\alpha$} (D);

\end{tikzpicture}
\end{center}
is a homotopy pullback square in the $\hf$-model structure.  A  functor $F$  in $\Fun(\C,\D)$ is fibrant in the $n$-excisive model structure if and only if it is weakly equivalent in the $\hf$-model structure to an $n$-excisive functor and is fibrant in the $\hf$-model structure.
\end{prop}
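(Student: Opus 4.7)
The plan is to apply Proposition \ref{Qfibrations} with $\M = \Fun(\C,\D)_{\hf}$ and $Q = \widehat P_n$. By Theorem \ref{t:nexcisive}, the $n$-excisive model structure is precisely the Bousfield localization $\M_Q$, so both parts of Proposition \ref{Qfibrations} translate directly into statements about $n$-excisive fibrations and fibrant objects.

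The first claim is essentially immediate from Proposition \ref{Qfibrations}(1): a morphism $\alpha \colon F \to G$ is a $\widehat P_n$-fibration if and only if it is an $\hf$-fibration and the naturality square of $\widehat p$ at $\alpha$ is a homotopy pullback in the $\hf$-model structure, which is the stated characterization.

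For fibrant objects, we first verify the hypothesis of Proposition \ref{Qfibrations}(2), namely that $\widehat p_{\ast} \colon \ast \to \widehat P_n \ast$ is an $\hf$-equivalence. By Lemma \ref{hf pres terminal}, $(-)^{\hf}$ preserves the terminal object; since $T_n$ is built as a homotopy limit over $\P_0(\mathbf{n+1})$, it too preserves the terminal object, and hence so does $T_n^{\infty}$. Thus $\widehat P_n \ast$ is terminal and the hypothesis holds. Proposition \ref{Qfibrations}(2) then yields that $F$ is $n$-excisive fibrant if and only if $F$ is $\hf$-fibrant and $\widehat p_F$ is an $\hf$-equivalence. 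If $\widehat p_F$ is an $\hf$-equivalence, then $F$ is $\hf$-equivalent to $\widehat P_n F$, which is $n$-excisive by construction. Conversely, if $F \simeq_{\hf} H$ for some $n$-excisive $H$, then naturality of $\widehat p$ produces a commutative square whose vertical arrows are $\hf$-equivalences (using axiom (A1)), so by two-out-of-three $\widehat p_F$ is an $\hf$-equivalence if and only if $\widehat p_H$ is.

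The hard part will be this last substep: showing $\widehat p_H$ is an $\hf$-equivalence for every $n$-excisive $H$. My plan is to first replace $H$ by an $\hf$-fibrant model (still $\hf$-equivalent to $H$, so still $\hf$-equivalent to an $n$-excisive functor), which by Proposition \ref{fibranthf} reduces us to the case where $H$ is a homotopy functor. Then Proposition \ref{hfpreshf} gives that $\theta_H \colon H \to H^{\hf}$ is a levelwise weak equivalence, so it remains to show that $\iota_{H^{\hf}} \colon H^{\hf} \to T_n^{\infty} H^{\hf}$ is an $\hf$-equivalence. Since $H^{\hf}$ is levelwise weakly equivalent to the $n$-excisive $H$, the strongly homotopy cocartesian cube argument used to verify axiom (A2) in the proof of Theorem \ref{t:nexcisive} applies verbatim to $H^{\hf}$, showing each $T_n^k H^{\hf} \to T_n^{k+1} H^{\hf}$ is a levelwise weak equivalence, from which $\iota_{H^{\hf}}$ is a levelwise equivalence by Convention \ref{HomotopyConventions}(\ref{preseqcolims}). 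This completes the chain of reductions.
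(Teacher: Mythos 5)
Your overall skeleton matches the paper's: both arguments apply Proposition \ref{Qfibrations} with $\M=\Fun(\C,\D)_{\hf}$ and $Q=\widehat P_n$, get the fibration characterization immediately from part (1), check the condition at the terminal object, note that if $\widehat p_F$ is an $\hf$-equivalence then $F$ is $\hf$-equivalent to the $n$-excisive functor $\widehat P_nF$, and reduce the converse, via the naturality square for $\widehat p$ and a two-out-of-three argument (the paper uses Proposition \ref{properwe}), to showing that $\widehat p_H$ is an $\hf$-equivalence whenever $H$ is $n$-excisive. (A small caveat on the terminal object: with the paper's models for homotopy limits, which involve fibrant replacement, $T_n(\ast)$ is only weakly equivalent to $\ast$ rather than equal to it; but a weak equivalence is all that Proposition \ref{Qfibrations}(2) requires, so this is cosmetic.)

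The genuine gap is in your final step. You replace the $n$-excisive functor by an $\hf$-fibrant model $H'$ in order to invoke Proposition \ref{hfpreshf}; but an $\hf$-acyclic cofibration is not a levelwise weak equivalence, so $H'$ is no longer known to be $n$-excisive --- it is merely $\hf$-equivalent to one. Your key assertion that ``$H^{\hf}$ is levelwise weakly equivalent to the $n$-excisive $H$'' conflates $H'$ with the original $H$: what you actually know is that $(H')^{\hf}$ is levelwise equivalent to $H'$ (Proposition \ref{hfpreshf}) and to $H^{\hf}$ (the $\hf$-equivalence $H\rightarrow H'$), and passing from $H^{\hf}$ to $H$ levelwise would require $\theta_H$ to be a levelwise equivalence, i.e.\ that the $n$-excisive $H$ is already a homotopy functor, which is not assumed. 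So the chain of reductions does not close as written. The paper avoids the detour: for any $G$, the map $\theta_G$ is an $\hf$-equivalence simply because $(-)^{\hf}$ satisfies (A2), so no fibrant replacement is needed; and for $n$-excisive $G$, the observation made in the proof of (A2) of Theorem \ref{t:nexcisive} shows that the maps in the colimit defining $\widehat P_nG=T_n^{\infty}(G^{\hf})$ are levelwise equivalences, hence $\iota_{G^{\hf}}$ is one, and the composite $\widehat p_G=\iota_{G^{\hf}}\circ\theta_G$ is an $\hf$-equivalence; Proposition \ref{properwe} then transfers this to $F$. If you wish to keep your structure, you must either prove that $G^{\hf}$ is again $n$-excisive (at least on the cubes $U\mapsto A\ast U$) when $G$ is --- the point the paper leaves implicit when citing its proof of (A2) --- or drop the fibrant replacement and argue with (A2) as the paper does.
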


\begin{proof}
    The first statement is an immediate consequence of Proposition \ref{Qfibrations}.  For the second part, one can show that $\widehat p_\ast \colon \ast\rightarrow \widehat P_n(\ast)$, where $\ast$ denotes the terminal object in $\Fun(\C,\D)$, is a weak equivalence in the projective model structure on $\Fun(\C,\D)$ using Lemma \ref{hf pres terminal}, the facts that $T_n$ preserves weak equivalences and that $T_n(\ast)$ is weakly equivalent to $\ast$, and Convention \ref{HomotopyConventions}(\ref{preseqcolims}). Since $(-)^{\hf}$ satisfies (A1), the map $\widehat p_\ast \colon \ast\rightarrow \widehat P_n(\ast)$ is also an $\hf$-equivalence.  By Proposition \ref{QpresWE} it suffices to prove that $\widehat p_F$ is an $\hf$-equivalence if and only if $F$ is $\hf$-equivalent to an $n$-excisive functor. 

    Consider the diagram 
    \vspace{-2mm}
    \begin{center}
\begin{tikzpicture}[node distance=2.2cm, auto, scale=1]

\node (A) {$F$};
\node (B) [right of=A] {$\widehat P_nF$};
\node (C) [node distance=1.6cm, below of=A] {$G$};
\node (D) [node distance=1.6cm, below of=B] {$\widehat P_nG.$};

\draw[->] (A) to node {$\widehat p_F$} (B);
\draw[->] (A) to node [swap] {$\beta$} (C);
\draw[->] (C) to node [swap] {$\widehat p_G$} (D);
\draw[->] (B) to node {$\widehat P_n\beta$} (D);
\end{tikzpicture}
\end{center}
If $\beta$ is an $hf$-weak equivalence, then $\widehat P_n\beta$ is an $\hf$-weak equivalences since $\widehat P_n$ preserves $\hf$-weak equivalences. Moreover, as noted in the proof of (A2) for Theorem \ref{t:nexcisive}, if $G$ is $n$-excisive, the maps in the colimit defining $\widehat P_nG$ are all weak equivalences and as a result, $\iota_{G^{\hf}} \colon G^{\hf} \rightarrow \widehat P_nG$ is a levelwise weak equivalence. Since $(-)^{\hf}$ satisfies $(A2)$ and preserves levelwise weak equivalences, it follows that the composite $\widehat p_G$ is an $\hf$-equivalence.  By Proposition \ref{properwe}, $\widehat p_F$ is a weak equivalence, establishing one implication.  The converse follows immediately from the fact that $\widehat P_nF$ is $n$-excisive.
\end{proof}

\section{The degree $n$ model structure} \label{Degreenmodel}

In this section, we turn our attention to the discrete functor calculus, with the goal of showing that one can equip $\Fun(\C,\D)$ with a degree $n$ model structure via the degree $n$ approximation $\Gamma_n$.  As was the case with the homotopy functor and $n$-excisive model structures, our categories $\C$ and $\D$ must satisfy some conditions, but they are much less complicated in this case. 

\begin{convention} \label{degreenconvention}
In addition to Convention \ref{conventiondiscrete}, we assume that $\D$ is also left proper. 
\end{convention}

Recall from Definition \ref{degreen} that for a functor $F$ and an object $A$ in $\C$, 
\[ \Gamma_n F(A):= \hocofiber \left(|\perp_{n+1}^{*+1} F(A)| \rightarrow F(A)\right), \] 
where $|-|$ denotes the homotopy colimit over $\Deltaop$, sometimes referred to as the \emph{fat realization}.   We establish the existence of the degree $n$ model structure on $\Fun(\C,\D)$ by proving that $\Gamma_n$ satisfies the conditions of Theorem \ref{BousfieldQ}, for which we make use of specific models for the homotopy limits and colimits used to define $\Gamma_n$.  

For the homotopy limit, we again use the model defined in \cite[18.1.8]{hirschhorn}.  As noted in Section \ref{s:discretefc}, we can assume that $\perp_{n+1}$ is a comonad.  To guarantee that $\perp_{n+1}^k$ preserves weak equivalences, we replace $F$ with its functorial fibrant replacement $\mathbb{F}(F)$, which exists by Theorem \ref{SimpFibRep}.  That is, we set 
\[ \bot_{n+1}^{k}F:=\bot_{n+1}^{k}\mathbb{F}(F)  \]
for any functor $F$.  Then by \cite[18.5.2, 18.5.3]{hirschhorn} we know that $\bot_{n+1}^{k}$ preserves weak equivalences in $\Fun(\C,\D)$.  

For the model of $\Gamma_n$ that we use, we also require a good model for homotopy colimits, for which we use the one described by Hirschhorn \cite[18.1.2]{hirschhorn}.  To ensure that $|-|$ preserves weak equivalences, we precompose the homotopy colimit with the simplicial functorial cofibrant replacement functor guaranteed by \cite[24.2]{shulman}.  Again, \cite[18.5.3]{hirschhorn} guarantees that this functor preserves levelwise weak equivalences of diagrams.

Finally, to guarantee that the homotopy cofiber preserves weak equivalences, and that there is a natural transformation from the identity functor on $\Fun(\C,\D)$ to $\Gamma_n$, we use the following model for the homotopy cofiber.  For a map $F\rightarrow G$, we set $\hocofiber(F\rightarrow G)$ equal to the pushout of the diagram $E(\star_\D)\hookleftarrow F \rightarrow G$ where $\star_\mathcal D$ is the constant functor on the zero object in $\D$, and $F\rightarrow E(\star_\mathcal D)\rightarrow \star_\mathcal D$ is a functorial factorization of $F\rightarrow \star_\mathcal D$ as a cofibration followed by an acyclic fibration.  That this construction is homotopy invariant follows from \cite[13.5.3, 13.5.4]{hirschhorn} and the fact that $\D$ is left proper.  

When using the construction defined above, there is a natural map from   $G$ to  $\hocofiber(F\rightarrow G)$; an application of this fact to the augmentation $|\bot_{n+1}^{*+1} F| \rightarrow \mathbb{F}(F)$ yields a natural map $\mathbb F(F) \rightarrow \Gamma_n(F)$, and precomposing with the natural weak equivalence $\theta_F \colon F\rightarrow \mathbb{F}(F)$ yields a natural transformation $F \rightarrow \Gamma_nF$ that is natural in $F$.  Hence, we have a natural transformation $\gamma \colon \id_{\Fun(\C,\D)}\rightarrow \Gamma_n$. 

Recall that as part of Convention \ref{conventiondiscrete}, we are assuming $\D$ is stable.  As noted in Section \ref{s:fcbackground}, the proof that the functor $\Gamma_nF$ is a degree $n$ functor in \cite[5.4, 5.6.1]{bjm} makes use of the fact that $\D$ is stable to ensure that $cr_{n+1}$ commutes with the homotopy cofiber and colimit used to define $\Gamma_nF$. We thus obtain
\begin{align*}
cr_{n+1}\Gamma_nF&=cr_{n+1}\hocofiber\left (|\bot_{n+1}^{\ast+1}F|\rightarrow F\right)\\
&\simeq \hocofiber\left(|cr_{n+1}(\bot_{n+1}^{\ast+1}F)|\rightarrow cr_{n+1}F\right)\\
&\simeq \star_{\D}.
\end{align*}
The last equivalence is a consequence of \cite[5.5]{bjm} which uses an extra degeneracy argument to prove that $|cr_{n+1}(\bot_{n+1}^{\ast+1}F)|\rightarrow cr_{n+1}F$ is a weak equivalence.

We now state the main result of this section.

\begin{theorem} \label{degreenmodel}
Under the assumptions of Convention \ref{degreenconvention}, there exists a degree $n$ model structure on the category of functors $\Fun(\C,\D)$ with weak equivalences given by $\Gamma_n$-equivalences and cofibrations the same as in the projective model structure. 
\end{theorem}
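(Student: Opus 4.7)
The plan is to verify that $\Gamma_n$, together with the natural transformation $\gamma \colon \id_{\Fun(\C,\D)} \Rightarrow \Gamma_n$ constructed immediately before the theorem, satisfies the hypotheses of Corollary \ref{simplifiedA3} applied to the projective model structure on $\Fun(\C,\D)$. Thus it suffices to check axioms (A1), (A2), and (A3'), and Theorem \ref{BousfieldQ} then yields the degree $n$ model structure with the stated weak equivalences and cofibrations.

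For (A1), the functor $\Gamma_n$ is a composite of pieces each of which preserves weak equivalences under our choices of models. The simplicial fibrant replacement $\mathbb{F}$ built into the definition of $\bot_{n+1}^{k}$ preserves weak equivalences by Remark \ref{FibRepRemarks}; the iterated homotopy limits $\bot_{n+1}^{k}$, realized via the Hirschhorn model, preserve levelwise weak equivalences between diagrams with fibrant values by \cite[18.5.2, 18.5.3]{hirschhorn}; the fat realization $|-|$ precomposed with the simplicial cofibrant replacement of \cite[24.2]{shulman} preserves levelwise weak equivalences by the same reference; and the homotopy cofiber, in the form described before the theorem, preserves weak equivalences thanks to the left properness added in Convention \ref{degreenconvention} \cite[13.5.3, 13.5.4]{hirschhorn}.

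For (A2), the crucial input is the calculation $cr_{n+1}(\Gamma_n F)\simeq \star_{\D}$ recorded just before the statement. For \emph{any} degree $n$ functor $G$ and object $A$, one has $\bot_{n+1}G(A)\simeq cr_{n+1}G(A,\dots,A)\simeq \star_{\D}$, so $|\bot_{n+1}^{*+1}G|\simeq \star_{\D}$ and the augmentation $|\bot_{n+1}^{*+1}G|\rightarrow G$ is equivalent to $\star_{\D}\rightarrow G$, making $\gamma_G \colon G\rightarrow \Gamma_n G$ a levelwise weak equivalence. Applied with $G=\Gamma_n F$ this proves $\gamma_{\Gamma_n F}$ is a weak equivalence. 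For the companion map $\Gamma_n(\gamma_F)$, we invoke the homotopy universal property of $\gamma_F$ among natural transformations from $F$ to degree $n$ functors \cite[\S 5]{bjm}: both $\gamma_{\Gamma_n F}$ and $\Gamma_n(\gamma_F)$ are maps $\Gamma_n F\rightarrow \Gamma_n^2 F$ whose target is degree $n$, and the naturality square for $\gamma$ applied to $\gamma_F$ shows that they agree after precomposition with $\gamma_F$; since precomposition with $\gamma_F$ induces an equivalence on homotopy classes of maps into any degree $n$ functor, the two are simplicially homotopic and hence $\Gamma_n(\gamma_F)$ is also a weak equivalence.

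For (A3'), we exploit the stability assumption on $\D$, under which homotopy pullback squares and homotopy pushout squares in $\Fun(\C,\D)$ coincide. The functors $\bot_{n+1}^{k}$ are iterated homotopy limits and therefore preserve homotopy pullback squares, while the fat realization and the homotopy cofiber are homotopy colimits and therefore preserve homotopy pushout squares; under stability these are the same condition, so the composite $\Gamma_n$ preserves homotopy pullback squares, giving (A3'). The main obstacle is the second half of (A2): the verification for $\gamma_{\Gamma_n F}$ is direct from the cross-effect computation, but showing $\Gamma_n(\gamma_F)$ is a weak equivalence requires the universal-property comparison above rather than another direct cross-effect argument, since $\gamma_F$ itself need not be a weak equivalence in the projective structure.
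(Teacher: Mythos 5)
Your framework and most of the verification line up with the paper: you apply Corollary \ref{simplifiedA3} to the projective structure with $Q=\Gamma_n$, your (A1) argument is the paper's, your proof that $\gamma_G$ is a weak equivalence for any degree $n$ functor $G$ (hence that $\gamma_{\Gamma_nF}$ is one) is the paper's argument via $\bot_{n+1}G\simeq\star_\D$ and left properness, and your (A3') argument via stability is also the paper's. The genuine gap is in the other half of (A2). To show $\Gamma_n(\gamma_F)$ is a weak equivalence you invoke a ``homotopy universal property'' of $\gamma_F$ from \cite[\S 5]{bjm} in the strong form that precomposition with $\gamma_F$ induces a bijection on homotopy classes of maps into any degree $n$ functor. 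That injectivity statement is essentially equivalent to the idempotency you are trying to prove: in this kind of localization argument (compare Goodwillie's proof of \cite[1.8]{goodwillie3}), the uniqueness-up-to-homotopy half of universality is standardly \emph{deduced from} the fact that both maps $\Gamma_nF\rightarrow\Gamma_n^2F$ are equivalences, not available beforehand. The cited remark only asserts universality loosely, and the reference does not supply the precomposition-bijection in the present setting (with the specific functorial fibrant/cofibrant replacements and homotopy cofiber model used here) independently of (A2); so as written the step is circular, or at best rests on an unverified citation.

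Moreover, your closing claim that no direct argument is possible because $\gamma_F$ need not be a projective weak equivalence is not correct: the paper gives one. It suffices to show $\Gamma_n\vert\bot_{n+1}^{\ast+1}F\vert\simeq\star_\D$. Using the comonad comultiplication and the extra degeneracy argument of \cite[5.5]{bjm}, one gets weak equivalences $\vert\bot_{n+1}^{k}\bot_{n+1}^{\ast+1}F\vert\simeq\bot_{n+1}^{k}F$ for $k\geq 1$; since $\D$ is stable, $\bot_{n+1}$, being a finite homotopy limit, commutes with the fat realization, so
\[ \vert\bot_{n+1}^{\ast+1}\vert\bot_{n+1}^{\ast+1}F\vert\vert\;\simeq\;\vert\bot_{n+1}^{\ast+1}F\vert, \]
and hence the homotopy cofiber defining $\Gamma_n\vert\bot_{n+1}^{\ast+1}F\vert$ is trivial, which gives that $\Gamma_n(\gamma_F)$ is a weak equivalence. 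You should replace the universal-property appeal with this (or an equivalent) direct argument; you should also note, as the paper does via Corollary \ref{fibrantA2} and \cite[18.5.2, 18.5.3]{hirschhorn}, that one may assume $F$ fibrant and drop the fibrant replacement $\mathbb{F}$ from the notation, which is a minor but necessary piece of bookkeeping in your write-up.
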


\begin{proof}
We prove this result via an application of Corollary \ref{simplifiedA3}, setting $\M=\Fun(\C,\D)$ and $Q=\Gamma_n \colon \Fun(\C,\D) \rightarrow \Fun(\C,\D)$.  As described above,  the functor $\Gamma_n \colon \Fun(\C,\D)\rightarrow \Fun(\C,\D)$ is constructed from $F$ via homotopy limits, homotopy colimits, and homotopy cofibers that preserve weak equivalences.  Hence, axiom (A1) of Theorem \ref{BousfieldQ} is satisfied.  

To prove that axiom (A2) holds, we assume that $F$ is fibrant and  use Corollary \ref{fibrantA2}. By \cite[18.5.2, 18.5.3]{hirschhorn}, the results of applying $\bot_{n+1}^{k}$ to $F$ and $\mathbb{F}(F)$ are weakly equivalent, so we can work directly with $F$ instead of its functorial fibrant replacement. 

To prove that $\gamma_{\Gamma_nF}$ is a weak equivalence, we note that for any degree $n$ functor $G$, 
\[ \bot_{n+1}G=\Delta^\ast cr_{n+1}G\simeq \star_{\D} \]
where $\Delta^\ast$ denotes precomposition with the diagonal functor.  Hence, the simplicial object $\bot_{n+1}^{\ast+1}G$ is levelwise weakly equivalent to the constant simplicial object on $\star_{\D}$ and   $|\bot_{n+1}^{\ast+1}G|\simeq \star_{\D}$. Since $\Gamma_nG$ is the homotopy cofiber of $|\bot_{n+1}^{\ast+1}G|\rightarrow G$ and $\D$ is left proper, the dual of Proposition \ref{properwe} guarantees that  $\gamma_G$ is a weak equivalence.  Setting $G=\Gamma_nF$, we see that $\gamma_{\Gamma_nF}$ is a weak equivalence.  

To prove that $\Gamma_n\gamma_F$ is a weak equivalence, it suffices to show that $\Gamma_n|\bot_{n+1}^{\ast+1}F|\simeq \star_{\D}.$  As a comonad, $\bot_{n+1}$ comes equipped with a natural transformation, the comultiplication $\bot_{n+1}\Rightarrow \bot_{n+1}\bot_{n+1}$, which can be used 
to construct weak equivalences between $|\bot_{n+1}^k\bot_{n+1}^{\ast +1}F|$ and $\bot_{n+1}^kF$ for $k\geq 1$, as in \cite[5.5]{bjm}.  Since $\D$ is stable, $\bot_{n+1}$, as a finite homotopy limit, commutes with $|-|$, so we have 
\[ |\bot_{n+1}^{\ast+1}|\bot_{n+1}^{\ast +1}F||\simeq ||\bot_{n+1}^{\ast+1}\bot_{n+1}^{\ast+1} F||\simeq |\bot_{n+1}^{\ast +1}F|.\]
It follows that  $\Gamma_n|\bot_{n+1}^{\ast +1}F|\simeq \star_{\D}$, which implies that $\Gamma_n\gamma_F \colon \Gamma_nF\rightarrow \Gamma_n\Gamma_nF$ is a weak equivalence.

It remains to check that $\Gamma_n$ satisfies axiom (A3'), i.e., that homotopy pullback squares are preserved by $\Gamma_n$.  Suppose that
\vspace{-3mm}
\begin{center}
\begin{tikzpicture}[node distance=1.7cm, auto, scale=1]

\node (A) {$F$};
\node (B) [right of=A] {$G$};
\node (C) [node distance=1.4cm, below of=A] {$H$};
\node (D) [node distance=1.4cm, below of=B] {$K$};

\draw[->] (A) to node {} (B);
\draw[->] (A) to node {} (C);
\draw[->] (B) to node {} (D);
\draw[->] (C) to node {} (D);

\end{tikzpicture}
\end{center}
is a homotopy pullback square in $\Fun(\C,\D)$ and recall that homotopy pullback and homotopy pushout squares agree in a stable model category such as $\D.$   Since $\Gamma_n$ is constructed via homotopy limits and colimits, homotopy limits preserve homotopy pullbacks, and homotopy colimits preserve homotopy pushouts, it follows that applying $\Gamma_n$ to this diagram yields a homotopy pullback square.
\end{proof}

We conclude with the following consequence of  Proposition \ref{QpresWE}.  Its proof is similar to that of Proposition \ref{fibrantexcisive} and hence omitted.  

\begin{prop} \label{fibrantdegreen}
A morphism $\alpha \colon F\rightarrow G$ in $\Fun(\C,\D)$ is a fibration in the degree $n$ model structure if and only if it is a fibration in the projective model structure on $\Fun(\C,\D)$ and the diagram
\vspace{-3mm}
\begin{center}
\begin{tikzpicture}[node distance=2cm, auto, scale=1]

\node (A) {$F$};
\node (B) [right of=A] {$\Gamma_nF$};
\node (C) [node distance=1.6cm, below of=A] {$G$};
\node (D) [node distance=1.6cm, below of=B] {$\Gamma_nG$};

\draw[->] (A) to node {$\gamma_F$} (B);
\draw[->] (A) to node [swap] {$\alpha$} (C);
\draw[->] (C) to node [swap] {$\gamma_G$} (D);
\draw[->] (B) to node {$\Gamma_n\alpha$} (D);

\end{tikzpicture}
\end{center}
is a homotopy pullback square.  The object $F$ in $\Fun(\C,\D)$ is fibrant in the degree $n$ model structure if and only if it is degree $n$ and is fibrant in the projective model structure.
\end{prop}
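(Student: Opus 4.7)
The plan is to mirror the proof of Proposition \ref{fibrantexcisive}, applying Proposition \ref{Qfibrations} to the Bousfield endofunctor $\Gamma_n$ acting on the projective model structure on $\Fun(\C,\D)$. The first statement is an immediate consequence of Proposition \ref{Qfibrations}(1), which identifies $\Gamma_n$-fibrations as precisely those projective fibrations satisfying the $\Gamma_n$-fibration homotopy pullback condition. This step uses only that Theorem \ref{degreenmodel} has verified the axioms making $\Gamma_n$ a Bousfield endofunctor.

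For the second statement, I would invoke Proposition \ref{Qfibrations}(2), whose hypothesis requires that $\gamma_\ast \colon \ast \rightarrow \Gamma_n \ast$ be a weak equivalence. Since $\D$ is stable and hence pointed, the terminal object in $\Fun(\C,\D)$ is the constant functor at the zero object $\star_\D$ of $\D$. Every entry of the diagram whose homotopy limit defines $\bot_{n+1}\ast$ is then $\star_\D$, so $\bot_{n+1}\ast \simeq \star_\D$, whence $|\bot_{n+1}^{\ast+1}\ast| \simeq \star_\D$, and $\Gamma_n \ast \simeq \hocofiber(\star_\D \rightarrow \star_\D) \simeq \star_\D$. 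With this hypothesis established, Proposition \ref{Qfibrations}(2) reduces the proposition to showing that, given a projectively fibrant $F$, the morphism $\gamma_F$ is a weak equivalence if and only if $F$ is degree $n$.

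For the forward direction, if $F$ is degree $n$ then $\bot_{n+1}F \simeq \star_\D$, as recalled in the proof of Theorem \ref{degreenmodel}; hence the simplicial object $\bot_{n+1}^{\ast+1}F$ is levelwise weakly equivalent to the constant one at $\star_\D$, giving $|\bot_{n+1}^{\ast+1}F| \simeq \star_\D$. Left properness of $\D$ together with the dual of Proposition \ref{properwe} then identifies $\Gamma_n F$ with $F$ via $\gamma_F$. Conversely, if $\gamma_F$ is a weak equivalence then $F \simeq \Gamma_n F$, which is degree $n$ by the extra degeneracy computation of \cite[5.5]{bjm} recalled after Theorem \ref{degreenmodel}; since cross effects are iterated homotopy fibers, they are invariant under weak equivalence, so $F$ inherits the degree $n$ property. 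The most delicate point I expect is the verification that $\gamma_\ast$ is a weak equivalence: it requires knowing that the particular models of homotopy limit, fat realization, and homotopy cofiber that we have fixed all carry the relevant constant diagrams at $\star_\D$ to objects weakly equivalent to $\star_\D$, which in turn leans on \cite[18.5.2, 18.5.3]{hirschhorn} and on the fact that the fibrant replacement of Theorem \ref{SimpFibRep} preserves the zero object up to weak equivalence.
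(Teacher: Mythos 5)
Your proposal is correct and follows exactly the route the paper intends: the paper omits this proof, stating it is parallel to that of Proposition \ref{fibrantexcisive}, and your argument is precisely that adaptation---Proposition \ref{Qfibrations}(1) for the fibration characterization, a check that $\gamma_\ast$ is a weak equivalence, and then Proposition \ref{Qfibrations}(2) combined with showing $\gamma_F$ is a weak equivalence if and only if $F$ is degree $n$, using the $\bot_{n+1}F\simeq\star_\D$ argument from the proof of Theorem \ref{degreenmodel} in one direction and homotopy invariance of cross effects in the other. The details you flag (left properness with the dual of Proposition \ref{properwe}, and the behavior of the chosen models on the zero object) are exactly the points the paper's own arguments rely on, so nothing is missing.
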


\section{Cofibrant generation for Bousfield $Q$-model structures} \label{s:cofibrant}

In this section, we establish conditions under which the localizations produced by an endofunctor $Q$ on $\Fun(\C,\D)$ via Theorem \ref{BousfieldQ} are cofibrantly generated, expanding on the more specific examples of Biedermann and R\"ondigs in \cite{br}. A core element of those examples is the strategic creation of additional generating acyclic cofibrations for the model structure $\Fun(\C,\D)_Q$.  Since our goal is to generalize those examples, we make the following definition to capture the key features of the collections of maps that they use.  We assume throughout that $\C$ and $\D$ are as described in Convention \ref{convention}. 

\begin{definition} \label{testdefn} 
Let $\Fun(\C,\D)$ be equipped with a right proper model structure and let $Q$ be an endofunctor of $\Fun(\C,\D)$ satisfying the conditions of Theorem \ref{BousfieldQ}, including the existence of a natural transformation $\eta \colon \id \Rightarrow Q$.  A \emph{collection of test morphisms} for $\eta$ is a collection $T(Q)$ of morphisms in $\Fun(\C, \s)$ such that, for each fibration $F \rightarrow G$ in $\Fun(\C,\D)$, the diagram
\begin{center}
\begin{tikzpicture}[node distance=2cm, auto, scale=1]

\node (A) {$F$};
\node (B) [right of=A] {$QF$};
\node (C) [node distance=1.5cm, below of=A] {$G$};
\node (D) [node distance=1.5cm, below of=B] {$QG$};

\draw[->] (A) to node {} (B);
\draw[->] (A) to node [swap] {} (C);
\draw[->] (C) to node [swap] {} (D);
\draw[->] (B) to node {} (D);

\end{tikzpicture}
\end{center}
is a homotopy pullback in $\Fun(\C,\D)$ if and only if the diagram
\begin{center}
\begin{tikzpicture}[node distance=2cm, auto, scale=1]

\node (A) {$F^Y$};
\node (B) [right of=A] {$F^X$};
\node (C) [node distance=1.5cm, below of=A] {$G^Y$};
\node (D) [node distance=1.5cm, below of=B] {$G^X$};

\draw[->] (A) to node {} (B);
\draw[->] (A) to node [swap] {} (C);
\draw[->] (C) to node [swap] {} (D);
\draw[->] (B) to node {} (D);

\end{tikzpicture}
\end{center}
is a homotopy pullback in $\D$ for every $X\rightarrow Y$ in $T(Q)$.
\end{definition}

We sometimes omit the endofunctor $Q$ from the notation and simply refer to the collection of test morphisms as $T$. In all the examples in this paper, the functor $Y$ is a representable functor $R^A$, where $A$ is an object of $\C$.  

\begin{example} \label{hfTestMorphisms}
 Consider the $\hf$-model structure on $\Fun(\C,\D)$ from Theorem \ref{hfmodel}, where $\C$ and $\D$ additionally satisfy the conditions of Convention \ref{HomotopyConventions}.  Since it is designed to emphasize weak equivalence-preserving functors, we claim that the collection of morphisms of representable functors $\{R^B \rightarrow R^A\}$, where $A \rightarrow B$ ranges over all weak equivalences of $\C$, is a collection of test morphisms for $\eta \colon \id \Rightarrow (-)^{\hf}$.  Indeed, our definition of a collection of test morphisms is essentially a distillation of the key properties that Biedermann and R\"ondigs use in \cite{br} to show that the $\hf$-model structure is cofibrantly generated. That these morphisms satisfy Definition \ref{testdefn} was proved by Biedermann and R\"ondigs in \cite[4.15]{br} using the fact that $F^{R^A} \cong F(A)$ by the Yoneda Lemma \ref{Yoneda}. 
\end{example}

We revisit this example at the end of this section and give additional examples in Sections \ref{s:nexccof} and \ref{s:degreencof}, where we consider the $n$-excisive and degree $n$ model structures, respectively.

We can now state and prove our main result.  

\begin{theorem} \label{finalcofib} 
Suppose that $\C$ and $\D$ satisfy the conditions of Convention \ref{nexcisiveconv}, and  that $\Fun(\C,\D)$ is a cofibrantly generated right proper model structure on $\Fun(\C,\D)$ in which all fibrations are also fibrations under the projective model structure.  Let $Q \colon \Fun(\C,\D) \rightarrow \Fun(\C,\D)$ be a Bousfield endofunctor that has a collection $T(Q)$ of test morphisms for the natural transformation $\eta \colon \id \Rightarrow Q$. Then the model structure $\Fun(\C,\D)_Q$ is cofibrantly generated. 
\end{theorem}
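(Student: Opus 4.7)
The plan is to apply a standard recognition theorem for cofibrantly generated model structures by exhibiting explicit generating (acyclic) cofibrations for $\Fun(\C,\D)_Q$. Since the cofibrations of $\Fun(\C,\D)_Q$ agree with those of $\Fun(\C,\D)$, the trivial fibrations also agree, as they are determined by the right lifting property against cofibrations. So I would take the generating cofibrations $I$ of $\Fun(\C,\D)_Q$ to be the generating cofibrations of $\Fun(\C,\D)$. The substantive task is to produce a set $J'$ of generating acyclic cofibrations for the $Q$-model structure.

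I would set $J' = J \cup J_Q$, where $J$ is the given set of generating acyclic cofibrations of $\Fun(\C,\D)$ and $J_Q$ is built from the test morphisms as follows. For each test morphism $t \colon X \to Y$ in $T(Q)$, after replacing $t$ if necessary by a projective cofibration in $\Fun(\C,\s)$ via a mapping cylinder factorization in the spirit of Proposition \ref{cylinder}, and for each generating cofibration $A \to B$ of $\D$, I form the pushout--product corner map
\[ (X \otimes B) \amalg_{X \otimes A} (Y \otimes A) \longrightarrow Y \otimes B \]
in $\Fun(\C,\D)$. Let $J_Q$ be the collection of all such corner maps. An adjoint version of Lemma \ref{pullbackgen} shows that the tensor bifunctor $\Fun(\C,\s) \times \D \to \Fun(\C,\D)$ is a Quillen bifunctor, so each map in $J_Q$ is a projective cofibration in $\Fun(\C,\D)$, hence a cofibration in $\Fun(\C,\D)_Q$.

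The main verification is that $(J \cup J_Q)$-injective maps coincide with the $Q$-fibrations. By the tensor--cotensor adjunction of Proposition \ref{FunAdjunctions}, a map $F \to G$ in $\Fun(\C,\D)$ lies in $J_Q$-inj exactly when, for every $X \to Y$ in $T(Q)$, the induced map $F^Y \to F^X \times_{G^X} G^Y$ has the right lifting property against every generating cofibration of $\D$, i.e.\ is a trivial fibration in $\D$. When $F \to G$ is also in $J$-inj, hence a fibration in $\Fun(\C,\D)$, and $X \to Y$ is a cofibration in $\Fun(\C,\s)$, Lemma \ref{pullbackgen} already forces $F^Y \to F^X \times_{G^X} G^Y$ to be a fibration. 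So $J_Q$-injectivity reduces to this map being a weak equivalence, which by Definition \ref{testdefn} is equivalent to the homotopy pullback condition characterizing $Q$-fibrations in Proposition \ref{Qfibrations}. Having already observed that $I$-inj is the class of (unchanged) trivial fibrations of $\Fun(\C,\D)_Q$, one then invokes the recognition theorem to conclude.

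I expect the principal technical obstacle to be smallness: verifying that $J_Q$ permits the small object argument. The domains of the corner maps are iterated pushouts involving the tensor of $\Fun(\C,\s)$-functors with domains of generating cofibrations of $\D$, and their smallness must be deduced from the smallness hypotheses in Convention \ref{nexcisiveconv} together with the representability or finiteness properties of the functors appearing as sources and targets of the test morphisms in the examples. A secondary point is ensuring that the cylinder replacement of a test morphism does not destroy the test morphism property; since Definition \ref{testdefn} is homotopical in nature, and the cotensor $F^{(-)}$ behaves well on weak equivalences by Corollary \ref{cotensorwefib}, this should reduce to a two-out-of-three argument applied to the cotensor squares.
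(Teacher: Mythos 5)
Your proposal follows essentially the same route as the paper's proof: the same generating cofibrations $I$, the same set $J \cup J_Q$ of pushout--product corner maps built from cylinder-replaced test morphisms and generating cofibrations of $\D$, and the same chain of reductions via the adjunction of Proposition \ref{FunAdjunctions}, Lemma \ref{pullbackgen}, Definition \ref{testdefn}, and Proposition \ref{Qfibrations} identifying the $(J \cup J_Q)$-injective maps with the $Q$-fibrations. The smallness issue you flag is resolved in the paper exactly as you anticipate: a map out of a corner-map domain into a transfinite composite is adjointed back to compatible maps out of $C$ and $D$, and the finiteness hypotheses on the domains and codomains of the generating cofibrations of $\D$ in Convention \ref{nexcisiveconv} give the required factorization through a finite stage.
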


\begin{proof} 
Let $I$ and $J$ denote sets of generating cofibrations and acyclic cofibrations, respectively, for the model structure on $\Fun(\C,\D)$.  We need to identify sets of generating cofibrations and acyclic cofibrations for the model structure $\Fun(\C,\D)_Q$.   Since the cofibrations are unchanged by the $Q$-localization, we can simply use the set $I$ as a set of the generating cofibrations for $\Fun(\C,\D)_Q$. 

Assume that $F\rightarrow G$ is a fibration in $\Fun(\C,\D)$. To identify a set of generating acyclic cofibrations for $\Fun(\C,\D)_Q$, we use  the $Q$-fibration condition of Proposition \ref{Qfibrations}, namely that $F \rightarrow G$ is a $Q$-fibration if and only if it is a fibration in $\Fun(\C,\D)$, and the diagram
\vspace{-3mm}
\begin{equation}\label{5-7square}
\begin{tikzpicture}[baseline=(current bounding box.center), node distance=2cm, auto, scale=1]

\node (A) {$F$};
\node (B) [right of=A] {$QF$};
\node (C) [node distance=1.5cm, below of=A] {$G$};
\node (D) [node distance=1.5cm, below of=B] {$QG$};

\draw[->] (A) to node {$\eta_F$} (B);
\draw[->] (A) to node [swap] {} (C);
\draw[->] (C) to node [swap] {$\eta_G$} (D);
\draw[->] (B) to node {} (D);

\end{tikzpicture}
\end{equation}
is a homotopy pullback square in $\Fun(\C,\D)$.  We know that a map is a fibration in $\Fun(\C,\D)$ if and only if it has the right lifting property with respect to the set $J$.  Thus, it suffices to identify a set of maps $J_Q$ such that the above diagram is a homotopy pullback square if and only if $F \rightarrow G$ has the right lifting property with respect to the maps in $J_Q$; we can then take $J \cup J_Q$ as the set of generating acyclic cofibrations for $\Fun(\C,\D)_Q$.

By the definition of test morphism, \eqref{5-7square} is a homotopy pullback square if and only if 
\vspace{-3mm}
\begin{equation}\label{cotensorsquare}
\begin{tikzpicture}[baseline=(current bounding box.center), node distance=2cm, auto, scale=1]

\node (A) {$F^Y$};
\node (B) [right of=A] {$F^X$};
\node (C) [node distance=1.5cm, below of=A] {$G^Y$};
\node (D) [node distance=1.5cm, below of=B] {$G^X$};

\draw[->] (A) to node {} (B);
\draw[->] (A) to node [swap] {} (C);
\draw[->] (C) to node [swap] {} (D);
\draw[->] (B) to node {} (D);

\end{tikzpicture}
\end{equation}
is a homotopy pullback square for each $\alpha \colon X\rightarrow Y$ in $T$.  

However, for each object $X$ in $\Fun(\C,\s)$ we can apply Proposition \ref{cylinder} to the morphism from the initial object to $X$ to obtain a simplicial homotopy equivalence 
\begin{equation} \label{cofrep}
\widehat X \rightarrow X
\end{equation}
where $\widehat X$ is cofibrant.  Since simplicial functors preserve simplicial homotopy equivalences \cite[9.6.10]{hirschhorn}, we have that $F^X\rightarrow F^{\widehat X}$ and $G^X\rightarrow G^{\widehat X}$ are weak equivalences.  For each test morphism $\alpha \colon X\rightarrow Y$, we obtain a diagram in which the right-hand square is a homotopy pullback square by Proposition \ref{properwe}:
\vspace{-1mm}
\begin{equation}
\label{cofibrantsquare}
\begin{tikzpicture}[baseline=(current bounding box.center), node distance=2cm, auto, scale=1]

\node (A) {$F^Y$};
\node (B) [right of=A] {$F^X$};
\node (X) [right of=B] {$F^{\widehat X}$};
\node (C) [node distance=1.5cm, below of=A] {$G^Y$};
\node (D) [node distance=1.5cm, below of=B] {$G^X$};
\node (Y) [node distance=1.5cm, below of=X] {$G^{\widehat X}$.};

\draw[->] (A) to node {} (B);
\draw[->] (A) to node [swap] {} (C);
\draw[->] (C) to node [swap] {} (D);
\draw[->] (B) to node {} (D);
\draw[->] (B) to node {} (X);
\draw[->] (X) to node {} (Y);
\draw[->] (D) to node {} (Y);
\end{tikzpicture}
\end{equation}
Applying Proposition \ref{pullbackcomposite} to this diagram, we see that \eqref{cotensorsquare} is a homotopy pullback if and only if the outer square in \eqref{cofibrantsquare} is a homotopy pullback square.

Again by Proposition \ref{cylinder} and the fact that simplicial functors preserve simplicial homotopy equivalences, we know that the composite $\widehat X\rightarrow X \xrightarrow{\alpha} Y$ can be factored as a cofibration $\zeta_\alpha$ followed by a simplicial homotopy equivalence $\widehat{X} \xrightarrow{\zeta_\alpha} \Cyl(\alpha) \xrightarrow{\simeq} Y$, and we obtain weak equivalences of evaluated cotensors $F^Y\rightarrow F^{\Cyl(\alpha)}$ and $G^Y\rightarrow G^{\Cyl(\alpha)}$.  Then, by Proposition \ref{pullbackcomposite}, the outer square in \eqref{cofibrantsquare} is a homotopy pullback square if and only if
\vspace{-3mm}
\begin{equation} \label{cylindersquare}
\begin{tikzpicture}[baseline=(current bounding box.center), node distance=2.2cm, auto, scale=1]

\node (A) {$F^{\Cyl(\alpha)}$};
\node (B) [right of=A] {$F^{\widehat X}$};
\node (C) [node distance=1.5cm, below of=A] {$G^{\Cyl(\alpha)}$};
\node (D) [node distance=1.5cm, below of=B] {$G^{\widehat X}$};

\draw[->] (A) to node {} (B);
\draw[->] (A) to node [swap] {} (C);
\draw[->] (C) to node [swap] {} (D);
\draw[->] (B) to node {} (D);

\end{tikzpicture}
\end{equation}
is a homotopy pullback square.

Since $F\rightarrow G$ is a projective fibration by assumption, by Corollary \ref{cotensorwefib}  we know that $F^{\widehat X}\rightarrow G^{\widehat X}$ is a fibration, so \eqref{cylindersquare} is a homotopy pullback if and only if 
\begin{equation} \label{pullbackmap}
F^{\Cyl(\alpha)}\rightarrow F^{\widehat X}\times _{G^{\widehat X}} G^{\Cyl(\alpha)}
\end{equation}
is a weak equivalence.  Note that \eqref{pullbackmap} is a fibration by Lemma \ref{pullbackgen}.  So we can show that it is weak equivalence by showing that a lift exists in every commutative diagram of the form 
\vspace{-2mm}
\begin{equation}\label{liftsquare}
\begin{tikzpicture}[baseline=(current bounding box.center), node distance=3cm, auto, scale=1]

\node (A) {$C$};
\node (B) [right of=A] {$F^{\Cyl(\alpha)}$};
\node (C) [node distance=1.5cm, below of=A] {$D$};
\node (D) [node distance=1.5cm, below of=B] {$F^{\widehat X}\times_{G^{\widehat X}}G^{\Cyl(\alpha)}$,};

\draw[->] (A) to node {} (B);
\draw[->] (A) to node [swap] {} (C);
\draw[->] (C) to node [swap] {} (D);
\draw[->] (B) to node {} (D);
\draw[dashed, ->] (C) to node {} (B);

\end{tikzpicture}
\end{equation}
where $C\rightarrow D$ is in the set $I_\D$ of generating cofibrations for $\D$.  By the first adjunction of Proposition \ref{FunAdjunctions}, a lift exists in \eqref{liftsquare} if and only if a lift exists in 
\vspace{-2mm}
\begin{equation*} \begin{tikzpicture}[node distance=3.5cm, auto, scale=1]

\node (A) {$D\otimes \widehat X\coprod_{C\otimes \widehat X}C\otimes \Cyl(\alpha)$};
\node (B) [right of=A] {$F$};
\node (C) [node distance=1.5cm, below of=A] {$D\otimes \Cyl(\alpha)$};
\node (D) [node distance=1.5cm, below of=B] {$G$.};

\draw[->] (A) to node {} (B);
\draw[->] (A) to node [swap] {} (C);
\draw[->] (C) to node [swap] {} (D);
\draw[->] (B) to node {} (D);
\draw[dashed, ->] (C) to node {} (B);

\end{tikzpicture} \end{equation*}
\vspace{-1mm}
Hence, the left-hand vertical maps can be taken as the set $J_Q$. That is, 
\[ J_Q=\{ i\Box \zeta_{\alpha} \colon C\otimes \Cyl(\alpha)\amalg_{C\otimes \widehat X}D\otimes \widehat X\rightarrow D\otimes \Cyl(\alpha) \mid (i \colon C\rightarrow D) \in I_{\D}, \alpha\in T \}. \]

The preceding argument shows that the maps in $J_Q$ have the left lifting property with respect to all $Q$-fibrations, so that they are indeed acyclic cofibrations in the model structure induced by $Q$.  To complete the proof, it remains to show that this set of maps permits the small object argument.  We need to show that, given a transfinite composition $\colim_n F_n$ of pushouts along acyclic cofibrations, any map 
\[ \langle f, g \rangle \colon C\otimes \Cyl(\alpha)\amalg_{C\otimes \widehat X}D\otimes \widehat X \rightarrow \colim_n F_n \] 
factors through some $F_k$. By the universal property of pushouts, the data of such a map is equivalent to a commutative square 
\vspace{-2mm}
\[ \begin{tikzpicture}[node distance=3.5cm, auto, scale=1]

\node (A) {$C\otimes \widehat X$};
\node (B) [right of=A] {$D\otimes \widehat X$};
\node (C) [node distance=1.5cm, below of=A] {$C\otimes \Cyl(\alpha)$};
\node (D) [node distance=1.5cm, below of=B] {$\colim_n F_n$,};

\draw[->] (A) to node {$i \otimes 1$} (B);
\draw[->] (A) to node [swap] {$1 \otimes \zeta_{\alpha}$} (C);
\draw[->] (C) to node [swap] {$f$} (D);
\draw[->] (B) to node {$g$} (D);

\end{tikzpicture} \]
\vspace{-1mm}
and a factorization of such a map through a functor $F_k$ is equivalent to a commutative diagram  
\vspace{-2mm}
\[ \begin{tikzpicture}[node distance=4cm, auto, scale=1]

\node (A) {$C\otimes \widehat X$};
\node (B) [right of=A] {$D\otimes \widehat X$};
\node (C) [node distance=1.8cm, below of=A] {$C\otimes \Cyl(\alpha)$};
\node (D) [node distance=1.8cm, below of=B] {$\colim_n F_n$.};

\node (E) [node distance=2cm, right of=A] {};
\node (F) [node distance=9mm, below of=E] {$F_k$};

\draw[->] (A) to node {$i \otimes 1$} (B);
\draw[->] (A) to node [swap] {$1 \otimes \zeta_{\alpha}$} (C);
\draw[->] (C) to node [swap] {$f$} (D);
\draw[->] (B) to node {$g$} (D);

\draw[->] (C) to node {$f_k$} (F);
\draw[->] (B) to node {$g_k$} (F);
\draw[->] (F) to node {} (D);

\end{tikzpicture} \]
Using the first adjunction of Proposition \ref{FunAdjunctions}, this commutative diagram is in turn equivalent to
\vspace{-2mm}
\[ \begin{tikzpicture}[node distance=4.4cm, auto, scale=1]

\node (A) {$C$};
\node (B) [right of=A] {$(\colim_n F_n)^{\Cyl(\alpha)}$};
\node (C) [node distance=3.2cm, below of=A] {$D$};
\node (D) [node distance=3.2cm, below of=B] {$(\colim_n F_n)^{\widehat X}$.};

\node (E) [node distance=2.2cm, right of=A] {};
\node (F) [node distance=8mm, below of=E] {$F_k^{\Cyl(\alpha)}$};
\node (G) [node distance=2.4cm, below of=E] {$F_k^{\widehat X}$};

\draw[->] (A) to node {$\widehat{f}$} (B);
\draw[->] (A) to node [swap] {$i$} (C);
\draw[->] (C) to node [swap] {$\widehat{g}$} (D);
\draw[->] (B) to node {$1^{\zeta_{\alpha}}$} (D);

\draw[->] (C) to node {$\widehat{g}_k$} (G);
\draw[->] (G) to node {} (D);
\draw[->] (A) to node [swap] {$\widehat{f}_k$} (F);
\draw[->] (F) to node {} (B);
\draw[->] (F) to node {} (G);

\end{tikzpicture} \]
The fact that both $C$ and $D$ permit the small object argument, by Convention \ref{nexcisiveconv}, guarantees the existence of maps $\widehat{f}_k$ and $\widehat{g}_k$ such that the diagram above commutes. 
\end{proof}

As a first example of this localized cofibrant generation, the next theorem follows immediately from Theorem \ref{finalcofib} and Example \ref{hfTestMorphisms}; an alternate proof is given in \cite[4.14]{br}.

\begin{theorem} 
Assuming Convention \ref{nexcisiveconv}, the model category $\Fun(\C,\D)_{\hf}$ of Theorem \ref{hfmodel} has the structure of a cofibrantly generated model category.
\end{theorem}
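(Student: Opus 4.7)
The plan is to apply Theorem \ref{finalcofib} directly, taking the ambient model structure on $\Fun(\C,\D)$ to be the projective model structure and the Bousfield endofunctor to be $Q = (-)^{\hf}$. First I would check the setup hypotheses: by Theorem \ref{t:Funmodel}, the projective model structure on $\Fun(\C,\D)$ is cofibrantly generated, right proper, and simplicial; its fibrations are tautologically projective fibrations, so the requirement in Theorem \ref{finalcofib} that every fibration in the base structure be a projective fibration is immediate. Convention \ref{nexcisiveconv} supplies the remaining hypotheses on $\C$ and $\D$.

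Next I would verify that $(-)^{\hf}$ is a Bousfield endofunctor on the projective model structure, i.e., that it satisfies axioms (A1), (A2), and (A3') together with a natural transformation $\theta \colon \id \Rightarrow (-)^{\hf}$. This is exactly what underlies the construction of the $\hf$-model structure in Theorem \ref{hfmodel} via Corollary \ref{simplifiedA3}, so no new work is needed here; the natural transformation $\theta$ is the one from Remark \ref{varphi induces theta}.

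Finally, I would invoke Example \ref{hfTestMorphisms}: the collection $\{R^B \to R^A\}$, indexed by the weak equivalences $A \to B$ in $\C$, serves as a collection of test morphisms for $\theta$. The essential input is the Yoneda Lemma \ref{Yoneda}, which identifies $F^{R^A} \cong F(A)$ and $F^{R^B} \cong F(B)$, so that the evaluated cotensor square associated to $R^B \to R^A$ becomes the levelwise square of values at $A$ and $B$ for a map $F \to G$. The equivalence between this square being a homotopy pullback for every weak equivalence $A \to B$ in $\C$ and the $(-)^{\hf}$-fibration condition on $F \to G$ is then precisely the content of Proposition \ref{fibranthf}.

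With these three ingredients confirmed, Theorem \ref{finalcofib} produces a set of generating acyclic cofibrations for $\Fun(\C,\D)_{\hf}$ of the form $J \cup J_{\hf}$, where $J$ is the set of generating acyclic cofibrations of the projective model structure and $J_{\hf}$ is assembled from $I_{\D}$ together with cofibrant replacements of the test morphisms and their mapping cylinders. The only genuine content is the test-morphism verification via Yoneda and Proposition \ref{fibranthf}; everything else is formal bookkeeping that has been packaged into Theorem \ref{finalcofib} precisely so that examples like this reduce to checking (A1), (A2), (A3'), and the test-morphism condition.
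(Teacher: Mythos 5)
Your proposal is correct and matches the paper's own argument: the theorem is deduced exactly as you describe, by applying Theorem \ref{finalcofib} to the projective model structure with $Q=(-)^{\hf}$ and the test morphisms $\{R^B\to R^A\}$ of Example \ref{hfTestMorphisms}, whose verification (via the Yoneda Lemma \ref{Yoneda} and the $\hf$-fibration characterization of Proposition \ref{fibranthf}, equivalently \cite[4.15]{br}) is the only substantive input. The additional bookkeeping you spell out (the base structure's fibrations being projective, $(-)^{\hf}$ being a Bousfield endofunctor) is exactly what the paper treats as immediate.
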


When $\Fun(\C,\D)$ has the projective model structure,  Theorem \ref{finalcofib} implies that for any Bousfield endofunctor $Q$ that has a collection of test morphisms,  the model structure $\Fun(\C,\D)_Q$ of Theorem \ref{BousfieldQ} is cofibrantly generated.  Moreover, since the fibrations of $\Fun(\C,\D)_Q$ must also be projective fibrations by Proposition \ref{Qfibrations}, we can use Theorem \ref{finalcofib} to conclude that the localization of $\Fun(\C,\D)$ obtained by applying a sequence of Bousfield endofunctors satisfying the appropriate test morphism conditions is cofibrantly generated.  We provide an example of this in the next section by building an $n$-excisive model structure on $\Fun(\C,\D)$ from the $\hf$-model structure.  To do so, we make use of the next result.  

\begin{prop} \label{Pfibration}
Consider the category $\Fun(\C,\D)$ with the projective model structure, as well as a localized model structure $\Fun(\C,\D)_P$ induced by a Bousfield endofunctor $P$ of $\Fun(\C,\D)$.  Suppose that $Q$ is an endofunctor of $\Fun(\C,\D)_P$ that preserves $P$-fibrations.  Then for any $P$-fibration $F \rightarrow G$, the diagram
\begin{equation} \label{xqxsquare}
\begin{tikzpicture}[baseline=(current bounding box.center), node distance=1.8cm, auto, scale=1]

\node (A) {$F$};
\node (B) [right of=A] {$QF$};
\node (C) [node distance=1.4cm, below of=A] {$G$};
\node (D) [node distance=1.4cm, below of=B] {$QG$};

\draw[->] (A) to node {} (B);
\draw[->] (A) to node [swap] {} (C);
\draw[->] (C) to node [swap] {} (D);
\draw[->] (B) to node {} (D);

\end{tikzpicture}
\end{equation}
is a homotopy pullback square in the $P$-model structure if and only if it is a levelwise homotopy pullback.
\end{prop}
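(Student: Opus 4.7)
The plan is to introduce the image of \eqref{xqxsquare} under $P$, which we call the $PQ$-square, and to show that both conditions in the proposition are equivalent to the $PQ$-square being a projective homotopy pullback. The main inputs are the $P$-fibration condition (Proposition \ref{Qfibrations}(1)), the pasting lemma for homotopy pullbacks (Proposition \ref{pullbackcomposite}), and axiom (A3') for Bousfield endofunctors.

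The preliminary observation is that $F \to G$ is a $P$-fibration by hypothesis, and $QF \to QG$ is a $P$-fibration because $Q$ preserves them; hence both are projective fibrations. The $P$-fibration condition then gives that the two naturality squares for $\eta \colon \id \to P$ on these maps (the $P$-squares for $F \to G$ and for $QF \to QG$) are projective homotopy pullbacks.

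I would then show that \eqref{xqxsquare} is a projective homotopy pullback if and only if the $PQ$-square is. One direction is immediate from axiom (A3'). For the converse, I form two horizontal stackings that share the same outer rectangle with corners $F, PQF, G, PQG$: the first stacks \eqref{xqxsquare} on the left with the $P$-square for $QF \to QG$ on the right; the second stacks the $P$-square for $F \to G$ on the left with the $PQ$-square on the right. By naturality of $\eta$ the two outer rectangles coincide. Assuming the $PQ$-square is a projective homotopy pullback, Proposition \ref{pullbackcomposite} applied to the second stacking (with the $PQ$-square as its hocart right square) shows the outer rectangle is a projective homotopy pullback, and then applying Proposition \ref{pullbackcomposite} to the first stacking (with the $P$-square for $QF \to QG$ as its hocart right square) shows \eqref{xqxsquare} is a projective homotopy pullback.

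Finally, I would show that \eqref{xqxsquare} is a $P$-homotopy pullback if and only if the $PQ$-square is a projective homotopy pullback. Since $QF \to QG$ is both a projective and a $P$-fibration, the strict pullback $W := G \times_{QG} QF$ models both the projective and the $P$-homotopy pullback of $G \to QG \leftarrow QF$, so \eqref{xqxsquare} is a $P$-homotopy pullback if and only if $PF \to PW$ is a projective weak equivalence. Axiom (A3') applied to the strict pullback square (itself a projective homotopy pullback because $QF \to QG$ is a projective fibration) identifies $PW$ up to projective weak equivalence with the projective homotopy pullback of $PG \to PQG \leftarrow PQF$, and under this identification $PF \to PW$ corresponds to the canonical comparison map; so the condition is exactly that the $PQ$-square is a projective homotopy pullback. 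Chaining the two equivalences yields the proposition. The main technical obstacle is this last step, where one must justify identifying $PW$ with the projective homotopy pullback of $PG \to PQG \leftarrow PQF$ and confirm that the induced map $PF \to PW$ matches the canonical comparison map.
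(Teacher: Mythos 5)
Your proposal is correct and takes essentially the same route as the paper: the substantive step in both is to apply (A3') to the strict pullback $G \times_{QG} QF$, which models the homotopy pullback in both the projective and $P$-model structures because $QF \to QG$ is a fibration in both, and then to paste with the two naturality squares for $\eta \colon \id \Rightarrow P$, which are levelwise homotopy pullbacks by the $P$-fibration condition of Proposition \ref{Qfibrations}. The only organizational difference is that the paper disposes of the direction ``levelwise homotopy pullback implies $P$-homotopy pullback'' in one line via axiom (A1) (the comparison map $F \to QF \times_{QG} G$ is a levelwise weak equivalence, hence a $P$-equivalence) instead of routing it through the square of $P$-images, and the comparison-map compatibility you flag at the end is exactly the point the paper leaves implicit.
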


\begin{proof} 
By assumption, $QF \rightarrow QG$ is a $P$-fibration, and hence, a projective fibration. So the pullback of $G \rightarrow QG \leftarrow QF$ is a homotopy pullback in both the projective and $P$-model structures.

If \eqref{xqxsquare} is a levelwise homotopy pullback square, then the map \(F \rightarrow QF \times_{QG} G \) 
is a levelwise weak equivalence.  Since $P$ satisfies axiom (A1) of Theorem \ref{BousfieldQ}, this map is also a $P$-equivalence, and  \eqref{xqxsquare} is a homotopy pullback square in $\Fun(\C,\D)_P$.

Conversely, suppose that the diagram \eqref{xqxsquare} is a homotopy pullback square in the $P$-model structure, so that the map  
\(PF \rightarrow P(QF \times_{QG} G )\) 
is a levelwise weak equivalence.  Since $P$ satisfies axiom (A3') of Corollary \ref{simplifiedA3}, the map $P(QF \times_{QG} G) \to PQF \times^h_{PQG} PG$
is a levelwise weak equivalence, and the right-hand square in the diagram
\begin{equation} \label{Phmtpypullback} \begin{tikzpicture}[baseline=(current bounding box.center), node distance=2cm, auto, scale=1]

\node (A) {$F$};
\node (B) [right of=A] {$PF$};
\node (X) [right of=B] {$PQF$};
\node (C) [node distance=1.4cm, below of=A] {$G$};
\node (D) [node distance=1.4cm, below of=B] {$PG$};
\node (Y) [node distance=1.4cm, below of=X] {$PQG$,};

\draw[->] (A) to node {} (B);
\draw[->] (A) to node [swap] {} (C);
\draw[->] (C) to node [swap] {} (D);
\draw[->] (B) to node {} (D);
\draw[->] (B) to node {} (X);
\draw[->] (X) to node {} (Y);
\draw[->] (D) to node {} (Y);

\end{tikzpicture}
\end{equation}
is a homotopy pullback square in $\Fun(\C,\D)$.  By Proposition  \ref{Qfibrations}, the left-hand square is a levelwise homotopy pullback as well, from which we can conclude by Proposition \ref{pullbackcomposite} that the outer square is also a levelwise homotopy pullback square.

However, this outer square can  be obtained similarly as a composite diagram with middle vertical map the $P$-fibration $QF \rightarrow QG$, so that the left-hand square is the square \eqref{xqxsquare}.  We can conclude that this square is a levelwise homotopy pullback square via another application of Proposition \ref{Qfibrations} and Proposition \ref{pullbackcomposite}, completing the proof.
\end{proof}

\section{Cofibrant generation and the $n$-excisive model structure} \label{s:nexccof}

In this section, we apply Theorem \ref{finalcofib} and Proposition \ref{Pfibration} to show that the $n$-excisive model structure of Theorem \ref{t:nexcisive} is cofibrantly generated.  As in Section \ref{n-excisive model}, we assume Convention \ref{nexcisiveconv}. 

We first define a candidate set of test morphisms for the $n$-excisive model structure.

\begin{definition} \label{defnexctest}
For an object $A$ in $\C$, let $\tau_A$ be the morphism 
\[ \tau_A \colon \underset{U\subseteq \P_0(\mathbf{n+1})}{\hocolim}R^{A\ast U}\longrightarrow R^{A} \]
in $\Fun(\C,\s)$ induced by the inclusions $\varnothing\hookrightarrow U$, where $\ast$ denotes the fiberwise join as in Definition \ref{Pndefn}.  We denote by $T(\widehat P_n)$ the collection $\{\tau_A\}$ of these morphisms as $A$ ranges over all objects of $\C$.
\end{definition}

\begin{prop} \label{n-exctest} 
For the hf-model structure $\Fun(\C,\D)_{\hf}$, the collection $T(\widehat P_n)$ is a collection of test morphisms for the natural transformation $\eta \colon \id \Rightarrow P_n$.
\end{prop}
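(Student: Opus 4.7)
The plan is to verify the iff condition of Definition \ref{testdefn} by combining an identification of the evaluated cotensor with $T_n$ and repeated applications of Proposition \ref{pullbackcomposite}.  By the Yoneda Lemma \ref{Yoneda}, $F^{R^A} \cong FA$, and Proposition \ref{evcotensorholim} together with another application of Yoneda gives $F^{\hocolim_U R^{A*U}} \cong \holim_U F(A*U) = T_nF(A)$, with the map $F^{\tau_A}$ identified with the canonical $t_n\colon FA\to T_nF(A)$.  The test morphism condition for $\{\tau_A\}$ thus translates into: for every object $A$ of $\C$, the square $FA\to T_nF(A)$ over $GA\to T_nG(A)$ is a homotopy pullback in $\D$.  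Since $\widehat P_n$ preserves $\hf$-fibrations by Proposition \ref{Pnpreshf}, Proposition \ref{Pfibration} (with $P=(-)^{\hf}$ and $Q=\widehat P_n$) shows that the square $F\to \widehat P_n F$ over $G\to \widehat P_n G$ is a homotopy pullback in $\Fun(\C,\D)_{\hf}$ if and only if it is a levelwise homotopy pullback in $\D$.  The task reduces to showing the equivalence of these two levelwise pullback conditions.

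A key auxiliary fact I will establish first is that, for any $\hf$-fibration $F\to G$, the square $F\to F^{\hf}$ over $G\to G^{\hf}$ is a levelwise homotopy pullback.  Writing $\mathbb F(A)=\colim_\beta C_\beta$ as in Convention \ref{HomotopyConventions}\eqref{seqcolimits}, each $A\to C_\beta$ is a weak equivalence; Proposition \ref{fibranthf} then makes each square $FA\to FC_\beta$ over $GA\to GC_\beta$ a homotopy pullback, and sequential colimits preserve homotopy pullbacks by Convention \ref{HomotopyConventions}\eqref{preseqcolims}.

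For the $(\Leftarrow)$ direction I iterate the assumption: since $T_n$ is a finite homotopy limit and so preserves homotopy pullbacks, Proposition \ref{pullbackcomposite} and induction yield that $F\to T_n^k F$ over $G\to T_n^k G$ is a homotopy pullback for every $k$, and taking sequential colimits gives the same for $F\to T_n^\infty F$ over $G\to T_n^\infty G$.  Applying $T_n^\infty$ to the auxiliary homotopy pullback $F\to F^{\hf}$ over $G\to G^{\hf}$ produces a homotopy pullback $T_n^\infty F\to \widehat P_n F$ over $T_n^\infty G\to \widehat P_n G$, and composing the two via Proposition \ref{pullbackcomposite} gives the desired homotopy pullback $F\to \widehat P_n F$ over $G\to \widehat P_n G$.

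For the $(\Rightarrow)$ direction, assume the levelwise homotopy pullback for $F\to \widehat P_n F$ over $G\to \widehat P_n G$.  Since $\widehat P_nF$ and $\widehat P_nG$ are $n$-excisive, both horizontal $t_n$ maps in the square $\widehat P_nF\to T_n\widehat P_nF$ over $\widehat P_nG\to T_n\widehat P_nG$ are weak equivalences, so this square is a homotopy pullback; horizontally composing with the assumed square yields a homotopy pullback $F\to T_n\widehat P_nF$ over $G\to T_n\widehat P_nG$.  Applying $T_n$ to the assumed square produces a second homotopy pullback $T_nF\to T_n\widehat P_nF$ over $T_nG\to T_n\widehat P_nG$, and by the naturality of $t_n$ applied to $F\to \widehat P_nF$, horizontally precomposing this latter square with the target square $F\to T_nF$ over $G\to T_nG$ yields the same composite square as above.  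Proposition \ref{pullbackcomposite} then produces the target square as a homotopy pullback.  The main obstacle is this $(\Rightarrow)$ direction: extracting information about the single step $F\to T_nF$ from the infinite colimit $F\to \widehat P_nF$ requires the $n$-excisiveness of $\widehat P_nF$ to compare $T_n$ applied to the assumption with the desired square.
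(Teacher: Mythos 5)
Your proposal is correct and follows essentially the same route as the paper: the same reduction via Propositions \ref{Pnpreshf} and \ref{Pfibration}, the identification of the evaluated cotensor square with the levelwise $T_n$ square via Proposition \ref{evcotensorholim} and Lemma \ref{Yoneda}, and the same composition-of-squares (cube) argument for the forward direction using the $n$-excisiveness of $\widehat P_nF$, the fact that $T_n$ preserves homotopy pullbacks, and Proposition \ref{pullbackcomposite}. Your converse direction is only a reorganization of the paper's: the paper inducts directly on the squares $F\to T_n^kF^{\hf}$ with the $k=0$ case supplied at once by Proposition \ref{Qfibrations} (which also yields your auxiliary $F\to F^{\hf}$ pullback more directly than your route through Proposition \ref{fibranthf} and Convention \ref{HomotopyConventions}), whereas you treat $T_n^\infty$ and the $\hf$-replacement separately before composing; both versions rest on the same ingredients, namely preservation of homotopy pullbacks by $T_n$ and by sequential colimits, naturality of $t_n$ and $\iota$, and Proposition \ref{pullbackcomposite}.
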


\begin{proof}  
Let $F\rightarrow G$ be an $\hf$-fibration.  By Proposition \ref{Pnpreshf}, we know that $\widehat P_n$ preserves $\hf$-fibrations.  Hence, by Proposition \ref{Pfibration}, it suffices to show that
\vspace{-2mm}
\begin{equation} \label{Pnpullback}
\begin{tikzpicture}[baseline=(current bounding box.center), node distance=2cm, auto, scale=1]

\node (A) {$F$};
\node (B) [right of=A] {$\widehat{P}_nF$};
\node (C) [node distance=1.4cm, below of=A] {$G$};
\node (D) [node distance=1.4cm, below of=B] {$\widehat{P}_nG$};

\draw[->] (A) to node {} (B);
\draw[->] (A) to node [swap] {} (C);
\draw[->] (C) to node [swap] {} (D);
\draw[->] (B) to node {} (D);

\end{tikzpicture}
\end{equation} 
is a levelwise homotopy pullback square if and only if
\vspace{-2mm}
\begin{equation} \label{cotensorpullback}
\begin{tikzpicture}[baseline=(current bounding box.center), node distance=3.5cm, auto, scale=1]

\node (A) {$F^{R^A}$};
\node (B) [right of=A] {$F^{\hocolim_{U\in \P_0(\mathbf{n+1})}R^{A\ast U}}$};
\node (C) [node distance=1.4cm, below of=A] {$G^{R^A}$};
\node (D) [node distance=1.4cm, below of=B] {$G^{\hocolim_{U\in \P_0(\mathbf{n+1})}R^{A\ast U}}$};

\draw[->] (A) to node {} (B);
\draw[->] (A) to node [swap] {} (C);
\draw[->] (C) to node [swap] {} (D);
\draw[->] (B) to node {} (D);

\end{tikzpicture}
\end{equation}
is a homotopy pullback square in $\D$ for all objects $A$ in $\C$.  Combining the isomorphism of Proposition \ref{evcotensorholim}, Lemma \ref{Yoneda}, and the definition of $T_n$ (Definition \ref{Pndefn}), we see that \eqref{cotensorpullback} is a homotopy pullback square for all objects $A$ in $\C$ if and only if 
\vspace{-2mm}
\begin{equation}\label{Tnpullback}
\begin{tikzpicture}[baseline=(current bounding box.center), node distance=2cm, auto, scale=1]

\node (A) {$F$};
\node (B) [right of=A] {$T_nF$};
\node (C) [node distance=1.4cm, below of=A] {$G$};
\node (D) [node distance=1.4cm, below of=B] {$T_nG$};

\draw[->] (A) to node {} (B);
\draw[->] (A) to node [swap] {} (C);
\draw[->] (C) to node [swap] {} (D);
\draw[->] (B) to node {} (D);

\end{tikzpicture}
\end{equation}
is a levelwise homotopy pullback square.  So it suffices to show that \eqref{Pnpullback} is a levelwise homotopy pullback if and only if \eqref{Tnpullback} is.

Suppose \eqref{Pnpullback} is a levelwise homotopy pullback and consider the commutative cube 
\vspace{-2mm}
\begin{equation}\label{TnPncube}
\begin{tikzpicture}[baseline=(current bounding box.center), node distance=2.5cm, auto, scale=1]

\node (A) {$T_nF$};
\node (B) [right of=A] {$T_n\widehat{P}_nF$};
\node (C) [node distance=1.8cm, below of=A] {$T_nG$};
\node (D) [node distance=1.8cm, below of=B] {$T_n\widehat{P}_nG$.};

\node (X) [node distance=9mm, above of=A] {};

\node (E) [node distance=1.5cm, left of=X] {$F$};
\node (F) [right of=E] {$\widehat{P}_nF$};
\node (G) [node distance=1.8cm,below of=E] {$G$};
\node (H) [node distance=1.8cm,below of=F] {$\widehat{P}_nG$};

\draw[->] (A) to node {} (B);
\draw[->] (A) to node {} (C);
\draw[->] (C) to node {} (D);
\draw[->] (B) to node {} (D);

\draw[->] (E) to node {} (A);
\draw[->] (F) to node {} (B);
\draw[->] (E) to node {} (F);
\draw[->, dashed] (F) to node {} (H);
\draw[->, dashed] (H) to node {} (D);

\draw[->, dashed] (G) to node {} (H);
\draw[->] (G) to node {} (C);
\draw[->] (E) to node {} (G);

\end{tikzpicture}
\end{equation}
The front and back faces are homotopy pullbacks by assumption and the fact that $T_n$ preserves homotopy pullbacks, respectively.  If we consider its back and right faces, 
we see that the right face is a homotopy pullback because $\widehat{P_n}F$ is $n$-excisive, and hence the composite of the back and right faces is a homotopy pullback by Proposition \ref{pullbackcomposite}.  As a result, we can conclude that the composite of the left and front faces is a homotopy pullback. Since  the front face is a homotopy pullback, we can apply Proposition \ref{pullbackcomposite} again to see that the left face, which is precisely \eqref{Tnpullback}, is as well.

Conversely, suppose that \eqref{Tnpullback} is a homotopy pullback square.  Consider the commutative diagram that defines $\widehat P_n$ and the natural transformations in \eqref{Pnpullback}:
\vspace{-2mm}
\begin{equation}\label{colimit}
\begin{tikzpicture}[baseline=(current bounding box.center), node distance=2cm, auto, scale=1]

\node (A) {$F$};
\node (B) [right of=A] {$F^{\hf}$};
\node (X) [right of=B] {$T_nF^{\hf}$};
\node (U) [right of=X] {$\hdots$};
\node (N) [node distance=3.3cm, right of=U] {$\colim_{k} T_n^kF^{\hf} = \widehat{P}_nF$};

\node (C) [node distance=1.5cm, below of=A] {$G$};
\node (D) [node distance=1.5cm, below of=B] {$G^{\hf}$};
\node (Y) [node distance=1.5cm, below of=X] {$T_nG^{\hf}$};
\node (V) [node distance=1.5cm, below of=U] {$\hdots$};
\node (M) [node distance=1.5cm, below of=N] {$\colim_{k} T_n^kG^{\hf} = \widehat{P}_nG$.};

\draw[->] (A) to node {} (B);
\draw[->] (B) to node {} (X);
\draw[->] (X) to node {} (U);
\draw[->] (U) to node {} (N);

\draw[->] (A) to node [swap] {} (C);
\draw[->] (C) to node [swap] {} (D);
\draw[->] (X) to node [swap] {} (Y);
\draw[dashed,->] (N) to node [swap] {} (M);

\draw[->] (B) to node {} (D);
\draw[->] (D) to node {} (Y);
\draw[->] (Y) to node {} (V);
\draw[->] (V) to node {} (M);
\end{tikzpicture}
\end{equation}
It suffices to show that for each $k\geq 0$, the square 
\vspace{-1mm}
\begin{equation}\label{hfpullback}
\begin{tikzpicture}[baseline=(current bounding box.center), node distance=2.2cm, auto, scale=1]

\node (A) {$F$};
\node (B) [right of=A] {$T^k_nF^{\hf}$};
\node (C) [node distance=1.4cm, below of=A] {$G$};
\node (D) [node distance=1.4cm, below of=B] {$T^k_nG^{\hf}$,};

\draw[->] (A) to node {} (B);
\draw[->] (A) to node [swap] {} (C);
\draw[->] (C) to node [swap] {} (D);
\draw[->] (B) to node {} (D);

\end{tikzpicture}
\end{equation}
whose horizontal maps are given by composites of horizontal maps in \eqref{colimit}, is a homotopy pullback square.  In the case that $k=0$, this  follows from Proposition \ref{Qfibrations}.  

Assuming \eqref{hfpullback} is a homotopy pullback for some $k\geq 0$, we see that the right-hand square of the commutative diagram
\vspace{-3mm}
\begin{equation}\label{k+1square}
\begin{tikzpicture}[node distance=2.2cm, auto, scale=1]

\node (A) {$F$};
\node (B) [right of=A] {$T_nF$};
\node (X) [right of=B] {$T^{k+1}_nF^{\hf}$};
\node (C) [node distance=1.4cm, below of=A] {$G$};
\node (D) [node distance=1.4cm, below of=B] {$T_nG$};
\node (Y) [node distance=1.4cm, below of=X] {$T_n^{k+1}G^{\hf}$};

\draw[->] (A) to node {} (B);
\draw[->] (A) to node [swap] {} (C);
\draw[->] (C) to node [swap] {} (D);
\draw[->] (B) to node {} (D);
\draw[->] (B) to node {} (X);
\draw[->] (X) to node {} (Y);
\draw[->] (D) to node {} (Y);

\end{tikzpicture}
\end{equation}
is a homotopy pullback since it is obtained by applying the functor $T_n$, which preserves homotopy pullbacks, to \eqref{hfpullback}.  The outer square is then a homotopy pullback square by Proposition \ref{pullbackcomposite} since the left-hand square is \eqref{Tnpullback}.  The commutative cube 
\vspace{-2mm}
\begin{equation}\label{cube}
\begin{tikzpicture}[baseline=(current bounding box.center), node distance=2.5cm, auto, scale=1]

\node (A) {$T_nF$};
\node (B) [right of=A] {$T_n^{k+1}F^{\hf}$};
\node (C) [node distance=1.8cm, below of=A] {$T_nG$};
\node (D) [node distance=1.8cm, below of=B] {$T_n^{k+1}G^{\hf}$};

\node (X) [node distance=9mm, above of=A] {};

\node (E) [node distance=1.5cm, left of=X] {$F$};
\node (F) [right of=E] {$F^{\hf}$};
\node (G) [node distance=1.8cm,below of=E] {$G$};
\node (H) [node distance=1.8cm,below of=F] {$G^{\hf}$};

\draw[->] (A) to node {} (B);
\draw[->] (A) to node {} (C);
\draw[->] (C) to node {} (D);
\draw[->] (B) to node {} (D);

\draw[->] (E) to node {} (A);
\draw[->] (F) to node {} (B);
\draw[->] (E) to node {} (F);
\draw[->, dashed] (F) to node {} (H);
\draw[->, dashed] (H) to node {} (D);

\draw[->, dashed] (G) to node {} (H);
\draw[->] (G) to node {} (C);
\draw[->] (E) to node {} (G);

\end{tikzpicture}
\end{equation}
shows us that the outer square in \eqref{k+1square} is the same square as  \eqref{hfpullback} when $k$ is replaced by $k+1$, completing the proof by induction.
\end{proof}

We can now conclude the  main result of this section from Theorem \ref{finalcofib}.

\begin{theorem}\label{PnCof}
    The $n$-excisive model structure on $\Fun(\C,\D)$ from Theorem \ref{t:nexcisive} is cofibrantly generated.
\end{theorem}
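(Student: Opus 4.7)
The plan is to obtain Theorem \ref{PnCof} as a direct application of the general cofibrant generation criterion in Theorem \ref{finalcofib}, with the ambient right proper cofibrantly generated model structure taken to be $\Fun(\C,\D)_{\hf}$ (rather than the projective model structure) and with $Q = \widehat P_n$. Since essentially all the ingredients have already been assembled earlier in the paper, the ``proof'' is a verification that the hypotheses of Theorem \ref{finalcofib} hold in this setting.

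First, I would check that $\Fun(\C,\D)_{\hf}$ meets the hypotheses of Theorem \ref{finalcofib}. It is a right proper model structure by Theorem \ref{hfmodel}, and we just proved in the previous unnumbered theorem that it is cofibrantly generated. Moreover, by Proposition \ref{fibranthf}, every $\hf$-fibration is in particular a projective fibration, which is precisely the condition required by Theorem \ref{finalcofib}. So $\Fun(\C,\D)_{\hf}$ is a legitimate ambient model structure on which to apply the theorem.

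Next, I would verify that $\widehat P_n$ is a Bousfield endofunctor in the sense of Definition \ref{Bousenddef} with respect to $\Fun(\C,\D)_{\hf}$. This is exactly the content of the proof of Theorem \ref{t:nexcisive}: axioms (A1), (A2), and (A3') for $\widehat P_n$ acting on $\Fun(\C,\D)_{\hf}$ were verified there. Finally, Proposition \ref{n-exctest} establishes that the collection $T(\widehat P_n) = \{\tau_A\}_{A \in \C}$ defined in Definition \ref{defnexctest} is a collection of test morphisms for the natural transformation $\widehat p_n \colon \id \Rightarrow \widehat P_n$. With all four hypotheses of Theorem \ref{finalcofib} verified, we conclude that $(\Fun(\C,\D)_{\hf})_{\widehat P_n}$ is cofibrantly generated. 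Since this localized model structure is, by construction, the $n$-excisive model structure of Theorem \ref{t:nexcisive}, the result follows.

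There is essentially no obstacle here: all the work has been done in earlier sections, and the proof is a matter of citing the appropriate results. If anything, the only subtlety worth flagging explicitly in writing up the argument is making sure the reader sees why one applies Theorem \ref{finalcofib} relative to $\Fun(\C,\D)_{\hf}$ rather than the projective model structure, since the $\widehat p_n$ natural transformation was only shown to satisfy the Bousfield axioms after $(-)^{\hf}$ was inserted into its definition. A one-line remark pointing to Proposition \ref{fibranthf} (to justify that $\hf$-fibrations are projective fibrations, so that Corollary \ref{cotensorwefib} and Lemma \ref{pullbackgen} still apply as used inside the proof of Theorem \ref{finalcofib}) should suffice.
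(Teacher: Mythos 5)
Your proposal is correct and follows essentially the same route as the paper: apply Theorem \ref{finalcofib} with ambient model structure $\Fun(\C,\D)_{\hf}$ (cofibrantly generated, right proper, with fibrations that are projective fibrations), $Q = \widehat P_n$ a Bousfield endofunctor by the proof of Theorem \ref{t:nexcisive}, and test morphisms supplied by Proposition \ref{n-exctest}. The only cosmetic quibble is that the fact that $\hf$-fibrations are projective fibrations is most directly Proposition \ref{Qfibrations} (Proposition \ref{fibranthf} already presupposes a projective fibration), but this does not affect the argument.
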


\begin{remark}
Theorem \ref{PnCof} is also proved as part of \cite[5.8]{br}.  Our proof is essentially a reorganization and generalization of theirs via Theorem \ref{finalcofib}, but our approaches differ in the stage at which the representable functors of Definition \ref{defnexctest} are introduced.  In \cite{br}, the authors incorporate them into their definition of $P_n$ by replacing $T_nF$ with an evaluated cotensor
\[ T_n^{R}F(A):=F^{A_n} \]
where
\vspace{-2mm}
\begin{center}
\begin{tikzpicture}[node distance=3.8cm, auto, scale=1]

\node (A) {$A_n$};
\node (B) [right of=A] {$\hocolim_{U\in \P_0(\mathbf{n+1}) }R^{A\ast U}$};

\draw[->] (A) to node {$\simeq$} (B);

\end{tikzpicture}
\end{center}
is a cofibrant replacement for the homotopy colimit of the representable functors $R^{A\ast U}$.  We proved in \cite[7.4]{proc} that $T_n$ and $T_n^R$ agree up to weak equivalence.  We have chosen to define $\widehat P_n$ without using an evaluated cotensor to highlight the fact that this approach is not needed to establish the existence of the $n$-excisive model structure.  It does play a significant role in establishing cofibrant generation, since, in the proof of Theorem \ref{finalcofib}, being able to replace \eqref{5-7square} with the evaluated cotensor square \eqref{cotensorsquare} provides the means by which we can identify a set of generating acyclic cofibrations, but now the specific evaluated cotensor approach to $\widehat P_n$ only appears concretely in our verification of our set of test morphisms in Proposition \ref{n-exctest}.  
\end{remark}

\section{Cofibrant generation and discrete functor calculus}\label{s:degreencof}

We now revisit the degree $n$ model structure of Section \ref{Degreenmodel} and use Theorem \ref{finalcofib} to show that it is cofibrantly generated when $\D$ is.  As in Section \ref{Degreenmodel} we assume Convention \ref{degreenconvention}.

Recall from Definition \ref{degreen} and Section \ref{Degreenmodel} that the functor $\Gamma_nF$ is defined in terms of a comonad $\bot_{n+1}$ that acts on the category $\Fun(\C,\D).$ More explicitly, it is the homotopy cofiber given by  
\[ \Gamma_n F:=\hocofiber(\vert \bot_{n+1}^{\ast+1}\mathbb{F}(F)\vert \rightarrow \mathbb{F}(F)) \]
where $\vert \bot_{n+1}^{\ast+1}F\vert$ is the fat realization of the standard simplicial object associated to the comonad $\bot_{n+1}$ acting on $F$ and $\mathbb{F}(F)$ is a functorial fibrant replacement of $F$. 

\begin{theorem} \label{degncofgen}
The degree $n$ model structure on $\Fun(\C,\D)$ from Theorem \ref{degreenmodel} is cofibrantly generated.  
\end{theorem}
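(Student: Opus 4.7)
The plan is to apply Theorem \ref{finalcofib} to $\M = \Fun(\C,\D)$ equipped with the projective model structure and $Q = \Gamma_n$. Theorem \ref{degreenmodel} already establishes that $\Gamma_n$ is a Bousfield endofunctor, and the projective model structure is cofibrantly generated and right proper (Theorem \ref{t:Funmodel}); under the implicit additional assumption that $\D$ satisfies the finiteness condition of Convention \ref{nexcisiveconv}, the only remaining task is to exhibit a collection of test morphisms for $\Gamma_n$.

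Motivated by Definition \ref{degreendefn}, for each $(n+1)$-tuple $\mathbf X = (X_1, \ldots, X_{n+1})$ of objects of $\C$, I mirror the formula for $\bot_{n+1}$ at the level of representables by setting
\[ B_{\mathbf X} \ := \ \underset{(S_1, \ldots, S_{n+1}) \in \P_0(\mathbf{2})^{\times(n+1)}}{\hocolim} Y_{\mathbf X}(S_1, \ldots, S_{n+1}) \ \in \ \Fun(\C, \s), \]
where $Y_{\mathbf X}(S_1, \ldots, S_{n+1}) = R^{\coprod_i \mathbf X_i(\varphi(S_1, \ldots, S_{n+1}))}$ whenever each $S_i \neq \{2\}$, and $Y_{\mathbf X}(S_1, \ldots, S_{n+1})$ is the initial functor $\varnothing$ otherwise. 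Let $\tau_{\mathbf X} \colon \varnothing \to B_{\mathbf X}$ be the canonical map from the initial object, and set $T(\Gamma_n) := \{\tau_{\mathbf X} \mid \mathbf X \in \C^{n+1}\}$, a set-sized collection by essential smallness of $\C$.

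By Lemma \ref{Yoneda} and Proposition \ref{evcotensorholim}, for a fibrant $F$ in $\Fun(\C,\D)$ one has
\[ F^{B_{\mathbf X}} \ \simeq \ \underset{(S_1, \ldots, S_{n+1})}{\holim} \ F\bigl(\textstyle\coprod_i \mathbf X_i(\varphi(S_1, \ldots, S_{n+1}))\bigr) \ \simeq \ \cross_{n+1} F(\mathbf X), \]
while $F^{\varnothing} = \star_{\D}$. Thus the cotensor square associated to $\tau_{\mathbf X}$ for a projective fibration $F \to G$ is a homotopy pullback in $\D$ if and only if $\cross_{n+1} F(\mathbf X) \to \cross_{n+1} G(\mathbf X)$ is a weak equivalence. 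Because $\cross_{n+1}$ is a finite homotopy limit and therefore preserves homotopy fibers, requiring this for every $\mathbf X$ is equivalent to $\cross_{n+1} K(\mathbf X) \simeq \star_{\D}$ for all $\mathbf X$, where $K = \hofiber(F \to G)$; that is, to $K$ being a degree $n$ functor.

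On the other hand, since $\D$ is stable, $\Gamma_n$---built from homotopy limits, homotopy colimits, and homotopy cofibers---also preserves homotopy fibers, so $\hofiber(\Gamma_n F \to \Gamma_n G) \simeq \Gamma_n K$. By Proposition \ref{properwe} together with naturality of $\gamma$, the $\Gamma_n$-fibration square for $F \to G$ is a homotopy pullback precisely when the induced map $\gamma_K \colon K \to \Gamma_n K$ on vertical fibers is a weak equivalence; and by Proposition \ref{QpresWE}(2) combined with Proposition \ref{fibrantdegreen}, this is equivalent to $K$ being (weakly equivalent to) a degree $n$ functor. Both conditions therefore coincide, verifying that $T(\Gamma_n)$ is a collection of test morphisms, and Theorem \ref{finalcofib} then yields cofibrant generation. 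The main obstacle is this chain of stability-driven equivalences---making sure that $\bot_{n+1}$, $\cross_{n+1}$, and $\Gamma_n$ all commute appropriately with homotopy fibers in $\D$, and that the underived evaluated cotensor $F^{B_{\mathbf X}}$ is correctly identified with the fibrant-replaced $\cross_{n+1} F(\mathbf X)$ of Definition \ref{degreendefn}; this last point is handled by a standard reduction to fibrations between fibrant objects using Corollary \ref{cotensorwefib} and the homotopy invariance of both pullback conditions.
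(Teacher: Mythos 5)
Your proposal is correct in substance, but it follows a genuinely different route from the paper's. You take test morphisms $\varnothing \to B_{\mathbf X}$ indexed by all $(n+1)$-tuples $\mathbf X$, so that the cotensor condition becomes ``$\cross_{n+1}F(\mathbf X)\to \cross_{n+1}G(\mathbf X)$ is a weak equivalence for all $\mathbf X$,'' and you then reduce both this and the $\Gamma_n$-fibration condition to the single statement that the levelwise homotopy fiber $K=\hofiber(F\to G)$ is degree $n$; this forces you to verify that $\cross_{n+1}$ and, more delicately, $\Gamma_n$ (built from $\bot_{n+1}$, the fat realization, and a homotopy cofiber) commute with homotopy fibers, which is where stability of $\D$ does the work. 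The paper instead uses test morphisms $\rho_A\colon \hocolim_{U}R^{\sqcup(A,U)}\to R^{\sqcup(A,\varnothing)}$ indexed by single objects $A$, so only the diagonal cross effects $\bot_{n+1}$ appear; one direction is proved directly from the defining homotopy cofiber sequence $|\bot_{n+1}^{\ast+1}F|\to F\to \Gamma_nF$ (via Lemma \ref{fibpullback} and Lemma \ref{techlemma}), and the converse by applying $\bot_{n+1}$ to the pullback square and using $\bot_{n+1}\Gamma_n\simeq \star_\D$ with Proposition \ref{properwe}; this avoids introducing $K$ and never needs $\Gamma_n$ to preserve homotopy fibers. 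Your approach buys a conceptually uniform statement (``the fiber is degree $n$'') and test morphisms that literally mirror Definition \ref{perp}, at the cost of more commutation checks and a larger indexing collection; the paper's is leaner. Two details to tighten: the equivalence ``square is a homotopy pullback iff the induced map of homotopy fibers is a weak equivalence'' is not Proposition \ref{properwe} --- its converse genuinely needs stability and properness (this is exactly the paper's Lemma \ref{fibpullback}), so you should isolate and prove that fact rather than cite \ref{properwe}; and the fibrant-replacement bookkeeping you defer (the underived evaluated cotensor versus the $\mathbb F$-replaced cross effects, for fibrations $F\to G$ with non-fibrant target) needs the composite-square comparison the paper carries out at the start of its proof of Lemma \ref{degreentestlemma}, since Corollary \ref{cotensorwefib} alone does not apply to the weak equivalence $F\to\mathbb F(F)$. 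Neither point is a gap in the strategy, just in the citations and the level of detail.
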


We want to prove this theorem by an application of Theorem \ref{finalcofib}, for which we need to define a collection of test morphisms for $\Gamma_n$.  Recall from \eqref{e:Ai} that for an object $A$ in $\mathcal{C}$, subset $U$ of $\mathbf{n+1}$, and element $i \in \mathbf{n+1}$, we defined 
\[ A_i(U):= \begin{cases}
A & i\notin U, \\
\ast_\mathcal{C} & i\in U.
\end{cases} \]
Using this definition, we define
\[ \sqcup(A,U):= \underset{i\in \mathbf{n+1}}\coprod A_i(U), \]
and note that for $U\subseteq V$ there is a natural map 
\[ \iota_{U,V} \colon \sqcup(A,U) \longrightarrow \sqcup(A,V)\]
induced by the unique morphism $A \rightarrow \ast_\C$ on the components indexed by $i \in V \setminus U$. 

\begin{definition} \label{degreentestdefn} 
For an object $A$ in $\C$, let $\rho_A$ be the morphism 
\[ \rho_A \colon \underset{U\subseteq \P_0(\mathbf{n+1})}{\hocolim}R^{\sqcup(A,U)} \rightarrow R^{\sqcup(A,\varnothing)} \]
in $\Fun(\C,\s)$ induced by the morphisms $\iota_{{\varnothing},U}$.  We denote by $T(\Gamma_n)$ the collection $\{\rho_A\}$ of these morphisms as $A$ ranges over all objects of $\C$.
\end{definition}

\begin{lemma} \label{degreentestlemma} 
For the projective model structure on $\Fun(\C,\D)$, the collection $T(\Gamma_n)$ of Definition \ref{degreentestdefn} is a collection of test morphisms for the natural transformation  $\gamma \colon id \Rightarrow \Gamma_n$. 
\end{lemma}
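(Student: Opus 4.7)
The plan is to translate the two sides of the test morphism condition in Definition \ref{testdefn} into concrete statements—one about $\bot_{n+1}F$, one about the fat realization $|\bot_{n+1}^{\ast+1}F|$—and then to show that these two statements are equivalent using the comonad structure of $\bot_{n+1}$ together with the stability of $\D$.

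First, I would apply Proposition \ref{evcotensorholim} and Lemma \ref{Yoneda} to simplify the evaluated cotensors appearing in Definition \ref{testdefn} for $\alpha=\rho_A$, obtaining $F^{R^{\sqcup(A,\varnothing)}}\cong F(\sqcup(A,\varnothing))$ and $F^{\hocolim_U R^{\sqcup(A,U)}}\cong \holim_{U\in\P_0(\mathbf{n+1})} F(\sqcup(A,U))$. By the description of $\bot_{n+1}$ as an iterated homotopy fiber recalled just after Definition \ref{perp}, the homotopy fiber of the map $F(\sqcup(A,\varnothing))\to \holim_U F(\sqcup(A,U))$ is precisely $\bot_{n+1}F(A)$, and similarly for $G$. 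Thus the cotensor square for $\rho_A$ is a homotopy pullback in $\D$ for every object $A$ of $\C$ if and only if the natural map $\bot_{n+1}F\to \bot_{n+1}G$ is a levelwise weak equivalence.

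Next I would unpack the other side of Definition \ref{testdefn}. Since $\Fun(\C,\D)$ carries the projective model structure, the square with horizontal maps $F\to \Gamma_nF$ and $G\to \Gamma_nG$ is a homotopy pullback precisely when it is a levelwise homotopy pullback in $\D$. Here stability becomes crucial: the cofiber sequence $|\bot_{n+1}^{\ast+1}\mathbb{F}(F)|\to \mathbb{F}(F)\to \Gamma_nF$ defining $\Gamma_nF$ is simultaneously a fiber sequence, so (after composing with the natural weak equivalence $F\to \mathbb{F}(F)$) the homotopy fiber of $F(A)\to \Gamma_nF(A)$ is naturally weakly equivalent to $|\bot_{n+1}^{\ast+1}F|(A)$. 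Consequently the $\Gamma_n$-pullback square is a levelwise homotopy pullback if and only if the induced map $|\bot_{n+1}^{\ast+1}F|\to |\bot_{n+1}^{\ast+1}G|$ is a levelwise weak equivalence.

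The remaining task is to show these two reformulations are equivalent. One direction is straightforward: if $\bot_{n+1}F\to \bot_{n+1}G$ is a levelwise weak equivalence, then so is each iterate $\bot_{n+1}^{k+1}F\to \bot_{n+1}^{k+1}G$, since $\bot_{n+1}$ preserves weak equivalences by its construction via the fibrant replacement $\mathbb{F}$, and hence so is $|\bot_{n+1}^{\ast+1}F|\to |\bot_{n+1}^{\ast+1}G|$ by the homotopy invariance of the fat realization. For the converse, I would apply $\bot_{n+1}$ to the map $|\bot_{n+1}^{\ast+1}F|\to |\bot_{n+1}^{\ast+1}G|$ and use, exactly as in the proof of Theorem \ref{degreenmodel}, the commutation of the finite homotopy limit $\bot_{n+1}$ with the fat realization in the stable model category $\D$ to identify $\bot_{n+1}|\bot_{n+1}^{\ast+1}F|\simeq |\bot_{n+1}^{\ast+2}F|$. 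An extra-degeneracy argument of the type appearing in \cite[5.5]{bjm} then gives $|\bot_{n+1}^{\ast+2}F|\simeq \bot_{n+1}F$, yielding the desired weak equivalence $\bot_{n+1}F\to \bot_{n+1}G$. The main technical obstacle will be coherently threading the fibrant and cofibrant replacements implicit in $\bot_{n+1}$ and in $|-|$ through these identifications; once that bookkeeping is pinned down, the argument reduces to a chain of already-known weak equivalences.
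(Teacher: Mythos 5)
Your proposal is correct and follows essentially the same route as the paper's proof: reduce via Proposition \ref{evcotensorholim} and Lemma \ref{Yoneda} to the square comparing $F(\sqcup(A,\varnothing))$ with $\holim_{U}F(\sqcup(A,U))$, identify the homotopy fibers of its horizontal maps with $\bot_{n+1}F(A)$ and $\bot_{n+1}G(A)$, and use stability (plus properness) to treat the cofiber sequence defining $\Gamma_n$ as a fiber sequence, so that both sides of the test-morphism condition become statements about weak equivalences of homotopy fibers. The only real difference is in the implication starting from the $\Gamma_n$-square being a homotopy pullback: the paper applies $\bot_{n+1}$ to that square and quotes $\bot_{n+1}\Gamma_nF\simeq\star_{\D}$, whereas you pass through the map $|\bot_{n+1}^{\ast+1}F|\to|\bot_{n+1}^{\ast+1}G|$ and invoke the commutation of $\bot_{n+1}$ with $|-|$ together with the extra-degeneracy equivalence of \cite{bjm} --- the same two facts from which the paper derives $\bot_{n+1}\Gamma_nF\simeq\star_{\D}$ in the proof of Theorem \ref{degreenmodel}, so this is a repackaging rather than a different argument.
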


To prove this lemma, we use the next two results.

\begin{lemma} \label{techlemma}
Let $\C$ be a subcategory of a model category that is closed under finite limits.  If $\X$ is an $n$-cube in $\C$, then 
\[ \ifiber(\X) \simeq \hofiber \left(\X(\varnothing) \rightarrow \holim_{\mathcal P_0(\mathbf n)} \X(U) \right). \]
\end{lemma}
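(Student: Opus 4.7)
The plan is to proceed by induction on $n$. For the base case $n=1$, a $1$-cube is a single morphism $\X(\varnothing) \to \X(\{1\})$, and $\P_0(\mathbf{1}) = \{\{1\}\}$, so $\holim_{\P_0(\mathbf{1})} \X(U) \simeq \X(\{1\})$; both sides of the claimed equivalence thus reduce to $\hofiber(\X(\varnothing) \to \X(\{1\}))$. For $n=2$, the iterated fiber is by definition the homotopy fiber of the map $\hofiber(\X_{\varnothing} \to \X_2) \to \hofiber(\X_1 \to \X_{12})$, while $\holim_{\P_0(\mathbf{2})} \X$ is the homotopy pullback of $\X_1 \to \X_{12} \leftarrow \X_2$; a standard diagram-pasting argument using Proposition \ref{pullbackcomposite} identifies these two, yielding the $n=2$ case.

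For the inductive step, I would assume the statement for $(n-1)$-cubes and view the given $n$-cube $\X$ as a morphism of $(n-1)$-cubes $\X_0 \to \X_1$ indexed over $\P(\mathbf{n-1})$, where $\X_0(U) := \X(U)$ and $\X_1(U) := \X(U \cup \{n\})$. Let $\mathcal F$ be the $(n-1)$-cube of pointwise homotopy fibers, $\mathcal F(U) := \hofiber(\X_0(U) \to \X_1(U))$. The recursive definition of the iterated fiber gives $\ifiber(\X) \simeq \ifiber(\mathcal F)$. Applying the inductive hypothesis to $\mathcal F$, and using that the homotopy fiber (as a finite homotopy limit) commutes with the homotopy limit over $\P_0(\mathbf{n-1})$, produces
\[ \ifiber(\X) \simeq \hofiber\left( \mathcal F(\varnothing) \to \hofiber\left( \holim_{\P_0(\mathbf{n-1})} \X_0 \to \holim_{\P_0(\mathbf{n-1})} \X_1 \right) \right). \]
Unpacking $\mathcal F(\varnothing) = \hofiber(\X(\varnothing) \to \X(\{n\}))$ and invoking the $n=2$ identity once more rewrites the right-hand side as $\hofiber(\X(\varnothing) \to P)$, where $P$ is the homotopy pullback of $\holim_{\P_0(\mathbf{n-1})} \X_0 \to \holim_{\P_0(\mathbf{n-1})} \X_1 \leftarrow \X(\{n\})$.

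It then remains to identify $P$ with $\holim_{\P_0(\mathbf{n})} \X(U)$. To do this, decompose $\P_0(\mathbf{n})$ according to whether a subset contains $n$: the full subcategory on non-empty subsets disjoint from $\{n\}$ is isomorphic to $\P_0(\mathbf{n-1})$, and the full subcategory on subsets containing $n$ is isomorphic to $\P(\mathbf{n-1})$ via $V \mapsto V \setminus \{n\}$. Since $\varnothing$ is initial in $\P(\mathbf{n-1})$, the homotopy limit over this second piece is equivalent to $\X(\{n\})$. A Fubini-style decomposition of the homotopy limit $\holim_{\P_0(\mathbf{n})} \X$ along the Grothendieck fibration $\P_0(\mathbf{n}) \to [1]$ recording whether a subset contains $n$ then expresses $\holim_{\P_0(\mathbf{n})} \X$ as exactly the homotopy pullback $P$.

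The main obstacle will be this final decomposition step: justifying the Fubini-style formula rigorously requires either a careful direct computation from the Bousfield--Kan formula for homotopy limits or invocation of a suitable general theorem about homotopy limits over Grothendieck constructions. All earlier steps reduce to the inductive hypothesis, the commutation of finite homotopy limits among themselves, and the pasting behavior of homotopy pullbacks encoded in Proposition \ref{pullbackcomposite}.
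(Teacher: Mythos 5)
Your proof is correct and is essentially the argument the paper appeals to: the paper gives no independent proof, citing \cite[3.4.3, 5.5.4]{mv} instead, and the argument there is exactly your induction, with the final ``Fubini'' identification of $\holim_{\P_0(\mathbf{n})}\X$ with the homotopy pullback of $\holim_{\P_0(\mathbf{n-1})}\X_0 \rightarrow \holim_{\P_0(\mathbf{n-1})}\X_1 \leftarrow \X(\{n\})$ being a standard punctured-cube decomposition also established in \cite{mv}, so the step you flag as the main obstacle is available off the shelf rather than a gap. (One cosmetic point: $\P_0(\mathbf{n})\rightarrow[1]$ is a Grothendieck opfibration rather than a fibration, but the lax-limit decomposition you invoke is the correct one.)
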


This lemma was proved in the context of spaces in \cite[3.4.3]{mv} for 2-cubes and \cite[5.5.4]{mv} for general $n$-cubes; the same line of argument holds in this more general setting. 

The proof of the following lemma is a straightforward exercise, using Proposition \ref{pullbackcomposite} and its dual.  

\begin{lemma} \label{fibpullback}
For a commutative square
\vspace{-2mm}
\begin{equation}\label{abcd}
\begin{tikzpicture}[baseline=(current bounding box.center), node distance=1.8cm, auto, scale=1]

\node (A) {$A$};
\node (B) [right of=A] {$B$};
\node (C) [node distance=1.4cm, below of=A] {$C$};
\node (D) [node distance=1.4cm, below of=B] {$D$,};

\draw[->] (A) to node {$\alpha$} (B);
\draw[->] (A) to node {} (C);
\draw[->] (C) to node [swap] {$\gamma$} (D);
\draw[->] (B) to node {} (D);

\end{tikzpicture}
\end{equation}
in a pointed right proper model category $\D$, the induced map of homotopy fibers 
\[ \hofiber(\alpha)\rightarrow \hofiber(\gamma) \]
is a weak equivalence if  the square \eqref{abcd} is a homotopy pullback square.  If $\D$ is stable and proper, the converse is true as well.
\end{lemma}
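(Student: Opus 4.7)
The plan is to use a stacked diagram to relate the square \eqref{abcd} to the induced map of homotopy fibers, applying Proposition \ref{pullbackcomposite} in the forward direction and its homotopy-pushout dual for the converse. Consider the vertical stack of two squares in which the upper square exhibits $\hofiber(\alpha)$ as the homotopy pullback of the cospan $\ast \rightarrow B \leftarrow A$, with top row $\hofiber(\alpha) \rightarrow \ast$ and middle row $A \xrightarrow{\alpha} B$, and in which the lower square is \eqref{abcd}, sharing the row $A \xrightarrow{\alpha} B$. The outer composite rectangle has top row $\hofiber(\alpha) \rightarrow \ast$ and bottom row $C \xrightarrow{\gamma} D$, with vertical maps obtained by composition; it is a homotopy pullback exactly when the induced map $\hofiber(\alpha) \rightarrow \hofiber(\gamma)$ is a weak equivalence, since $\hofiber(\gamma)$ is by definition the homotopy limit of $\ast \rightarrow D \leftarrow C$.

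For the forward direction, assume that \eqref{abcd} is a homotopy pullback. Since the upper square is a homotopy pullback by the definition of $\hofiber(\alpha)$, Proposition \ref{pullbackcomposite} applied with \eqref{abcd} as the hypothesized homotopy pullback yields that the outer rectangle is a homotopy pullback, whence $\hofiber(\alpha) \rightarrow \hofiber(\gamma)$ is a weak equivalence.

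For the converse in the stable proper setting, recall that in a stable model category a square is a homotopy pullback if and only if it is a homotopy pushout. Under the hypothesis that $\hofiber(\alpha) \rightarrow \hofiber(\gamma)$ is a weak equivalence, the outer rectangle is a homotopy pullback and hence, by stability, a homotopy pushout; similarly the upper defining square is a homotopy pushout. Applying the dual of Proposition \ref{pullbackcomposite}, namely the two-out-of-three property for homotopy pushout squares in a left proper model category, with the upper square now as the hypothesized homotopy pushout, then yields that \eqref{abcd} is a homotopy pushout, and therefore a homotopy pullback by stability once more.

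The main point to be careful about, and the reason stability and left properness are needed only for the converse, is that Proposition \ref{pullbackcomposite} is asymmetric: it requires a specific one of the three squares in the composition to be the hypothesized homotopy pullback, and the other direction of the two-out-of-three fails in general. In our fiber stack the upper square is always a homotopy pullback, which immediately gives the forward direction, but for the converse the defining square must be placed into the role allowed by the hypothesis; this forces the dualization to homotopy pushouts and, in turn, the use of stability to convert the hypothesis on homotopy fibers and the conclusion on \eqref{abcd} back into pullback terms.
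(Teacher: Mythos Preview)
Your proof is correct and follows exactly the approach the paper indicates: the paper merely states that the lemma is ``a straightforward exercise, using Proposition~\ref{pullbackcomposite} and its dual,'' and your argument carries this out in detail, including the correct identification of why the asymmetry in Proposition~\ref{pullbackcomposite} forces the dualization to homotopy pushouts (and hence the stability and left properness hypotheses) for the converse.
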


\begin{proof}[Proof of Lemma \ref{degreentestlemma}] 
Let $F\rightarrow G$ be a fibration in $\Fun(\C,\D)$.  By an argument similar to the one used to start the proof of Proposition \ref{n-exctest},  it suffices to show that 
\vspace{-2mm}
\begin{equation} \label{Gammansquare}
\begin{tikzpicture}[baseline=(current bounding box.center), node distance=2cm, auto, scale=1]

\node (A) {$F$};
\node (B) [right of=A] {$\Gamma_nF$};
\node (C) [node distance=1.4cm, below of=A] {$G$};
\node (D) [node distance=1.4cm, below of=B] {$\Gamma_n G$};

\draw[->] (A) to node {$\gamma_F$} (B);
\draw[->] (A) to node {} (C);
\draw[->] (C) to node [swap] {$\gamma_G$} (D);
\draw[->] (B) to node {} (D);

\end{tikzpicture}
\end{equation}
is a homotopy pullback if and only if 
the diagram 
\vspace{-1mm}
\begin{equation}\label{Yonedasquare}
\begin{tikzpicture}[baseline=(current bounding box.center), node distance=4cm, auto, scale=1]

\node (A) {$F(\sqcup(A,\varnothing))$};
\node (B) [right of=A] {$\underset{U\subseteq\P_0(\mathbf{n+1})}{\holim}F(\sqcup(A,U))$};
\node (C) [node distance=1.5cm, below of=A] {$F(\sqcup(A,\varnothing))$};
\node (D) [node distance=1.5cm, below of=B] {$\underset{U\subseteq\P_0(\mathbf{n+1})}{\holim}G(\sqcup(A,U))$};

\draw[->] (A) to node {} (B);
\draw[->] (A) to node {} (C);
\draw[->] (C) to node [swap] {} (D);
\draw[->] (B) to node {} (D);

\end{tikzpicture}
\end{equation}
is a homotopy pullback for all objects $A$ in $\C.$

We can write \eqref{Gammansquare} as the composite
\vspace{-2mm}
\[ \begin{tikzpicture}[node distance=2.2cm, auto]

\node (A) {$F$};
\node (B) [right of=A] {${\mathbb F}(F)$};
\node (E) [right of=B] {$\Gamma_n F$};
\node (C) [node distance=1.4cm, below of=A] {$G$};
\node (D) [right of=C] {${\mathbb F}(G)$};
\node (F) [right of=D] {$\Gamma_n G$};

\draw[->] (A) to node {$\simeq$} (B);
\draw[->] (B) to node {} (E);
\draw[->] (C) to node  {$\simeq$} (D);
\draw[->] (D) to node {} (F);
\draw[->] (A) to node {} (C);
\draw[->] (B) to node {} (D);
\draw[->] (E) to node {} (F);

\end{tikzpicture} \] 
by  definition of the natural transformation $\gamma:\id_{\Fun(\C,\D)}\rightarrow\Gamma_n$.  Using the dual of Proposition \ref{pullbackcomposite} and the fact that $\D$ is stable, we see that the right-hand square is a homotopy pullback if and only if the outer square is a homotopy pullback.  Similarly, \eqref{Yonedasquare} is a homotopy pullback if and only if the corresponding square with fibrant replacements of $F$ and $G$ on the left is a homotopy pullback square.  Hence, we can restrict to the case where we use fibrant replacements of $F$ and $G$ and  for simplicity, we suppress the fibrant replacement notation $\mathbb F$ for the remainder of the proof.  

By Lemma \ref{techlemma}, the homotopy fibers of the top and bottom horizontal arrows in \eqref{Yonedasquare} are $\bot_{n+1}F(A)$ and $\bot_{n+1}G(A)$, respectively.  Then by Lemma \ref{fibpullback}, the diagram \eqref{Yonedasquare} is a homotopy pullback if and only if the induced  map of homotopy fibers $\bot_{n+1}F(A)\rightarrow \bot_{n+1}G(A)$ is a weak equivalence.

If \eqref{Yonedasquare} is a homotopy pullback for all objects $A$ in $\C$, it follows that 
$\vert \bot_{n+1}^{\ast+1}F\vert\rightarrow \vert \bot_{n+1}^{\ast+1}G\vert$ is a weak equivalence in $\Fun(\C,\D)$. Consider the diagram
\vspace{-2mm}
\begin{center}
\begin{tikzpicture}[node distance=2.5cm, auto]

\node (A) {$\vert\bot_{n+1}^{\ast +1}F\vert$};
\node (B) [right of=A] {$F$};
\node (E) [right of=B] {$\Gamma_n F$};
\node (C) [node distance=1.5cm, below of=A] {$\vert\bot_{n+1}^{\ast +1}G\vert$};
\node (D) [right of=C] {$G$};
\node (F) [right of=D] {$\Gamma_n G$,};

\draw[->] (A) to node {} (B);
\draw[->] (B) to node {} (E);
\draw[->] (C) to node {} (D);
\draw[->] (D) to node {} (F);
\draw[->] (A) to node {} (C);
\draw[->] (B) to node {} (D);
\draw[->] (E) to node {} (F);

\end{tikzpicture}
\end{center}
where the top and bottom rows are the homotopy cofiber sequences defining $\Gamma_nF$ and $\Gamma_nG$. respectively. Since $\D$ is stable, the rows are also homotopy fiber sequences, and the right-hand square, which is exactly \eqref{Gammansquare}, is a homotopy pullback by Lemma \ref{fibpullback}.

Conversely, if \eqref{Gammansquare} is a homotopy pullback, then the square
\vspace{-2mm}
\begin{center}
\begin{tikzpicture}[node distance=3cm, auto]

\node (A) {$\bot_{n+1}F$};
\node (E) [right of=A] {$\bot_{n+1}\Gamma_n F$};
\node (C) [node distance=1.5cm, below of=A] {$\bot_{n+1}G$};
\node (F) [right of=C] {$\bot_{n+1}\Gamma_n G$.};

\draw[->] (A) to node {} (E);
\draw[->] (A) to node {} (C);
\draw[->] (C) to node {} (F);
\draw[->] (E) to node {} (F);

\end{tikzpicture}
\end{center}
is also a homotopy pullback as $\bot_{n+1}$ preserves homotopy pullbacks.  As noted in the proof of Theorem \ref{degreenmodel}, $\bot_{n+1}\Gamma_nF\simeq \ast\simeq \bot_{n+1}\Gamma_nG$, so $\bot_{n+1}F \rightarrow \bot_{n+1}G$ is a weak equivalence by Proposition \ref{properwe}, and  \eqref{Yonedasquare} is a homotopy pullback for all objects $A$ in $\C$.  
\end{proof}

\begin{proof}[Proof of Theorem \ref{degncofgen}]
The proof of Theorem \ref{degreenmodel} establishes that $\Gamma_n$ is a Bousfield endofunctor on $\Fun(\C,\D)$ with the projective model structure.  Proposition \ref{Qfibrations} guarantees that the fibrations in $\Fun(\C,\D)_{\Gamma_n}$ are also projective fibrations.  Hence, we can apply Theorem \ref{finalcofib}, using the collection of test morphisms for $\Gamma_n$ that we established in Lemma \ref{degreentestlemma}.
\end{proof}


\begin{thebibliography}{99} 

\bibitem{am}
G.\ Arone and M.\ Mahowald, {The Goodwillie tower of the identity functor and the unstable periodic homotopy of spheres}, \emph{Invent.\ Math.\ } 135 (1999), no.\ 3, 743-788.

\bibitem{behrens}
M.\ Behrens, The Goodwillie tower and the EHP sequence, \emph{Mem.\ Amer.\ Math.\ Soc.} 218 (2012), no.\ 1026.

\bibitem{proc}
Lauren Bandklayder, Julia E.\ Bergner, Rhiannon Griffiths, Brenda Johnson, and Rekha Santhanam, Enriched functor categories for functor calculus, \emph{Topology Appl.} 316 (2022), Paper No.\ 108099, 33 pp.

\bibitem{be1}
David Barnes and Rosona Eldred, Capturing Goodwillie's derivative, \emph{J.\ Pure Appl.\ Algebra} 220 (2016), no.\ 1, 197--222.

\bibitem{be2}
David Barnes and Rosona Eldred, Comparing the orthogonal and homotopy functor calculi, \emph{J.\ Pure Appl.\ Algebra} 220 (2016), no.\ 11, 3650--3675.

\bibitem{bo}
David Barnes and Peter Oman, Model categories for orthogonal calculus,  \emph{Algebr.\ Geom.\ Topol.} 13 (2013), no.\ 2, 959--999.

\bibitem{bjm}
Kristine Bauer, Brenda Johnson, and Randy McCarthy, Cross effects and calculus in an unbased setting. With an appendix by Rosona Eldred. \emph{Trans.\ Amer.\ Math.\ Soc.} 367 (2015), no.\ 9, 6671--6718.

\bibitem{bjort}
K.\ Bauer, B.\ Johnson, C.\ Osborne, E.\ Riehl, A.\ Tebbe, Directional derivatives and higher order chain rules for abelian functor calculus, \emph{Topology Appl.} 235 (2018), 375-427.

\bibitem{bcr}
Georg Biedermann, Boris Chorny, and Oliver R\"ondigs, Calculus of functors and model categories. \emph{Adv.\ Math.} 214 (2007), no.\ 1, 92--115.

\bibitem{br}
Georg Biedermann and Oliver R\"ondigs, Calculus of functors and model categories, II. \emph{Algebr.\ Geom.\ Topol.} 14 (2014), no.\ 5, 2853--2913.

\bibitem{bousfield}
A.K.\ Bousfield, On the telescopic homotopy theory of spaces, \emph{Trans.\ Amer.\ Math.\ Soc.} 353 (2001), no.\ 6, 2391--2426.

\bibitem{bf}
A.K.\ Bousfield and E.M.\ Friedlander, Homotopy theory of $\Gamma$-spaces, spectra, and bisimplicial sets, \emph{Lecture Notes in Math.}, 658 (1978), Springer-Verlag, Berlin-New York, 80-130. 

\bibitem{day2} 
Brian Day, On closed categories of functors II, \emph{Category Seminar. Lecture Notes in Mathematics}, vol 420  (1974), Springer, Berlin, Heidelberg, 20 -- 54.

\bibitem{dp}
A.\ Dold and D.\ Puppe, Homologie nicht-additiver funktoren. Anwendungen. \emph{Ann.\ Inst.\ Fourier (Grenoble}) 11 (1961), 202-312.

\bibitem{dundasmccarthy}
B.I.\ Dundas and R.\ McCarthy, Stable $K$-theory and topological Hochschild homology. \emph{Ann.\ of Math.} (2) 140 (1994), no.\ 3, 685-701.

\bibitem{dro}
B.I.\ Dundas, O.\ R\"ondigs and P.A.\ \O stv\ae r, Enriched  functors and stable homotopy theory. \emph{Doc. Math.} 8 (2003), 409-488.

\bibitem{ds}
W.G.\ Dwyer and J.\ Spalinski, Homotopy theories and model categories, in \emph{Handbook of Algebraic Topology}, Elsevier, 1995.

\bibitem{em1}
S.\ Eilenberg and S.\ Mac Lane, Homology theories for multiplicative systems. \emph{Trans.\ Amer.\ Math.\ Soc.} 71 (1951), 294-330.

\bibitem{em2}
S.\ Eilenberg and S.\ Mac Lane, On the groups $H(\Pi, n)$. II. Methods of computation. \emph{Ann.\ of Math. (2)} 60 (1954), 49-139.

\bibitem{gj}
P.G.\ Goerss and J.F.\ Jardine,  \emph{Simplicial Homotopy Theory, Progress in Math}, vol. 174, Birkhauser, 1999.

\bibitem{goodwillie2}
Thomas G.\ Goodwillie, Calculus II. Analytic functors, \emph{$K$-theory} 5(1991/92), no.\ 4, 295-332.

\bibitem{goodwillie3} 
T.\ G.\ Goodwillie, Calculus III, \emph{Geom.\ Topol.} 7(2003), no.\ 2, 645-711.

\bibitem{gw}
Thomas G.\ Goodwillie and Michael Weiss, Embeddings from the point of view of immersion theory. II, \emph{Geom.\ Topol.} 3 (1999), 103--118. 

\bibitem{gm}
B.\ J.\ Guillou and J.\ P.\ May, Enriched model categories and presheaf categories, \emph{New York J.\ Math.} 26 (2020), 92--115.

\bibitem{heuts}
Gijs Heuts, Lie algebras and $v_n$-periodic spaces, \emph{Ann.\ of Math.(2)} 193 (2021), no.\ 1, 223--301.

\bibitem{hirschhorn}
Philip S.\ Hirschhorn, \emph{Model Categories and Their Localizations, Mathematical Surveys and Monographs 99}, AMS, 2003.

\bibitem{hirschnotes}
Philip S.\ Hirschhorn, Notes on homotopy colimits and homotopy limits, available at https://klein.mit.edu/\verb1~1psh/notes/hocolim.pdf

\bibitem{hovey}
Mark Hovey, \emph{Model Categories, Mathematical Surveys and Monographs, 63}. AMS 1999.

\bibitem{jm1}
B.\ Johnson and R.\ McCarthy, Linearization, Dold-Puppe stabilization, and Mac Lane's $Q$-construction, \emph{Trans.\ Amer.\ Math.\ Soc.\ } 350 (1998), no.\ 4, 1555-1593.  

\bibitem{jm2}
B.\ Johnson and R.\ McCarthy, Deriving calculus with cotriples, \emph{Trans.\ Amer.\ Math.\ Soc.} 356 (2004), no.\ 2, 757-803.

\bibitem{km}
M.R.\ Kantorovitz and R.\ McCarthy, The Taylor towers for rational algebraic $K$-theory and Hochschild homology, \emph{Homology Homotopy Appl.} 4 (2002), no.\ 1, 191-212.

\bibitem{kelly}
G.\ M.\ Kelly, Basic concepts of enriched category theory, \emph{Repr.\ Theory Appl.\ Categ.\ } 10 (2005). 

\bibitem{kuhn1}
N.\ J.\ Kuhn, Goodwillie towers and chromatic homotopy: an overview. \emph{Proceedings of the Nishida Fest (Kinosaki 2003)}, 245-279, Geom.\ Topol.\ Monogr., 10, \emph{Geom.\ Topol.\ Publ.}, Coventry, 2007.

\bibitem{kuhn2}
N.\ J.\ Kuhn, The Whitehead conjecture, the tower of $S^1$ conjecture, and Hecke algebras of type $A$. \emph{J.\ Topol.} 8 (2015), no.\ 1, 118-146.

\bibitem{ltv}
Pascal Lambrechts, Victor Turchin, and Ismar Voli\'c, The rational homology of spaces of long knots in codimension $>2$, \emph{Geom.\ Topol.\ } 14 (2010), no.\ 4, 2151-2187.

\bibitem{lurie}
Jacob Lurie, $(\infty, 2)$-categories and Goodwillie calculus, preprint available at math.CT/09050462.

\bibitem{mccarthy}
R.\ McCarthy, Relative algebraic $K$-theory and topological cyclic homology. \emph{Acta Math.} 179 (1997), no.\ 2, 197-222.

\bibitem{mv}
Brian A. Munson and Ismar Voli\'c, \emph{Cubical Homotopy Theory. New Mathematical Monographs}, 25. Cambridge University Press, Cambridge, 2015.

\bibitem{pereira}
Luis Pereira, A general context for Goodwillie calculus, preprint available at math.AT/1301.2832.

\bibitem{rw} 
Rui Reis and Michael Weiss, Rational Pontryagin classes and functor calculus, \emph{J.\ Eur.\ Math.\ Soc.\ (JEMS)} 18 (2016), no. 8, 1769-1811.

\bibitem{renaudin}
Olivier Renaudin, Localisation homotopique et tour de Taylor pour une cat\'egorie ab\'elienne, \emph{Trans.\ Amer.\ Math.\ Soc.} 354 (2002), no.\ 1, 75-–89.

\bibitem{richter}
Birgit Richter, Taylor towers for $\Gamma$-modules, \emph{Ann.\ Inst.\ Fourier} 51 (2001), no.\ 4, 995-–1023.

\bibitem{riehl2}
Emily Riehl, \emph{Categorical Homotopy Theory}, New Mathematical Monographs, 24. Cambridge University Press, Cambridge, 2014.

\bibitem{shulman}
Michael Shulman, \emph{Homotopy limits and colimits and enriched homotopy theory}, \url{https://arxiv.org/abs/math/0610194}, 2009.

\bibitem{sinha}
Dev Sinha, The topology of spaces of knots: cosimplicial models, \emph{Amer.\ J.\ Math.} 131 (2009), no.\ 4, 945-980. 

\bibitem{taggartcomp}
Niall Taggart, Comparing the orthogonal and unitary functor calculi, \emph{Homology Homotopy Appl.} 23 (2021), no.\ 2, 227-256.

\bibitem{taggart}
Niall Taggart, Unitary functor calculus with reality, \emph{Glasg.\ Math.\ J.} 64 (2022), no.\ 1, 197-230. 

\bibitem{quillen}
Daniel Quillen, \emph{Homotopical Algebra, Lecture Notes in Math 43}, Springer-Verlag, 1967.

\bibitem{stanculescu}
Alexandru E.\ Stanculescu, Note on a theorem of Bousfield and Friedlander, \emph{Topology Appl.} 155 (2008), no.\ 13, 1434-–1438.

\bibitem{weibel}
Charles Weibel, \emph{An Introduction to Homological Algebra}, Cambridge University Press, 1994.

\bibitem{weissemb}
Michael Weiss,  Embeddings from the point of view of immersion theory. I., \emph{Geom.\ Topol.} 3 (1999), 67--101. 

\bibitem{weissorth}
Michael Weiss, Orthogonal calculus, \emph{Trans.\ Amer.\ Math.\ Soc.} 347 (1995), no.\ 10, 3743--3796. 
\end{thebibliography}
\end{document}